\renewcommand\epsilon\varepsilon
\renewcommand\phi\varphi
\renewcommand\geq\geqslant
\renewcommand\leq\leqslant
\renewcommand\ln\log
\newcommand\RR{\mathbb{R}}
\newcommand\ab\allowbreak
\newcommand{\ent}{\mathrm{Ent}}
\newcommand{\var}{\mathrm{Var}}
\newcommand{\tQ}{\widetilde{Q}}
\newtheorem{The}[equation]{Theorem}
\newtheorem{Lem}[equation]{Lemma}
\newtheorem{Pro}[equation]{Proposition}
\newtheorem{Cor}[equation]{Corollary}
\newtheorem{Def}[equation]{Definition}
\newtheorem{Rem}[equation]{Remark}
\newtheorem{Claim}[equation]{Claim}
\numberwithin{equation}{section}
\title{Hamilton-Jacobi equations on graph and applications}
\author{Yan SHU\footnote{Mod\'elisation al\'eatoire de Paris Ouest Nanterre La D\'efense(MODAL'X), Email: yshu@u-paris10.fr}}
\date{\today}
\begin{document}
\maketitle
\begin{abstract}
This paper introduces a notion of gradient and an infimal-convolution operator that extend properties of solutions of Hamilton Jacobi equations to more general spaces, in particular to graphs. As a main application, the hypercontractivity of this class of infimal-convolution operators is connected to some discrete version of the log-Sobolev inequality and to a discrete version of Talagrand's transport inequality.
\end{abstract}
key words: Hamilton-Jacobi equations; Weak-transport entropy inequalities; Modified Log-Sob inequalities on graphs
\section{Introduction}
The following Hamilton-Jacobi initial value problem
\begin{equation}\label{e}
\begin{cases}
\frac{\partial v(x,t)}{\partial t}+\frac{1}{2}|\nabla_x v(x,t)|_x^2=0 & (x,t) \in
 M \times(0,\infty)\\v(x,0)=f(x) & x \in M,
\end{cases}
\end{equation}
where $(M,g)$ is a smooth Riemannian manifold and $|\,\cdot \,|_x$ is the norm on $T_xM$ associated to the metric $g$ at point $x$, together with its explicit solution, given by the celebrated Hopf-Lax formula,
\begin{equation} \label{hl}
Q_tf(x)=\inf_{y\in M}\left\{f(y)+\frac{1}{2t}d(x,y)^2\right\}, \qquad \quad  t>0, x\in M
\end{equation}
where $d$ denotes the geodesic distance on $M$ (with \textit{e.g.}\ $f \colon M \to \RR$ Lipschitz) are very classical and have a lot of applications in Analysis, Physics and Probability Theory (let us mention applications in large deviations theory, statistical mechanics, mean field games, optimal control, optimal transport, functional inequalities, they also have deep connections with geometry (Ricci curvature) etc.). We refer to the books by Evans \cite{evans}, Barbu and Da Prato \cite{barbu}  and Villani \cite{Villani} for an introduction and for related topics.
\medskip

An important effort has been made recently to generalize such a classical theory to more general situations, for example by replacing the Riemannian manifold $M$ by a general metric space
(see \textit{e.g.} \cite{ambrosio,GRS14}). We refer to the introduction of \cite{gangbo} for a review of the literature and in particular on the various notions of viscosity solution introduced in the metric spaces setting. One non trivial issue is to give a proper definition of gradient in order for Equation  \eqref{e} to make sense, and, with that respect, an important ingredient is that the space needs to be continuous. In particular, the known theories fail to directly generalize to discrete structures such as graphs.
\medskip

The aim of the present paper is precisely to introduce a notion of gradient and to use an inf-convolution operator that extend, in some sense, \eqref{e} and \eqref{hl}, to graphs, with a specific focus for applications on  functional inequalities. It turns out that our approach, originally devised to deal with the graph setting, works also for general metric spaces.
\medskip

We introduce now the notion of gradient and the inf-convolution operator we shall deal with through the paper.  Let $(X,d)$ be a complete, separable metric space such that balls are compact.
\medskip

The (length of the) gradient we shall consider is defined as
$$
|\widetilde{\nabla}f|(x):=\sup_{y \in X}\frac{[f(y)-f(x)]_-}{d(x,y)}
$$
where $[a]_-=\max(0,-a)$ is the negative part of $a\in \RR$ (by convention $0/0=0$). We observe that, in discrete setting, one usually deals with quantity involving $|f(y)-f(x)|$, with $y$ a neighbour of $x$ (a property we denote by $x \sim y$), which is usually less than $|\widetilde{\nabla}f|(x)$. However, if $f$ is assumed to be a convex function, then $|\widetilde{\nabla}f|(x)=\sup_{y \sim x}[f(y)-f(x)]_-$. Also, in $\RR^n$ equipped with the usual Euclidean distance, if $f$ is a smooth convex function, $|\widetilde{\nabla}f|$ coincides with the usual length of the gradient $|\nabla f|(x)=\sqrt{\sum_i \partial_i f^2}(x)$ (and it always holds $|\widetilde{\nabla}f| \geq |\nabla f|$).
\medskip

As for the inf-convolution operator, we observe that there is at least one important difference with respect to the continuous setting. Indeed, as we shall explain in detail later, under very mild assumptions, there is no hope of finding a family of mappings $(D_t)_{t > 0}$ such that  $Q_tf(x):=\inf_{y\in V}\{f(y)+D_t(x,y)\}$
(where $x,y$ belong to the vertex set $V$ of a graph $G=(V,E)$) satisfies the usual semi-group property
$Q_{t+s}=Q_t( Q_s)$.
\medskip

To overcome this problem, we may use the following weak inf-convolution operator,
$$
\tQ_tf(x)=\inf_{p\in \mathcal{P}(X)} \left\{ \int  f\,dp + \frac{1}{2t}\left(\int  d(x,y)\,p(dy)\right)^2\right\},\qquad t>0,
$$
defined for all bounded measurable functions $f$, where $\mathcal{P}(X)$ denotes the set of Borel probability measures on $X$. This weak inf-convolution operator is naturally linked (via some variant of the Kantorovich duality theorem proved in \cite{GRST14}) to the following weak optimal transport-cost introduced by Marton \cite{Mar96b}:
\begin{equation} \label{weakt}
\widetilde T_2(\nu|\mu) := \inf \left\{ \int  \left( \int  d(x,y)\,p_x(dy)\right)^2\,\mu(dx) \right\},
\end{equation}
where $\mu, \nu$ are probability measures on $X$ and where the infimum is running over all couplings $\pi(dx,dy)=p_x(dy)\mu(dx)$ of $\mu$ and $\nu$ (\textit{i.e.}\ $\pi$ is a probability measure on $X \times X$ with first marginal $\mu$ and second marginal $\nu$ and $(p_x)_{x \in X}$ denotes the regular conditional probability of the second marginal knowing the first). Note that integrals stand for sums in the discrete setting.
Such a transport-cost appeared in the literature as an intermediate tool to obtain concentration results, see Marton \cite{Mar86,Mar96a,Mar96b}, Dembo \cite{Dem97}, Samson \cite{Sam00,Sam03,Sam07}, Wintemberger \cite{Win13}, and as a discrete counterpart of the usual $W_2$-Kantorovitch-Wasserstein distance in some
displacement convexity property of the entropy along interpolating paths on graphs, see Gozlan-Roberto-Samson-Tetali \cite{GRST12,GRST14}.
\medskip

Our main theorem is the following counterpart of \eqref{e}.

\begin{The}\label{thm:main}
Let $f \colon X \rightarrow \RR$ be a lower semi-continuous function bounded from below.
Then, for all $x \in X$, it holds
$$
\begin{cases}
\frac{\partial}{\partial t}\tQ_tf(x)+\frac{1}{2} |\widetilde{\nabla}\tQ_tf|^2(x) \leq 0 & \qquad \forall t>0 \\
\frac{\partial}{\partial t}\tQ_tf(x)|_{t=0}+\frac{1}{2} |\widetilde{\nabla}f|^2(x) = 0
& \qquad t=0 .
\end{cases}
$$
\end{The}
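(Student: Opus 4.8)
The plan is to fix $x\in X$, abbreviate $I_x(p)\coloneq\int d(x,y)\,p(dy)$ so that $\tQ_tf(x)=\inf_{p\in\mathcal P(X)}\bigl\{\int f\,dp+\frac{1}{2t}I_x(p)^2\bigr\}$, and to treat the two assertions separately after a soft preliminary step. In that step I would first check that the infimum is attained: since $f$ is bounded below, any minimising sequence $(p_n)$ has $I_x(p_n)$ bounded, hence is tight (Markov's inequality plus compactness of balls), and weak lower semicontinuity of $p\mapsto\int f\,dp$ (valid for $f$ lower semi-continuous and bounded below) and of $p\mapsto I_x(p)$ produce an optimiser $p_t$ with $I_x(p_t)<\infty$. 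I would also record that $s\mapsto\tQ_{1/(2s)}f(x)=\inf_p\{\int f\,dp+s\,I_x(p)^2\}$ is an infimum of affine functions of $s$, hence concave and nondecreasing on $(0,\infty)$; this makes $t\mapsto\tQ_tf(x)$ locally Lipschitz, differentiable off an at most countable set, and equipped with a right derivative everywhere. Finally $\tQ_tf(x)\le f(x)$ (test $p=\delta_x$), and the tightness bound together with lower semicontinuity of $f$ gives $\tQ_tf(x)\to f(x)$ as $t\to0^+$.

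For the inequality at $t>0$ I would combine two estimates. First, using $p_t$ itself as a competitor at times $t'>t$ and letting $t'\downarrow t$,
\[
\frac{\partial^{+}}{\partial t}\tQ_tf(x)\ \le\ -\frac{I_x(p_t)^2}{2t^2}.
\]
Second — and this is the heart of the matter — I would exploit that $p\mapsto\int f\,dp+\frac{1}{2t}I_x(p)^2$ is \emph{convex} on $\mathcal P(X)$ and $p_t$ minimises it: the first variation along $(1-\varepsilon)p_t+\varepsilon q$ gives $\int f\,dq+\frac{I_x(p_t)}{t}I_x(q)\ge\tQ_tf(x)+\frac{I_x(p_t)^2}{2t}$ for every $q\in\mathcal P(X)$, whence, taking $q=\delta_y$, the pointwise bound $f(y)\ge\tQ_tf(x)+\frac{I_x(p_t)^2}{2t}-\frac{I_x(p_t)}{t}d(x,y)$. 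For any $z\in X$ and any $q\in\mathcal P(X)$, integrating this bound against $q$, using $I_x(q)\le d(x,z)+I_z(q)$ and completing the square in $I_z(q)$,
\[
\int f\,dq+\frac{1}{2t}I_z(q)^2\ \ge\ \tQ_tf(x)-\frac{I_x(p_t)}{t}\,d(x,z)+\frac{1}{2t}\bigl(I_x(p_t)-I_z(q)\bigr)^2\ \ge\ \tQ_tf(x)-\frac{I_x(p_t)}{t}\,d(x,z),
\]
so that $\tQ_tf(z)\ge\tQ_tf(x)-\frac{I_x(p_t)}{t}d(x,z)$, i.e.\ $|\widetilde{\nabla}\tQ_tf|(x)\le I_x(p_t)/t$. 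Adding the two displayed estimates closes this case, since $-\frac{I_x(p_t)^2}{2t^2}+\frac12\bigl(I_x(p_t)/t\bigr)^2=0$; at the countably many $t$ where $\tQ_tf(x)$ fails to be differentiable the same computation still bounds the right derivative.

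For the identity at $t=0$ I would run the classical Hopf–Lax scheme adapted to the weak operator. Upper bound: testing with $p=(1-\lambda)\delta_x+\lambda\delta_y$ gives $\tfrac{\tQ_tf(x)-f(x)}{t}\le u\,\tfrac{f(y)-f(x)}{d(x,y)}+\tfrac{u^2}{2}$ with $u=\lambda d(x,y)/t$ a free parameter (admissible for small $t$); the choice $u=[f(x)-f(y)]_+/d(x,y)$ yields $\limsup_{t\to0^+}\tfrac{\tQ_tf(x)-f(x)}{t}\le-\tfrac12\bigl([f(x)-f(y)]_+/d(x,y)\bigr)^2$, and then $\le-\tfrac12|\widetilde{\nabla}f|^2(x)$ after a supremum over $y$. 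Lower bound: for every $p$ one has $\int f\,dp\ge f(x)-\int[f(x)-f(y)]_+\,p(dy)\ge f(x)-|\widetilde{\nabla}f|(x)\,I_x(p)$, hence $\tQ_tf(x)\ge f(x)-\tfrac{t}{2}|\widetilde{\nabla}f|^2(x)$ by minimising the resulting quadratic in $I_x(p)$, giving $\liminf_{t\to0^+}\tfrac{\tQ_tf(x)-f(x)}{t}\ge-\tfrac12|\widetilde{\nabla}f|^2(x)$. The two bounds together give $\tfrac{\partial}{\partial t}\tQ_tf(x)\big|_{t=0}=-\tfrac12|\widetilde{\nabla}f|^2(x)$ (both sides being $-\infty$ when $|\widetilde{\nabla}f|(x)=\infty$).

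The main obstacle, and the only genuinely new ingredient, is the gradient estimate $|\widetilde{\nabla}\tQ_tf|(x)\le I_x(p_t)/t$: unlike in the continuous Hopf–Lax setting it must hold for \emph{all} competitors $z$ simultaneously (there is no "$z\to x$" available on a graph), and this is precisely what the convexity of the functional in $p$, through the first-order optimality condition for $p_t$, buys us. The remaining points are soft: compactness and tightness for the existence of $p_t$, concavity in $1/t$ for the regularity of $t\mapsto\tQ_tf(x)$ and the attendant care in reading $\partial_t$ off a countable set, and the elementary one-variable optimisations underlying the $t=0$ estimates.
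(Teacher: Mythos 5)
Your proof is correct and reaches the theorem by a genuinely different route than the paper. The paper (Lemma \ref{mfnonempty}, Theorem \ref{the1}, Proposition \ref{eq6}, then Theorem \ref{thm:main2}) reduces $\tQ_tf(x)$ to the classical one-dimensional Hopf--Lax operator applied at $0$ to the convex hull $\widetilde{f}_x$ of the radial function $f_x$, imports the left/right time-derivative formulas from \cite{GRS14}, and obtains the gradient bound and the identification $|\widetilde{\nabla}f|(x)=|\widetilde{f}_x^{\,\,'}(0)|$ through that reduction. You instead stay in $\mathcal{P}(X)$: after producing a minimizer $p_t$ by tightness (this is exactly Lemma \ref{mfnonempty}), you use convexity of $p\mapsto\int f\,dp+\frac{1}{2t}\left(\int d(x,\cdot)\,dp\right)^2$ and the first-order optimality condition at $p_t$ to get the calibration $f(y)\ge\tQ_tf(x)+\frac{u_t^2}{2t}-\frac{u_t}{t}d(x,y)$ with $u_t=\int d(x,y)\,p_t(dy)$, and completing the square turns this into $|\widetilde{\nabla}\tQ_tf|(x)\le u_t/t$ simultaneously for all competitors $z$; combined with the elementary comparison $\partial_t^{+}\tQ_tf(x)\le-u_t^2/(2t^2)$ this closes the case $t>0$, while the two-point tests and the bound $[f(x)-f(y)]_+\le|\widetilde{\nabla}f|(x)\,d(x,y)$ give the $t=0$ identity. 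Your route is more elementary and self-contained (no convex-hull machinery, no appeal to the derivative formulas of \cite{GRS14}), your optimality-condition derivation of the gradient bound is a clean alternative to Proposition \ref{eq6}$(i)$ (which needs minimizers at both points), and you even cover the case $|\widetilde{\nabla}f|(x)=+\infty$, which the statement of Theorem \ref{thm:main2}$(ii)$ excludes. The one thing your argument does not deliver is everywhere-differentiability in $t$: the paper shows, via the constancy of $\beta(\cdot/t)$ on $\widetilde{m}_f(t,x)$, that the left and right derivatives of $t\mapsto\tQ_tf(x)$ coincide at every $t>0$, so that $\frac{\partial}{\partial t}\tQ_tf(x)$ in the statement is an honest derivative, whereas your concavity-in-$1/t$ argument yields differentiability only off a countable set and you fall back on the right derivative at the exceptional times; this is a minor weakening of the statement as the paper proves it, and restoring full differentiability would require an additional argument (for instance the paper's), the rest of your proof being sound.
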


With such a result in hand, we can then follow the work by Bobkov, Gentil and Ledoux \cite{BGL} to prove a  result analogous to the celebrated Otto and Villani Theorem \cite{OV00}. Namely we shall prove that some log-Sobolev type inequality is equivalent to an hypercontractivity property of the semi-group $\widetilde Q_t$, which in turn, by a duality argument due to Gozlan et al.\  \cite{GRST14}, implies some Talagrand type transport-entropy inequality. To state this result one needs to introduce some additional notations.
Consider the usual $q$-norm of a function $g$ on $X$ defined by $\|g\|_q = (\int |g|^q \,d\mu )^{1/q}$, $q \in \mathbb{R}$,  with, when this makes sense, $\|g\|_0:=\lim_{q \to 0} \|g\|_q=\exp\{\int \log g \,d\mu\}$, and when $g\geq0$, consider also the entropy functional defined by $\mathrm{Ent}_{\mu}(g)=\int g \log g \,d\mu - \int g \,d\mu \log \int g \,d\mu$.

\begin{Cor} \label{cormain}
Let $\mu$ be a probability measure on $X$ and $C>0$. Then
\begin{description}
\item $(i)$ If for all bounded measurable function $f \colon X \to \mathbb{R}$ it holds,
\begin{equation}\label{eqMLS-(2/c)intro}
\mathrm{Ent}_{\mu} (e^f)\leq C \int  |\widetilde{\nabla}f|^2e^{f} \,d\mu,
\end{equation}
 then for every $\rho \geq 0$, every $t\geq 0$ and every bounded measurable function $f$,
 \begin{equation}\label{eq7intro}
\|e^{\tQ_tf}\|_{\rho +\frac{2t}{C}}\leq \|e^f\|_\rho.
\end{equation}
Conversely, if (\ref{eq7intro}) holds for some $\rho > 0$ and for all $t\geq 0$, then \eqref{eqMLS-(2/c)intro} holds.
\item $(ii)$
If for all bounded measurable function $f \colon X \to \mathbb{R}$ it holds,
\begin{equation}\label{eqMLS-(2/c)intro1}
\mathrm{Ent}_{\mu} (e^f)\leq C \int  |\widetilde{\nabla}(-f)|^2e^{f} \,d\mu,
\end{equation}
 then \eqref{eq7intro} holds for every $\rho \leq 0$, every $t\in [0,-\rho C/2]$ and every bounded measurable function $f$.
Conversely, if (\ref{eq7intro}) holds for some $\rho < 0$ for all $t\in [0,-\rho C/2)$, then \eqref{eqMLS-(2/c)intro1} holds.
\end{description}
\end{Cor}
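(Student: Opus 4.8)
The plan is to follow the scheme of Bobkov, Gentil and Ledoux \cite{BGL} for the Otto--Villani theorem, with the Hopf--Lax semigroup replaced by $\tQ_t$ and the classical Hamilton--Jacobi equation replaced by the one-sided bound of Theorem~\ref{thm:main}. Fix a bounded measurable $f$ and write $u_t:=\tQ_tf$; taking $p=\delta_x$ in the infimum defining $\tQ_t$ gives $u_t\le f$, and $u_t\ge\inf f$ is trivial, so on every compact interval of times $e^{\rho(t)u_t}$ will be bounded and bounded away from $0$. With $\rho$ as in the statement, I would set $\rho(t):=\rho+2t/C$ and
$$
N(t):=\int e^{\rho(t)u_t}\,d\mu,\qquad \Lambda(t):=\log\bigl\|e^{\tQ_tf}\bigr\|_{\rho(t)}=\frac{1}{\rho(t)}\log N(t),
$$
and aim to prove $\Lambda(t)\le\Lambda(0)=\log\|e^f\|_\rho$ --- on all of $[0,\infty)$ in case $(i)$, on $[0,-\rho C/2]$ in case $(ii)$. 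Modulo the analytic checks discussed at the end, $\Lambda$ is absolutely continuous there, so it suffices to show $\Lambda'(t)\le0$ a.e.\ and integrate; the endpoints where $\rho(t)=0$ are reached by continuity, reading $\|\cdot\|_0=\exp\int\log(\cdot)\,d\mu$, and $t=0$ by the initial condition of Theorem~\ref{thm:main}.

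The heart of the argument is to differentiate $N$ under the integral sign; after rearranging, using $\rho(t)\int u_t\,e^{\rho(t)u_t}\,d\mu-N(t)\log N(t)=\ent_\mu(e^{\rho(t)u_t})$, one reaches
$$
\Lambda'(t)=\frac{\rho'(t)}{\rho(t)^2\,N(t)}\,\ent_\mu\!\bigl(e^{\rho(t)u_t}\bigr)+\frac{1}{N(t)}\int\partial_tu_t\;e^{\rho(t)u_t}\,d\mu .
$$
Since $e^{\rho(t)u_t}/N(t)>0$, I would bound the last term using $\partial_tu_t\le-\tfrac12|\widetilde\nabla u_t|^2$ from Theorem~\ref{thm:main} (valid for $t>0$), which works whatever the sign of $\rho(t)$, and then dominate the entropy by the resulting energy term via the log-Sobolev hypothesis applied to $g:=\rho(t)u_t$. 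When $\rho(t)\ge0$ (case $(i)$), the definition of $|\widetilde\nabla\cdot|$ gives $|\widetilde\nabla(\rho(t)u_t)|=\rho(t)|\widetilde\nabla u_t|$, so \eqref{eqMLS-(2/c)intro} gives $\ent_\mu(e^{\rho(t)u_t})\le C\rho(t)^2\int|\widetilde\nabla u_t|^2e^{\rho(t)u_t}\,d\mu$; when $\rho(t)\le0$ (case $(ii)$), instead $|\widetilde\nabla(-\rho(t)u_t)|=|\rho(t)|\,|\widetilde\nabla u_t|$ and \eqref{eqMLS-(2/c)intro1} gives the same bound. Combined with the value $\rho'(t)=2/C$, this should make the bracket non-positive, i.e.\ $\Lambda'(t)\le0$, and integrating from $0$ to $t$ then yields \eqref{eq7intro} --- in case $(ii)$ first for $t<-\rho C/2$, and up to the endpoint by letting $\rho(t)\uparrow0$.

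For the converse implications I would argue that, since $\tQ_tf\to f$ as $t\downarrow0$, the two sides of \eqref{eq7intro} agree at $t=0$, so $\Lambda(t)\le\Lambda(0)$ near $t=0$ forces the right derivative $\Lambda'(0^+)\le0$. Computing $\Lambda'$ at $t=0$ as above but now using the \emph{equality} $\partial_t\tQ_tf|_{t=0}=-\tfrac12|\widetilde\nabla f|^2$ of Theorem~\ref{thm:main}, the inequality $\Lambda'(0^+)\le0$ turns into a bound of the shape $\ent_\mu(e^{\rho f})\le c\,\rho^2\int|\widetilde\nabla f|^2e^{\rho f}\,d\mu$ with the right constant $c$; finally, substituting $g:=\rho f$ and using the homogeneity of $|\widetilde\nabla\cdot|$ --- which turns $|\widetilde\nabla f|$ into $|\rho|^{-1}|\widetilde\nabla g|$ when $\rho>0$ but into $|\rho|^{-1}|\widetilde\nabla(-g)|$ when $\rho<0$ --- and letting $f$, hence $g$, range over all bounded measurable functions recovers \eqref{eqMLS-(2/c)intro} in case $(i)$ and \eqref{eqMLS-(2/c)intro1} in case $(ii)$.

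The differential inequality and the Gronwall step are short; I expect the real difficulties to be analytic. One will need to justify differentiating under the integral sign and the a.e.\ differentiability of $t\mapsto\Lambda(t)$, leaning on the uniform two-sided bounds on $u_t$ and on whatever $t$-regularity of $t\mapsto\tQ_tf(x)$ underlies Theorem~\ref{thm:main} (a one-sided derivative being enough for Gronwall). The harder point, and the one I would expect to be the main obstacle, is the boundary behaviour of $\|e^{\tQ_tf}\|_{\rho(t)}$ at the degenerate endpoints: as $\rho(t)\to0$ the $\rho(t)$-norm collapses to the geometric mean $\|\cdot\|_0$, and one also needs the limit $t\downarrow0$ --- precisely the places where the original argument of \cite{BGL} requires care.
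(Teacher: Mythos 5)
Your plan is the paper's own proof: interpolate with $F(t)=\frac{1}{k(t)}\log\int e^{k(t)\tQ_tf}\,d\mu$ along a linear exponent $k$, differentiate, bound $\partial_t\tQ_tf$ by the Hamilton--Jacobi inequality of Theorem \ref{thm:main}, use the homogeneity of the gradient (Remark \ref{remark}) to absorb $|k(t)|$ and apply \eqref{eqMLS-(2/c)intro} when $k(t)\geq 0$, respectively \eqref{eqMLS-(2/c)intro1} when $k(t)\leq 0$, to get $F'\leq 0$; for the converse, differentiate at $t=0$ and use the equality case $\partial_t\tQ_tf|_{t=0}=-\tfrac12|\widetilde\nabla f|^2$ together with homogeneity. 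Your treatment of case $(ii)$ and of the converse coincides with the paper's, and your comments on the analytic points (differentiability in $t$, which the paper extracts from Theorem \ref{the1}; the degenerate exponent $\rho(t)=0$ and the endpoint of the time interval, which the paper does not discuss) are sensible additions rather than obstacles.

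The one step that does not survive an explicit computation is the claim that with $\rho(t)=\rho+\frac{2t}{C}$ the bracket is non-positive. Inserting $\partial_t u_t\leq-\tfrac12|\widetilde\nabla u_t|^2$ and then the modified log-Sobolev inequality (via $|\widetilde\nabla(\pm\rho(t)u_t)|=|\rho(t)|\,|\widetilde\nabla u_t|$) into your formula for $\Lambda'$ gives
\begin{equation*}
\Lambda'(t)\;\leq\;\Bigl(C\rho'(t)-\tfrac12\Bigr)\,\frac{1}{N(t)}\int |\widetilde\nabla u_t|^2\,e^{\rho(t)u_t}\,d\mu ,
\end{equation*}
so the Gronwall step needs $\rho'(t)\leq\frac{1}{2C}$, whereas your choice gives $\rho'(t)=\frac{2}{C}$ and hence the positive factor $\tfrac32$. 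This is not a defect of the method but of the normalization: the paper's own proof silently works with $k(t)=\rho+\frac{t}{2C}$, $k'(t)=\frac{1}{2C}$, which is exactly the threshold value, while the statement of the corollary (which you followed) prints the exponent $\rho+\frac{2t}{C}$ and the window $[0,-\rho C/2]$; with the log-Sobolev constant $C$ as in \eqref{eqMLS-(2/c)intro} the argument closes only for the $\rho+\frac{t}{2C}$ exponent (and, in case $(ii)$, the window $[0,-2\rho C]$), or equivalently for the stated exponent under the log-Sobolev inequality with constant $C/4$. You should either adopt the paper's $k(t)$ or flag the constant in the statement; note that the converse direction is unaffected (with your $\rho'$ it even returns \eqref{eqMLS-(2/c)intro} with the stronger constant $C/4$).
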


\begin{The}\label{equivalencelogsobtalagrand}
Let $\mu$ be a probability measure on $X$ and $C>0$. Then the following conditions are equivalent
\begin{description}
\item $(i)$ $\mu$ satisfies the modified log-sob inequality \eqref{eqMLS-(2/c)intro} with constant $C_1>0$.
\item $(ii)$ There exists $C_2>0$ for all $\nu$ probability measure on $X$,
\begin{equation} \label{t2tilde}
\widetilde T_2(\mu|\nu) \leq C_2 H(\nu|\mu).
\end{equation}
\end{description}
where $H(\nu|\mu)$ is the relative entropy of $\nu$ with respect to $\mu$, \textit{i.e.}\
$H(\nu|\mu)=\mathrm{Ent}_{\mu}(g)$ if $\nu \ll \mu$ and $g:=d\nu/d\mu$, and $H(\nu|\mu)=+\infty$ otherwise.
Moreover, $(i)\Rightarrow (ii)$ with $C_2=C_1/2$, $(ii)\Rightarrow (i)$ with $C_1=2C_2$.
\end{The}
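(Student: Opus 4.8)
We outline the strategy of the proof. Both implications will be routed through the \emph{weak infimum-convolution inequality}
\[
\int e^{\tQ_s g}\,d\mu\le e^{\int g\,d\mu}\qquad\text{for every bounded measurable }g ,
\]
which we denote $(\star_s)$, for a suitable parameter $s=s(C)>0$; this plays here the role that the Bobkov--Götze exponential bound plays for the classical quadratic transport inequality. Two ``bridges'' connect $(\star_s)$ to the statements of the theorem. The first bridge is its equivalence with a transport-entropy inequality: the Kantorovich-type duality of \cite{GRST14} identifies the weak cost $\widetilde T_2(\mu|\nu)$, up to a multiplicative constant depending only on $s$, with $\sup_g\big\{\int\tQ_s g\,d\nu-\int g\,d\mu\big\}$ (supremum over bounded $g$); feeding $h=\tQ_s g$ into the variational formula for relative entropy, $\int h\,d\nu\le\log\int e^h\,d\mu+H(\nu|\mu)$, one then checks by a soft computation that $(\star_s)$ holds if and only if $\widetilde T_2(\mu|\nu)\le\kappa(s)\,H(\nu|\mu)$ for all $\nu$, where $\kappa$ is an explicit linear function. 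The subtlety here is purely a matter of normalizations --- the $\tfrac1{2t}$ inside $\tQ_t$ versus the absence of a $\tfrac12$ in \eqref{t2tilde} --- and it is from tracking these that the precise relation $C_2=C_1/2$ will come out. The second bridge is Corollary \ref{cormain}$(i)$, which relates $(\star_s)$ to the modified log-Sobolev inequality \eqref{eqMLS-(2/c)intro} via the Hamilton--Jacobi inequality of Theorem \ref{thm:main} (the Herbst-type scheme of \cite{BGL}).

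\medskip

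For $(i)\Rightarrow(ii)$, assuming \eqref{eqMLS-(2/c)intro} with constant $C_1$, I would apply Corollary \ref{cormain}$(i)$ at $\rho=0$: this gives $\|e^{\tQ_tf}\|_{2t/C_1}\le\|e^f\|_0=e^{\int f\,d\mu}$ for every $t>0$. Raising to the power $C_1/(2t)$ and substituting, using the homogeneity $\tQ_t(\lambda g)=\lambda\,\tQ_{\lambda t}g$ valid for $\lambda>0$, this single estimate becomes exactly $(\star_s)$ for the appropriate $s$; the first bridge then yields the transport-entropy inequality \eqref{t2tilde}, and tracking the constants gives $(ii)$ with $C_2=C_1/2$. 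This direction is essentially bookkeeping on top of Corollary \ref{cormain} and the duality of \cite{GRST14}.

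\medskip

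For $(ii)\Rightarrow(i)$, one uses the first bridge in the reverse direction (the ``only if'' half, which is the easy half of Bobkov--Götze) to obtain $(\star_s)$ from $(ii)$, and the whole difficulty is to recover \eqref{eqMLS-(2/c)intro} from $(\star_s)$. The converse part of Corollary \ref{cormain}$(i)$ cannot be invoked directly: it requires the full hypercontractivity family $\|e^{\tQ_tf}\|_{\rho+2t/C}\le\|e^f\|_\rho$ for some $\rho>0$, whereas $(\star_s)$ is only its ``$\rho=0$'' corner; and a second-order expansion of $(\star_s)$ returns at best a Poincaré inequality (for the classical quadratic cost there is indeed no such implication, since the Otto--Villani theorem \cite{OV00} is not reversible). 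The point at which the discrete/weak structure must be used is the self-improvement of $(\star_s)$: since the weak cost $\widetilde T_2$ tensorizes, $\mu^{\otimes n}$ satisfies $(\star_s)$ on $X^n$ for every $n$, and by applying these product inequalities to functions built from $f$ on the $n$ coordinates --- e.g.\ $g_n(x_1,\dots,x_n)=\tfrac1n\sum_i f(x_i)$, and suitable modifications thereof --- and passing to the limit $n\to\infty$, a sub-additivity argument upgrades $(\star_s)$ to the full hypercontractivity family for some $\rho>0$. The converse part of Corollary \ref{cormain}$(i)$ then yields \eqref{eqMLS-(2/c)intro}, with $C_1=2C_2$ after the constant bookkeeping.

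\medskip

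I expect the main obstacle to be precisely this last upgrade in $(ii)\Rightarrow(i)$: producing the full hypercontractivity family --- equivalently, the modified log-Sobolev inequality \eqref{eqMLS-(2/c)intro} --- out of the single infimum-convolution inequality $(\star_s)$. This is the genuinely non-perturbative part of the argument and the point where the weak/discrete structure of $\widetilde T_2$ is essential; it is also where the exact constant $C_1=2C_2$ has to be monitored. By contrast, $(i)\Rightarrow(ii)$ and both ``bridges'' are soft once Corollary \ref{cormain}, Theorem \ref{thm:main} and the duality of \cite{GRST14} are available.
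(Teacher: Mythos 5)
Your direction $(i)\Rightarrow(ii)$ is essentially the paper's argument: apply Corollary \ref{cormain}$(i)$ at $\rho=0$ (the paper takes $t=1$ directly) and invoke the dual characterization of \eqref{t2tilde} from \cite[Theorem 5.5]{Dualdiscrete}; that part is fine, modulo being careful that $\widetilde T_2$ is not symmetric, so the orientation of the duality formula (which of $\mu,\nu$ carries $\tQ$) is not ``purely a matter of normalizations''.

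The genuine gap is in $(ii)\Rightarrow(i)$, exactly at the step you flag. You propose to upgrade the single inf-convolution inequality $(\star_s)$ to the full hypercontractivity family $\|e^{\tQ_tf}\|_{\rho+2t/C}\leq\|e^f\|_\rho$ for some $\rho>0$ by tensorizing $\widetilde T_2$ and applying the product inequalities to averaged functions, and then use the converse half of Corollary \ref{cormain}$(i)$. No mechanism is given for this upgrade, and there is good reason to doubt it: dimension-free/tensorization arguments of this kind characterize transport-entropy inequalities and concentration (Gozlan), not log-Sobolev type inequalities, and in the continuous setting $T_2$ does \emph{not} imply the log-Sobolev inequality, so any such self-improvement must exploit a specific structural feature of the weak operator which your sketch never identifies. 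The paper's actual route is different and bypasses hypercontractivity altogether: by \cite[Proposition 8.3]{Dualdiscrete}, inequality \eqref{t2tilde} yields, for $\lambda\in(0,1/C_2)$, the bound $\mathrm{Ent}_\mu(e^f)\leq\frac{1}{1-\lambda C_2}\int(f-\tQ_{1/\lambda}f)e^f\,d\mu$; then the key structural input is that $t\mapsto\tQ_tf$ is convex (\cite[Proposition 2.2]{FS2015}), which combined with the $t=0$ identity of Theorem \ref{thm:main} gives the \emph{global} bound $f-\tQ_{s}f\leq\frac{s}{2}|\widetilde\nabla f|^2$ for all $s>0$ (false for the classical Hopf--Lax semigroup, and this is precisely why the implication fails with $|\nabla|$ in the continuous setting). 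Plugging this in and optimizing $\lambda=1/(2C_2)$ gives \eqref{eqMLS-(2/c)intro} with $C_1=2C_2$. Without either this convexity-based estimate or an actual proof of your proposed self-improvement, your argument for $(ii)\Rightarrow(i)$ does not go through.
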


The log-Sobolev-type inequality \eqref{eqMLS-(2/c)intro} is implied by the usual Gross' inequality \cite{G75} in the continuous setting (since $|\widetilde{\nabla}f| \geq |\nabla f|$). In discrete, there exist a lot of different versions of the log-Sobolev inequality -- that are all equivalent in the continuous, thanks to the chain rule formula -- each of them having some nice property (connection to the decay to equilibrium of Markov processes, concentration phenomenon etc.). We refer the reader to the paper by Bobkov and Tetali \cite{BobkovTetali}
for an introduction to many of these inequalities and related properties. In particular, in
\cite{BobkovTetali}, the log-Sobolev type inequality \eqref{eqMLS-(2/c)intro} is studied, with some local gradient in place of $\widetilde{\nabla}$. As we shall prove below, the usual log-Sobolev inequality in discrete, with transitions given by a Markovian matrix, implies \eqref{eqMLS-(2/c)intro}. In turn, since such an inequality is very well studied in many situations (see \textit{e.g.}\ the monographs \cite{Saloff,1} and \cite{martinelli,guionnet} for results on general graphs and examples coming from physics)
this provides a lot of examples of non trivial measures (on graphs) that satisfy the Talagrand-type transport-entropy inequality \eqref{t2tilde}.
\medskip

Inequality \eqref{t2tilde}  is related to the concentration phenomenon and was studied by the authors listed above (Dembo, Gozlan, Marton, Roberto, Samson, Tetali, Wintenberger). However, proving directly \eqref{t2tilde} for non-trivial measures is not an easy task and, to the best of our knowledge, there exist very few examples of measures satisfying \eqref{t2tilde}. In fact, Corollary \ref{cormain} above, together with the important literature on the log-Sobolev inequality provide at once new examples.
\medskip

That \eqref{eqMLS-(2/c)intro} implies \eqref{t2tilde}(with $|\nabla|$ in place of $\widetilde \nabla$) is known, in the continuous setting, as Otto-Villani's Theorem \cite{OV00}. Such a theorem was proved using Otto calculus in the original paper
\cite{OV00} in the Riemannian setting. Soon after, Bobkov, Gentil and Ledoux \cite{BGL} gave an alternative proof based on Hamilton-Jacobi equation. Then, it was generalized to compact measured geodesic spaces by Lott and Villani \cite{LV07,LV09} (see also \cite{balogh}), and to general metric spaces by Gozlan \cite{gozlan09}, see also Gozlan, Roberto and Samson \cite{GRS14} and for an approach based on the Hamilton-Jacobi Semi-group. Later on, the original ingredients of Otto-Villani's paper were successfully adapted to the general metric space framework by Gigli and Ledoux \cite{GL13}. Our proof follows the Hamilton-Jacobi approach of \cite{BGL}.
We point out that \eqref{t2tilde} implies \eqref{eqMLS-(2/c)intro}(with $|\nabla|$ in place of $\widetilde \nabla$) is not true in the continuous setting.
\medskip

We conclude this introduction with some more comments and a short roadmap of the paper.
\medskip

In the next section, we introduce various notations and derive some technical and useful facts on the operator  $\tQ_t$ that might be of independent interests. We also prove that $Q_tf(x):=\inf_{y\in V}\{f(y)+D_t(x,y)\}$ usually does not satisfy any semi-group property. In Section \ref{sec:proof} we prove
Theorem \ref{thm:main}. Section \ref{functional inequality} is dedicated to the applications to functional inequalities, while Section \ref{sec:examples} collects some examples that will illustrate our main theorems.
Finally, in the Appendix we prove a technical result.
\medskip

We mention that the results above can be proved in a more general situation, namely by replacing
the cost $x^2/2$ by a general convex function $\alpha$ (with the Fenchel-Legendre dual function $\alpha^*$ appearing in the corresponding Hamilton-Jacobi equation), see below.
Finally we observe that there exist other papers dealing with Hamilton-Jacobi equation on graphs, but with very different perspectives (approximation scheme, viscosity solution, etc.).
We refer to \cite{camilli} and references therein for an account on these topics.

\section{Preliminaries} \label{preliminaries}

In this section, we introduce some notations and prove some properties on the operator $\tQ_t$ and on the gradient $\widetilde \nabla$ that will be useful later on.

\subsection{Notations} \label{sec:notation}

\subsubsection*{Space}
In all the paper $(X,d)$ stands for a polish space (\textit{i.e.}\ complete and separable), such that closed balls are compact. In the discrete case, $X=G=(V,E)$ will denote a (simple) connected graph with vertex set $V$ and edge set $E$ (given $(x,y) \in E$, we may write $x \sim y$). We assume that all vertices have finite degree. The graph distance will be denoted by $d$. Next, $\mathcal{P}(X)$ stands for the set of all probability measure on $X$, and, in order to emphasize the discrete character, when $X=G=(V,E)$ is a graph,
we may use instead $\mathcal{P}(V)$.


\subsubsection*{Inf-convolution operator}

Throughout the paper, $\alpha:\RR^+\rightarrow \RR^+$ denotes a convex function, of class $\mathcal{C}^1$, such that $\alpha(0)=\alpha'(0)=0$ (so that $\alpha$ is non-decreasing).
Its Fenchel-Legendre transform is denoted by $\alpha^*$ and defined by
$\alpha^*(x):=\sup_{y \in \mathbb{R}^+} \{ xy - \alpha(y)\}$, $x \in \RR^+$.
A typical example of such a function is given by $\alpha(x)=x^2/2$, and more generally by
$\alpha(x)=x^p/p$ for which $\alpha^*(x)=x^q/q$ with $p^{-1}+q^{-1}=1$, $p,q >1$. Another example (related to the Poincar\'e inequality, see Section \ref{Poincare}) is the following, called \emph{quadratic-linear cost}, $\alpha_a^h(x):=ax^2$ if $x\in [0,a]$ and $\alpha_a^h(x)=2ax-ah^2$ if $x\geq a$, with $a,h>0$ two parameters.

Given $f \colon X \to \mathbb{R}$, we denote by $\|f\|_{Lip}:=\sup_{x,y, x\neq y} \frac{f(y)-f(x)}{d(x,y)}$ the Lipschitz norm of $f$.


Next we define the (inf-convolution) operators $Q_tf$, $\tQ_t$ and $\hat{Q}_t$.
Given $f:X\rightarrow \RR$ bounded from below, $x \in X$  and $t>0$, let
\begin{equation*}
Q_tf(x):=\inf_{y\in X}\left\{f(y)+t\alpha\left(\frac{d(x,y)}{t}\right)\right\},
\end{equation*}
\begin{equation*}
\tQ_tf(x):=\inf_{p\in \mathcal{P}(X)}
\left\{\int f\,dp+t\alpha\left(\frac{\int d(x,y)\,p(dy)}{t}\right)\right\},
\end{equation*}
Restricting the infimum to the set of Dirac masses, we observe that necessarily  $Q_tf \geq \tQ_tf$. As we shall see on the example of the two points space, the latter inequalities are strict in general. However, in specific cases (if $f$ is convex and $X = \RR^n$ equipped with a norm $\| \cdot \|$) equality holds. We illustrate this in the following proposition.

\begin{Pro}\label{exconvex}
Assume that $X=\RR^n$ equipped with a distance $d$ coming from a norm $\|\cdot\|$. Then, for all $f \colon \RR^n \mapsto \RR$ convex and bounded from below, $\tQ_tf=Q_tf$.
\end{Pro}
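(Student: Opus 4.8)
The plan is to prove the reverse of the inequality already recorded before the statement: since restricting the infimum defining $\tQ_t f$ to Dirac masses gives $Q_t f \geq \tQ_t f$, it suffices to show $\tQ_t f(x) \geq Q_t f(x)$ for every $x$. The mechanism is to replace an arbitrary competitor $p \in \mathcal P(\RR^n)$ in the definition of $\tQ_t f(x)$ by its barycenter $\bar y \coloneq \int y \, p(dy)$, exploiting convexity of $f$ and of the norm.

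First I would dispose of integrability issues. If $\alpha \equiv 0$ both operators reduce to $\inf f$ (finite, since $f$ is bounded from below) and there is nothing to prove. Otherwise $\alpha$, being convex with $\alpha(0)=\alpha'(0)=0$, is non-decreasing and unbounded, so any $p$ with $\int \|x-y\|\,p(dy)=+\infty$ makes the term $t\alpha\!\big(\tfrac1t\int\|x-y\|\,p(dy)\big)$ equal to $+\infty$ and hence irrelevant to the infimum. So one may restrict to $p$ with $\int \|x-y\|\,p(dy) < \infty$; by the triangle inequality $\|y\| \le \|x\| + \|x-y\|$ this forces $\int \|y\|\,p(dy) < \infty$, so $\bar y$ is a well-defined point of $\RR^n$.

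Then, for such a $p$: Jensen's inequality applied to the convex function $f$ (which, being finite on all of $\RR^n$, is continuous there) yields $\int f\,dp \ge f(\bar y)$; Jensen's inequality applied to the convex function $z \mapsto \|x-z\|$ yields $\int \|x-y\|\,p(dy) \ge \|x-\bar y\|$, and since $\alpha$ is non-decreasing, $t\alpha\!\big(\tfrac1t\int\|x-y\|\,p(dy)\big) \ge t\alpha\!\big(\tfrac1t\|x-\bar y\|\big)$. Adding these two estimates gives
$$
\int f\,dp + t\alpha\!\left(\frac{\int \|x-y\|\,p(dy)}{t}\right) \;\ge\; f(\bar y) + t\alpha\!\left(\frac{\|x-\bar y\|}{t}\right) \;\ge\; Q_t f(x),
$$
and taking the infimum over all admissible $p$ gives $\tQ_t f(x) \ge Q_t f(x)$, which combined with $Q_t f \ge \tQ_t f$ proves the claim.

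I do not expect a genuinely hard step here: the argument is a double application of Jensen. The only points deserving care are the integrability reduction that guarantees the barycenter exists (handled above via the unboundedness of a nonzero $\alpha$), and the legitimacy of Jensen's inequality for $f$, which is unproblematic because a convex function finite on all of $\RR^n$ is automatically locally Lipschitz while the relevant measures $p$ have finite first moment. It is worth noting that convexity of $f$ is essential: as the two-point example mentioned above shows, $Q_t f = \tQ_t f$ fails for general $f$.
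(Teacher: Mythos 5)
Your proof is correct and follows essentially the same route as the paper's: both reduce to showing $\tQ_tf \geq Q_tf$ by applying Jensen's inequality to the convex functions $f$ and $\|x-\cdot\|$ together with the monotonicity of $\alpha$, replacing a competitor $p$ by its barycenter. Your additional remarks on integrability (and the degenerate case $\alpha\equiv 0$) only make explicit a reduction the paper performs implicitly by restricting to measures with finite first moment.
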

\begin{proof}
By convexity of $f$ and of the norm, Jensen's Inequality and the monotonicity of $\alpha$ imply that, for all $p \in \mathcal{P}(\RR^n)$ such that $\int \|x\|\,p(dx)$ is finite, it holds
$$
\int f(y)\,p(dy)+t\alpha\left(\frac{\int \|x-y\|\,p(dy)}{t}\right)
\geq f\left(\int y\,p(dy)\right)+t\alpha\left(\frac{1}{t} \| x - \int y \,p(dy) \|\right) .
$$
Hence, setting $z:=\int y\,p(dy) \in \RR^n$ and optimizing we get
\begin{align*}
\tQ_tf(x)
&=
\inf_{p \text{ s.t. } \int \|x\|\,p(dx)<\infty}
\left\{\int f\,dp+t\alpha\left(\frac{\int \|x- y\|\,p(dy)}{t}\right)\right\} \\
& \geq
\inf_{z\in \RR^n}
\left\{f(z)+t\alpha\left(\frac{\|x-z\|}{t}\right)\right\}
 =Q_tf(x)
\end{align*}
which leads to the desired result.
\end{proof}

\subsection{Properties of the operator $\tQ_t$} \label{sec:qt}
In all what follows, $f:X\to \RR$ is a lower semicontinuous function bounded from below. Let
\begin{equation} \label{defm}
m_f(t,x):= \left\{ p \in \mathcal{P}(X): \tQ_tf(x)=\int f\,dp +t\alpha\left(\frac{\int d(x,y)\,p(dy)}{t}\right) \right\}
\end{equation}
be the set (possibly empty) of probability measures $p$ realizing the infimum in the definition of $\tQ f$.
The following lemma shows that this set is not empty.
\begin{Lem}\label{mfnonempty}
If $f:X\to\RR$ is lower semicontinuous and bounded from below, then $m_f(t,x)\neq \emptyset$ for all $t>0$ and $x \in X.$
\end{Lem}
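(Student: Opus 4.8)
The plan is to run the direct method of the calculus of variations on the functional $J(p) := \int f\,dp + t\alpha\big(\tfrac1t\int d(x,y)\,p(dy)\big)$ over $\mathcal{P}(X)$: take a minimizing sequence, show it is tight, pass to a weak limit, and check that $J$ is sequentially lower semicontinuous for the weak topology, so that the weak limit is a minimizer.

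First I would record that $\tQ_t f(x)$ is finite: testing $p=\delta_x$ gives $\tQ_t f(x)\leq f(x)+t\alpha(0)=f(x)$, while $\int f\,dp\geq \inf f$ and $\alpha\geq 0$ give $\tQ_t f(x)\geq \inf f>-\infty$. Fix $t>0$ and $x\in X$, choose $(p_n)\subset\mathcal{P}(X)$ with $J(p_n)\to\tQ_t f(x)$ and, discarding finitely many terms, assume $J(p_n)\leq \tQ_t f(x)+1$ for all $n$; in particular $\int f\,dp_n<\infty$.

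The key step is tightness. Since $\int f\,dp_n\geq \inf f$, the bound on $J(p_n)$ yields $t\,\alpha\big(\tfrac1t\int d(x,y)\,p_n(dy)\big)\leq \tQ_t f(x)+1-\inf f$, uniformly in $n$. A convex function with $\alpha(0)=\alpha'(0)=0$ is non-decreasing, and unless it is identically $0$ it is unbounded, hence coercive; the previous bound then produces a uniform first-moment bound $\int d(x,y)\,p_n(dy)\leq M<\infty$. By Markov's inequality $p_n(\{y:d(x,y)>r\})\leq M/r$ for every $r>0$, and since closed balls of $(X,d)$ are compact this makes $(p_n)$ tight. By Prokhorov's theorem a subsequence, still denoted $(p_n)$, converges weakly to some $p\in\mathcal{P}(X)$.

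It remains to see that $p$ minimizes $J$. I would use two standard lower-semicontinuity facts for a weakly convergent sequence $p_n\to p$: (a) $\liminf_n\int f\,dp_n\geq\int f\,dp$, since $f$ is lower semicontinuous and bounded below; and (b) $\liminf_n\int d(x,y)\,p_n(dy)\geq\int d(x,y)\,p(dy)$, since $y\mapsto d(x,y)$ is continuous and non-negative. Because $u\mapsto t\alpha(u/t)$ is continuous and non-decreasing, (b) upgrades to $\liminf_n t\alpha\big(\tfrac1t\int d(x,y)\,p_n(dy)\big)\geq t\alpha\big(\tfrac1t\int d(x,y)\,p(dy)\big)$; adding this to (a) (using $\liminf(a_n+b_n)\geq\liminf a_n+\liminf b_n$) gives $J(p)\leq\liminf_n J(p_n)=\tQ_t f(x)$. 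The reverse inequality $J(p)\geq\tQ_t f(x)$ is immediate from the definition of $\tQ_t f(x)$, so $J(p)=\tQ_t f(x)$, i.e.\ $p\in m_f(t,x)$. The only genuinely delicate point is the uniform first-moment bound used for tightness — that is where compactness of balls and the growth of $\alpha$ at infinity really enter — while the rest is the routine weak-lower-semicontinuity toolkit.
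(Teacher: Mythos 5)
Your proof is correct and follows essentially the same route as the paper: the paper also deduces a uniform first-moment bound from the level of the functional, uses Markov's inequality together with compactness of balls and Prokhorov's theorem, and invokes weak lower semicontinuity of $p\mapsto\int f\,dp$ and $p\mapsto\int d(x,y)\,p(dy)$, merely phrasing the conclusion as ``lower semicontinuous with compact sub-level sets, hence the minimum is attained'' instead of running a minimizing sequence explicitly. The only cosmetic difference is your explicit caveat about $\alpha\not\equiv 0$ (coercivity), which the paper assumes implicitly by writing $\alpha^{-1}$.
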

We postpone the proof of the lemma at the end of the section.
\medskip

In order to state the main theorem of this section we need to introduce some additional notations. Given $x\in X$, let $I_x:=\{ d(x,y), y \in X \} \subset \RR^+$ be the image of the function $X \ni y \mapsto d(x,y)$.  Since $(X,d)$ is a polish space such that all closed balls are compact, $I_x$ is a closed subset of $\RR$.
Then, define $f_x \colon I_x \rightarrow \mathbb{R}$ as
$$
f_x(u):=\min_{y \in X: d(x,y)=u} \{f(y) \}
$$
and notice that $f_x(0)=f(x)$. We will sometime consider that $f_x$ is defined on $[0,\infty)$ by setting $f_x(u)=+\infty$ when $x$ is outside $I_x.$ Let $\widetilde{I}_x$ be the convex hull of $I_x$ (since closed balls are assumed to be compact, $\widetilde{I}_x$ is one of the following intervals $[0,\sup I_x]$ (if $I_x$ is bounded) or $[0,+\infty)$ (if $I_x$ is unbounded)). Let $\widetilde{f}_x:\RR^+ \rightarrow \RR\cup\{+\infty\}$ be the convex hull of $f_x$, that is to say the greatest convex function $g:\RR\to\RR\cup\{+\infty\}$ such that $g(u)\leq f_x(u)$ for all $u\in I_x$. The function $\widetilde{f}_x$ takes finite values on $\widetilde{I}_x$ and is $+\infty$ outside $\widetilde{I}_x$.
Another way to define $\widetilde{f}_x$ on $\widetilde{I}_x$ is given in the following lemma whose proof is postponed at the end of the section. Let $\mathcal{P}_u(I_x)$ be the set of probability measures on $I_x$ with expectation $u$, \textit{i.e.}\ $\int_{I_x} y\, p(dy) = u$.
\begin{Lem}\label{Lem:convhull}Let $f:X\to \RR$ be a lower semicontinuous function and define $f_x$ and $\widetilde{f}_x$ as above. Then, for all $u \in \widetilde{I}_x$,
\begin{equation}\label{eq:convhull}
\widetilde{f}_x(u)=\inf \left\{\int_{I_x} f_x(w)\,q(dw) : q\in \mathcal{P}_u(I_x)\text{ charging at most two points}\right\}.
\end{equation}
Moreover, the function $\widetilde{f}_x$ is continuous on $\widetilde{I}_x$ and lower semicontinuous on $\RR.$
\end{Lem}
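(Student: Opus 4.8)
The plan is to prove the two assertions of Lemma \ref{Lem:convhull} in order: first the variational formula \eqref{eq:convhull}, then the continuity/semicontinuity statement.

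\textbf{Step 1: The variational formula.} Denote by $g(u)$ the right-hand side of \eqref{eq:convhull}, i.e.\ the infimum of $\int f_x\,dq$ over $q\in\mathcal P_u(I_x)$ charging at most two points. I would first check $\widetilde f_x(u)\le g(u)$: for any such two-point measure $q=\lambda\delta_{w_1}+(1-\lambda)\delta_{w_2}$ with $\lambda w_1+(1-\lambda)w_2=u$, convexity of $\widetilde f_x$ together with $\widetilde f_x\le f_x$ on $I_x$ gives $\widetilde f_x(u)\le\lambda\widetilde f_x(w_1)+(1-\lambda)\widetilde f_x(w_2)\le\lambda f_x(w_1)+(1-\lambda)f_x(w_2)=\int f_x\,dq$; taking the infimum over $q$ yields the bound. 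For the reverse inequality $g(u)\le\widetilde f_x(u)$, the cleanest route is to verify that $g$ itself is convex on $\widetilde I_x$ and satisfies $g\le f_x$ on $I_x$ (taking $q=\delta_u$ for $u\in I_x$), so that by the very definition of the convex hull as the greatest such convex minorant, $\widetilde f_x\ge g$. Convexity of $g$ is the crux here: given $u_1,u_2\in\widetilde I_x$ and near-optimal two-point measures $q_i$ for $g(u_i)$, the measure $sq_1+(1-s)q_2$ has barycenter $su_1+(1-s)u_2$ and charges at most four points; I need to argue that one can \emph{replace} it by a two-point measure with the same barycenter and no larger cost. This follows from the standard fact that for a finitely supported measure on $\RR$ with prescribed mean, the minimal value of $\int f_x\,dq$ is attained by a measure supported on at most two points --- indeed, among all measures on a fixed finite support with fixed mean (one linear constraint plus the normalization), the set is a polytope whose extreme points have support of size at most two, and the linear functional $q\mapsto\int f_x\,dq$ attains its minimum at an extreme point. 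This is the step I expect to be the main obstacle, mostly because it requires care with the bookkeeping of supports and with the possibility that the infimum is not attained (so one argues with near-minimizers and limits). An alternative, perhaps smoother, argument: it is classical that the convex hull of a function on $\RR$ equals $\widetilde f_x(u)=\inf\{\sum_i\lambda_i f_x(w_i):\sum_i\lambda_i w_i=u,\ \sum_i\lambda_i=1,\ \lambda_i\ge0\}$ (Carath\'eodory in the epigraph, giving a priori three points in $\RR$), and then a one-dimensional reduction collapses three points to two when the middle point is a convex combination of the outer two --- this directly gives the two-point formula.

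\textbf{Step 2: Continuity and lower semicontinuity.} A finite convex function on an interval is automatically continuous on the interior of that interval, so the only issue is continuity of $\widetilde f_x$ at the endpoints of $\widetilde I_x$ and lower semicontinuity at points of $\RR$ where $\widetilde f_x=+\infty$ (the endpoints of $\widetilde I_x$ and points outside it). At the left endpoint $0$: since $\widetilde f_x$ is convex and finite on $\widetilde I_x$ with $\widetilde f_x(0)\le f_x(0)=f(x)<\infty$, the right limit $\lim_{u\downarrow0}\widetilde f_x(u)$ exists (monotone difference quotients) and is $\le\widetilde f_x(0)$; combined with lower semicontinuity of $\widetilde f_x$ (which I get from the variational formula plus lower semicontinuity of $f_x$, itself inherited from lower semicontinuity of $f$ and compactness of spheres), this forces equality, so $\widetilde f_x$ is continuous at $0$. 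The same argument handles the right endpoint when $I_x$ is bounded; when $I_x$ is unbounded there is no right endpoint to worry about. For the global lower semicontinuity on $\RR$: at an interior point of $\widetilde I_x$ it is continuity; at a point outside $\widetilde I_x$, $\widetilde f_x=+\infty$ there and also on a neighbourhood, so lsc is trivial; at the endpoint(s) of $\widetilde I_x$, lsc amounts exactly to the one-sided limit being $\ge\widetilde f_x$(endpoint), which is the monotone-difference-quotient fact just used. To establish lsc of $f_x$ (needed above), I would take $u_n\to u$ in $I_x$ with points $y_n$ realizing $f_x(u_n)$, extract a convergent subsequence $y_n\to y$ using compactness of the closed ball of radius $\sup u_n$, note $d(x,y)=u$, and conclude $f_x(u)\le f(y)\le\liminf f(y_n)=\liminf f_x(u_n)$.

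In summary, the only genuinely nontrivial ingredient is the ``two points suffice'' reduction in Step 1, which is a one-dimensional extreme-point / Carath\'eodory-type argument; the rest is standard convex-analysis boilerplate about convex functions on an interval together with a compactness argument for lower semicontinuity.
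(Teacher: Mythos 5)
Your Step 1 is essentially sound and close in spirit to the paper's argument: the inequality $\widetilde f_x\le g$ follows from convexity of $\widetilde f_x$ and $\widetilde f_x\le f_x$, and the reverse inequality via convexity of the two-point value function (equivalently, the finite-support formula plus the observation that, on a fixed finite support with prescribed mean, the extreme points of the feasible polytope charge at most two points and a linear functional is minimized at an extreme point) is a legitimate variant of what the paper does, namely quoting \cite{HUL} for the finite-support formula, invoking Carath\'eodory to get three points, and then eliminating one point by minimizing an affine function of the mixing parameter $s$.

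The genuine gap is in Step 2, at the endpoints of $\widetilde I_x$. What is needed there is $\liminf_{u\to0^+}\widetilde f_x(u)\ge\widetilde f_x(0)$ (and the analogous bound at $\max I_x$ when $I_x$ is bounded), and you justify it by asserting that lower semicontinuity of $\widetilde f_x$ ``follows from the variational formula plus lower semicontinuity of $f_x$''. That implication is not automatic: a pointwise infimum of lower semicontinuous expressions is in general not lower semicontinuous, and the convex hull of a lower semicontinuous function need not be lower semicontinuous without further hypotheses (without a lower bound on $f$ it can even be identically $-\infty$). Your later remark that endpoint lower semicontinuity ``is the monotone-difference-quotient fact just used'' points in the wrong direction: convexity only yields $\lim_{u\to0^+}\widetilde f_x(u)\le\widetilde f_x(0)$ (the function $\phi$ with $\phi(0)=1$ and $\phi\equiv0$ on $(0,\infty)$ is convex with a strict drop at $0$), so it cannot supply the lower bound, and using the continuity you are in the middle of proving would be circular. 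The missing argument is precisely where the paper invests its effort: by lower semicontinuity of $f$ one has $f_x\ge f_x(0)-\epsilon$ on $[0,\eta)$, and combining this with a uniform lower bound $m$ on $f$ one minorizes $f_x$ on $[0,\infty)$ by the affine function $h_\epsilon$ through $(0,f_x(0)-\epsilon)$ and $(\eta,m)$, whence $\widetilde f_x\ge h_\epsilon$ and $\liminf_{u\to0^+}\widetilde f_x(u)\ge f_x(0)-\epsilon$. (Alternatively, in your two-point formula split according to whether the far support point lies within $\eta$ of $0$; when it does not, its weight is at most $u/(\eta-u)$ and is controlled by the lower bound $m$.) Note in particular that boundedness from below of $f$, which the paper's proof assumes and which this step genuinely requires, never appears in your proposal.
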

 The following lemma illustrate when the latter infimum could be achieved. This lemma seems classical and it might be found in some convex analyses document.
 \begin{Lem}\label{lemclassic}
Let $f$ be a lower semi-continuous function bounded from below define on a close set $I\subset \RR$. Let $g$ be the largest convex function such that $g\leq f$ on $I$. Then for all affine function $h$, define $I_h:=[a,b]$ be the maximum interval such that $g-h$ reaches its minimum, if $a\neq \infty$, then $a\in I$ and $f(a)=g(a)$, the same conclusion holds for $b$ if $b\neq \infty$.
\end{Lem}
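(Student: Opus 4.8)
My plan is to argue by contradiction with the maximality of $g$: if $a$ is finite but the conclusion fails, I will exhibit a convex function $\bar g$ with $\bar g\leq f$ on $I$ and $\bar g(a)>g(a)$. I treat only $a$, the case of $b$ being symmetric. First the setup. Let $m:=\min_{\RR}(g-h)$ (attained, by hypothesis). Since $g$ is convex and lower semicontinuous — in the present situation this holds by the argument behind Lemma \ref{Lem:convhull} — and $h$ is affine, $g-h$ is convex and lower semicontinuous, so its set of minimizers $\{g-h=m\}$ is a closed interval; this is $I_h=[a,b]$. Put $\hat h:=h+m$; then $\hat h$ is affine, $\hat h\leq g$ on $\RR$ with equality exactly on $[a,b]$, and $\hat h\leq f$ on $I$. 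In particular $g(a)=\hat h(a)$ and $g>\hat h$ on $(-\infty,a)$.

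Suppose first $a\notin I$. Since $g(a)=\hat h(a)<\infty$ we have $a\in\widetilde I:=\mathrm{conv}(I)$; as $I$ is closed and $a\notin I$, the point $a$ lies in a bounded open interval $(c,d)$ with $(c,d)\cap I=\emptyset$, $c<a<d$ and $c,d\in I$. I would let $\bar g$ coincide with $g$ off $(c,d)$ and with the affine chord through $(c,g(c))$ and $(d,g(d))$ on $[c,d]$. Comparing one-sided derivatives at $c$ and $d$ (the chord slope is squeezed between $g'_+(c)$ and $g'_-(d)$ by convexity of $g$) shows $\bar g$ is convex, while $\bar g\geq g$ (convexity of $g$) and $\bar g\leq f$ on $I$, because $I\cap[c,d]=\{c,d\}$. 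Maximality forces $\bar g=g$, so $g$ is affine on $[c,d]$; hence $g-h$ is affine on $[c,d]$ and attains its minimum at the interior point $a$, so $g-h\equiv m$ on $[c,d]$ and $c\in[a,b]$ — contradicting $c<a$.

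Suppose instead $a\in I$ but $g(a)<f(a)$. Lower semicontinuity of $f$ gives $\varepsilon,\delta>0$ with $f\geq\hat h+\varepsilon$ on $I\cap[a-\delta,a+\delta]$. Fix $\delta'\in(0,\delta]$, with also $\delta'<b-a$ when $b>a$. Then $g(a-\delta')>\hat h(a-\delta')$ (as $a-\delta'\notin[a,b]$) and $g(a+\delta')\geq\hat h(a+\delta')$, so the affine function $\ell$ through $(a-\delta',g(a-\delta'))$ and $(a+\delta',g(a+\delta'))$ satisfies $\ell(a)=\tfrac12\big(g(a-\delta')+g(a+\delta')\big)>\tfrac12\big(\hat h(a-\delta')+\hat h(a+\delta')\big)=g(a)$. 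Since $\ell-g$ is concave and vanishes at $a\pm\delta'$, it is nonnegative on $[a-\delta',a+\delta']$, i.e.\ $\ell\geq g$ there; and by continuity of $g$ at $a$ I may shrink $\delta'$ so that $\ell\leq\hat h+\varepsilon$ on $[a-\delta',a+\delta']$. Now set $\bar g:=\ell$ on $[a-\delta',a+\delta']$ and $\bar g:=g$ elsewhere. Then $\bar g$ is convex (the one-sided slopes match up at $a\pm\delta'$, again because a chord slope of $g$ on a subinterval lies between the corresponding one-sided derivatives), $\bar g(a)=\ell(a)>g(a)$, and $\bar g\leq f$ on $I$: off $[a-\delta',a+\delta']$ because $g\leq f$, and on $I\cap[a-\delta',a+\delta']$ because $\ell\leq\hat h+\varepsilon\leq f$. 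Again this contradicts maximality, so $a\in I$ and $f(a)=g(a)$.

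I expect the second case to be the delicate point: the perturbation must be confined to a small window around $a$, where the lower semicontinuity of $f$ provides a uniform margin $\varepsilon$, and one has to check that this localized bump stays globally convex and does not overshoot $f$ outside the window — which works precisely because $a$ is an \emph{endpoint}, not an interior point, of the flat segment $[a,b]$ of $g-h$, so $g$ lies strictly below its chord across $a$. A shorter variant, if one quotes the two-point representation $g(a)=\inf\{\mu f(y_1)+(1-\mu)f(y_2):\mu y_1+(1-\mu)y_2=a,\ y_1,y_2\in I\}$ from Lemma \ref{Lem:convhull}: were this infimum attained at a genuine two-point measure $\mu\delta_{y_1}+(1-\mu)\delta_{y_2}$ with $y_1<a<y_2$, then equality throughout the chain $g(a)=\mu f(y_1)+(1-\mu)f(y_2)\geq\mu g(y_1)+(1-\mu)g(y_2)\geq g(a)$ would make $g$ affine on $[y_1,y_2]$ and hence $g-h\equiv m$ there, forcing $y_1\in[a,b]$ — impossible; so it is attained at $\delta_a$, giving $a\in I$ and $g(a)=f(a)$. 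The cost of this shortcut is establishing the attainment of that infimum (a compactness argument relying on $f$ lower semicontinuous and bounded below and on balls being compact).
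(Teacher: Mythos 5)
Your proof is correct and rests on the same basic mechanism as the paper's --- playing the maximality of $g$ against a chordal perturbation of $g$ localized near $a$ --- but the execution is genuinely different. The paper does not split into the cases $a\notin I$ and $a\in I$: it takes $g_n:=\max\{g,h_n\}$, where $h_n$ is the chord of $g$ over $[a-1/n,a+1/n]$, observes $g_n(a)>g(a)$, and uses maximality of $g$ to extract points $z_n\in I$ with $f(z_n)<g_n(z_n)$; since $g_n=g$ off $[a-1/n,a+1/n]$, these $z_n$ converge to $a$, which gives $a\in I$ (closedness of $I$) and, by squeezing $h(z_n)\leq f(z_n)\leq \max\{g(a-1/n),g(a+1/n)\}\to g(a)$ and invoking lower semicontinuity only at the limit, $f(a)\leq g(a)$. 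So the paper never needs its perturbed function to be admissible; it exploits precisely the failure of admissibility. You instead construct in each case an explicit admissible competitor $\bar g\leq f$ on $I$ with $\bar g(a)>g(a)$, which is why you need the gap structure of the closed set $I$ in your first case and the quantitative lsc margin $f\geq \hat h+\epsilon$ near $a$ in your second. This is somewhat heavier, but it works, and your first case isolates the clean fact that maximality forces $g$ to be affine across any gap of $I$.

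One point to patch: both of your chord constructions presuppose that $g$ is finite at $a-\delta'$ (and at $a+\delta'$ when $b=a$), which fails when $a$ is an endpoint of the convex hull of $I$, where $g\equiv+\infty$ outside the hull. In that degenerate situation, however, $a$ is the minimum or maximum of $I$ (the hull of a closed subset of $\RR$ is closed with endpoints in $I$), so $a\in I$, and the convex-hull representation (as in Lemma \ref{Lem:convhull}, or Jensen applied to any convex minorant) admits only the trivial combination $\delta_a$ at such a point, whence $g(a)=f(a)$ directly; note that the paper's own proof makes the same implicit finiteness assumption when it defines $h_n$ through $(a\pm 1/n, g(a\pm1/n))$. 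Your closing ``shorter variant'' is also sound in outline, but, as you say, it requires proving attainment of the two-point infimum, which Lemma \ref{Lem:convhull} alone does not give.
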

\begin{proof}
 Without loss of generality, we can suppose that $I_h=[a,b]$ with $a\neq \pm \infty$. It is enough to show that $f(a)=g(a)$, the other cases are similar.
The definition of $g$ implies directly that $g(a)\leq f(a)$, so we now turn to prove the inverse inequality.
Changing $h$ into $h+constant$, we can suppose that $g-h=0$ on $I_h$ and $g-h>0$ on $\RR\setminus I_h$. Let $h_n$ the affine function such that $h_n(a-1/n)=g(a-1/n)$ and $h_n(a+1/n)=g(a+1/n)$. By definition of $I_h$, $h_n(a-1/n)>h(a-1/n)$ and $h_n(a+1/n)\geq h(a+1/n)$. It follows that $h_n(a)>h(a)=g(a)$.
Thus, if we define $g_n:x\mapsto \max \{g(x), h_n(x)\}$, then $g_n$ is a convex function greater than $g$. Thus, the definition of $g$ implies that the existence of $z_n\in I$ such that $f(z_n)<g_n(z_n)$. Notice that $g_n= g$ on $\RR\setminus [a-1/n,a+1/n]$, so $z_n\in [a-1/n,a+1/n]$. Hence, $\lim_{n\rightarrow \infty} z_n =a$ and it holds
\begin{align*}
g(a)
&=
h(a)=\lim_{n\rightarrow \infty} h(z_n)\leq \lim_{n\rightarrow \infty} f(z_n) \\
&\leq
\lim_{n\rightarrow \infty} g_n(z_n)\leq \lim_{n\rightarrow \infty} \max \{g(a-1/n),g(a+1/n)\}=g(a)
\end{align*}
Thus, by lower semi-continuity of $f$, we have $g(a)=\lim_{n\rightarrow \infty} f(z_n)\geq f(\lim_{n\rightarrow \infty} z_n)=f(a)$.
The proof is completed.
\end{proof}
 As a consequence of the latter lemma, suppose that the largest affine part contains $(u,\widetilde{f}_x(u))$ is $([a_u,b_u],\widetilde{f}_x([a_u,b_u]))$, if $b_u<\infty$, then we have  $a_u,b_u\in I_x$ and $\widetilde{f}_x(a_u)=f_x(a_u)$, $\widetilde{f}_x(b_u)=f_x(b_u)$. Hence,
$$\widetilde{f}_x(u)=\int_{I_x} f_x(w)\,q(dw),$$
where $q=\lambda\delta_{a_u}+(1-\lambda)\delta_{b_u}$ with $\lambda$ satisfies $u=\lambda a_u+(1-\lambda b_u)$.

Finally, let
\begin{equation} \label{defmt}
\widetilde{m}_f(t,x):= \left\{ u \in \RR^+: {Q}_t\widetilde{f}_x(0)= \widetilde{f}_x(u) +t\alpha\left(\frac{u}{t}\right) \right\}.
\end{equation}
This set is easily seen to be non-empty using the lower semicontinuity of $\widetilde{f}_x$  (see also Item $(ii)$ of the following result.)

\begin{The}\label{the1}
Set $\beta(x):=x\alpha'(x)-\alpha(x)$, $x \geq 0$.
Let $f \colon X \to \RR$ be bounded from below and lower semi-continuous. Then,
\begin{description}
\item{(i)}
For all $t>0$, all $x \in X$, it holds $\tQ_t f(x) = Q_t \widetilde{f}_x(0)$;
\item{(ii)} Assume that the cost function $\alpha$ is strictly increasing, then for all $t>0$ and all $x \in X$, it holds
\begin{equation}\label{eq:mf=mftilde}
\left\{ \int d(x,y)\,p(dy) : p \in {m}_f(t,x) \right\} = \widetilde{m}_f(t,x).
\end{equation}
more generally for all cost function $\alpha$, it holds
$$\left\{ \int d(x,y)\,p(dy) : p \in {m}_f(t,x) \right\}\subset \widetilde{m}_f(t,x)$$
and
$$
\widetilde{m}_f(t,x) \subset \bigcap_{\epsilon>0}\left\{ \int d(x,y)\,p(dy) : p\in m_f^\epsilon(t,x)\right\},
$$
where $m_f^\epsilon(t,x) = \left\{p \in \mathcal{P}(X) : \int f\,dp + t\alpha\left(\frac{\int d(x,y)\,p(dy)}{t}\right) \leq Q_tf(x)+\epsilon \right\}.$\\
In particular, when $X$ is compact,  \eqref{eq:mf=mftilde} holds for all $\alpha$.
\item{(iii)}
For all $x\in X$ and all $t>0$, the function $u\mapsto \beta(u/t)$ is constant on $\widetilde{m}_f(t,x)$. In particular, the function $p\mapsto  \beta \left(\int d(x,y)\,p(dy)/t\right)$ is constant on $m_f(t,x).$
\item {(iv)} For all $t>0$, $x \in X$ and $p \in m_f(t,x)$, it holds
\begin{equation} \label{froid}
\frac{\partial}{\partial t} \tQ_tf(x)= - \beta \left( \frac{\int d(x,y)\,p(dy)}{t} \right)  ;
\end{equation}

\end{description}
\end{The}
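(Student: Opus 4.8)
My plan is to reduce the whole statement to the analysis of one convex function of a real variable. Fix $x\in X$ and, for $t>0$, set $g_t(u):=\widetilde f_x(u)+t\alpha(u/t)$ for $u\in\RR^+$; by construction $\inf_{u\ge0}g_t(u)=Q_t\widetilde f_x(0)$ and the set of minimizers of $g_t$ is exactly $\widetilde m_f(t,x)$. Part (i) identifies $\tQ_tf(x)$ with this one–dimensional minimum, and once it is in place, parts (ii)–(iv) follow from elementary convex analysis of $g_t$ together with Lemmas \ref{Lem:convhull} and \ref{lemclassic} and the possibility of lifting two–point measures on $I_x$ to measures on $X$. For (i) I prove the two bounds separately. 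For $\tQ_tf(x)\ge Q_t\widetilde f_x(0)$: given $p\in\mathcal P(X)$ with finite $u:=\int d(x,y)\,p(dy)$, push $p$ forward under $y\mapsto d(x,y)$ to a measure $q$ on $I_x$ with mean $u$; since $f(y)\ge f_x(d(x,y))\ge\widetilde f_x(d(x,y))$ pointwise and $\widetilde f_x$ is convex, Jensen's inequality gives $\int f\,dp\ge\int\widetilde f_x\,dq\ge\widetilde f_x(u)$, hence $\int f\,dp+t\alpha(u/t)\ge g_t(u)\ge\inf g_t$. For $\tQ_tf(x)\le Q_t\widetilde f_x(0)$: fix $u\in\widetilde I_x$; by Lemma \ref{Lem:convhull}, $\widetilde f_x(u)$ is the infimum over two–point measures $q=\lambda\delta_{w_1}+(1-\lambda)\delta_{w_2}$ on $I_x$ of mean $u$ of $\int f_x\,dq$, and each such $q$ lifts to $p:=\lambda\delta_{y_1}+(1-\lambda)\delta_{y_2}$, where $y_i$ achieves $\min\{f(y):d(x,y)=w_i\}$ — a minimum attained because $\{y:d(x,y)=w_i\}$ is a closed subset of a compact ball and $f$ is l.s.c.; then $p$ has barycentric distance $u$ and $\int f\,dp=\int f_x\,dq$, so $\tQ_tf(x)\le g_t(u)$, and one minimizes over $u$.

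\medskip
For (ii): if $p\in m_f(t,x)$ with barycentric distance $u$, the chain of inequalities in the first half of (i) collapses to equalities, which is exactly $u\in\widetilde m_f(t,x)$; this gives the first inclusion for every $\alpha$. Applying the lifting construction to an $\epsilon$–almost optimal two–point measure for $\widetilde f_x(u)$ with $u\in\widetilde m_f(t,x)$ yields the inclusion $\widetilde m_f(t,x)\subset\bigcap_{\epsilon>0}\{\int d(x,\cdot)\,dp:p\in m_f^\epsilon(t,x)\}$. For the equality when $\alpha$ is strictly increasing: $u\in\widetilde m_f(t,x)$ forces $-\alpha'(u/t)$ to lie in the subdifferential of $\widetilde f_x$ at $u$, so Lemma \ref{lemclassic} applied to the supporting affine function of slope $s:=-\alpha'(u/t)$ produces a contact interval $[a,b]$ with $a\in I_x$, $f_x(a)=\widetilde f_x(a)$, and, if $b<\infty$, also $b\in I_x$, $f_x(b)=\widetilde f_x(b)$; the two–point measure on $\{a,b\}$ of mean $u$ then lifts to a genuine minimizer. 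The only delicate case is $b=\infty$: there $\widetilde f_x$ is affine of slope $s$ on $[a,\infty)$, so $s\ge0$ since $f$ (hence $\widetilde f_x$) is bounded from below, while $s=-\alpha'(u/t)\le0$; thus $s=0$ and $\alpha'(u/t)=0$, and strict monotonicity of $\alpha$ forces $u=0\in I_x$, realized by $\delta_x$. If $X$ is compact, $I_x$ is bounded, $b=\infty$ cannot occur, and no assumption on $\alpha$ is needed. For (iii): if $u<u'$ both lie in $\widetilde m_f(t,x)$, convexity of $g_t$ forces $g_t\equiv\tQ_tf(x)$ on $[u,u']$, so there $\widetilde f_x=\tQ_tf(x)-t\alpha(\cdot/t)$ is simultaneously convex and concave, hence affine; thus $\alpha$ is affine on $[u/t,u'/t]$ and a one–line computation gives $\beta(u/t)=\beta(u'/t)$; the statement about $m_f(t,x)$ then follows from the first inclusion of (ii).

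\medskip
For (iv): by (i), $\tQ_tf(x)=\phi(t):=\inf_uG(t,u)$ with $G(t,u):=\widetilde f_x(u)+t\alpha(u/t)$, and for $t>0$ one computes $\partial_tG(t,u)=\alpha(u/t)-(u/t)\alpha'(u/t)=-\beta(u/t)$. Fix $t_0>0$ and let $u_0:=\int d(x,y)\,p(dy)\in\widetilde m_f(t_0,x)$ for some $p\in m_f(t_0,x)$ (Lemma \ref{mfnonempty} and the first inclusion of (ii)). Using $u_0$ as competitor at time $t$ gives $\phi(t)-\phi(t_0)\le G(t,u_0)-G(t_0,u_0)$, while using a minimizer $u(t)$ at time $t$ as competitor at time $t_0$ gives $\phi(t)-\phi(t_0)\ge G(t,u(t))-G(t_0,u(t))$; hence the difference quotient of $\phi$ at $t_0$ is squeezed between $\frac{G(t,u_0)-G(t_0,u_0)}{t-t_0}$ and $\frac{G(t,u(t))-G(t_0,u(t))}{t-t_0}$, which by the mean value theorem equal $-\beta(u_0/\xi_t)$ and $-\beta(u(t)/\eta_t)$ for some $\xi_t,\eta_t$ between $t$ and $t_0$. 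The first tends to $-\beta(u_0/t_0)$; for the second, the bound $t\alpha(u(t)/t)=\phi(t)-\widetilde f_x(u(t))\le f(x)-\inf f$ together with coercivity of $\alpha$ (the alternative $\alpha\equiv0$ being trivial, with both sides of \eqref{froid} equal to $0$) keeps the $u(t)$ in a compact set as $t\to t_0$, and any subsequential limit of them is again a minimizer of $g_{t_0}$ (a stability argument using continuity of $\widetilde f_x$ on $\widetilde I_x$ from Lemma \ref{Lem:convhull}), hence carries the value $\beta(u_0/t_0)$ by (iii); so the second quotient also tends to $-\beta(u_0/t_0)$, and $\phi$ is differentiable at $t_0$ with $\phi'(t_0)=-\beta(u_0/t_0)$, which is \eqref{froid}.

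\medskip
The step I expect to be the main obstacle is the two–sided envelope estimate in (iv): the inequality coming from the fixed competitor $u_0$ is immediate, but the matching bound requires the barycentric distances of minimizers not to escape to infinity as $t\to t_0$ and their limits to remain minimizers — precisely where coercivity of $\alpha$, continuity of $\widetilde f_x$ on $\widetilde I_x$ and part (iii) all enter. A secondary subtlety, already flagged above, is the unbounded contact interval ($b=\infty$) in the equality statement of (ii), which is resolved using that $f$ is bounded from below.
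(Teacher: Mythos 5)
Your proof is correct, and for items (i)--(iii) it follows essentially the paper's route: reduction to the one-dimensional problem $\inf_{u\geq 0}\{\widetilde f_x(u)+t\alpha(u/t)\}$, the pushforward/Jensen bound for one inequality in (i), Lemma \ref{Lem:convhull} together with the lifting of two-point measures (sphere minima attained by lower semicontinuity and compactness of balls) for the other, Lemma \ref{lemclassic} plus the bounded-from-below argument to dispose of the unbounded contact interval in (ii), and the affine-piece computation for the constancy of $\beta$ in (iii); your use of the supporting line of slope $-\alpha'(u/t)$ instead of the paper's ``largest affine part'' is a cosmetic variation. Two places are genuinely different. First, for the compact case of \eqref{eq:mf=mftilde} the paper runs a Prokhorov/weak-compactness argument in $\mathcal{P}(X)$ (weak limits of $\epsilon$-minimizers are minimizers), whereas you observe that compactness of $X$ makes $\widetilde I_x$ bounded, so the case $b=\infty$ never occurs and the exact two-point lift already produces a minimizer for every $\alpha$; this is simpler and equally valid. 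Second, and more substantially, for (iv) the paper does not reprove differentiability: it invokes \cite[Theorem 1.10]{GRS14} for the one-sided derivatives of $t\mapsto Q_t\widetilde f_x(0)$, namely $-\beta\bigl(\max\widetilde m_f(t,x)/t\bigr)$ and $-\beta\bigl(\min\widetilde m_f(t,x)/t\bigr)$, and concludes by (iii). You instead give a self-contained envelope/squeeze argument: fixed-competitor and moving-minimizer bounds on the difference quotients, the mean value theorem in $t$ (using that $t\mapsto t\alpha(u/t)$ is $C^1$ with derivative $-\beta(u/t)$), the dichotomy that $\alpha$ is either identically zero (trivial case) or coercive, combined with the bound $t\alpha(u(t)/t)\leq f(x)-\inf f$, to confine the minimizers $u(t)$ to a compact set near $t_0$, stability of minimizers as $t\to t_0$ via the continuity of $\widetilde f_x$ on $\widetilde I_x$ from Lemma \ref{Lem:convhull}, and finally (iii) to identify the common limit $-\beta(u_0/t_0)$. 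This costs you the analysis that the paper imports wholesale, but it buys a proof of \eqref{froid} that does not rely on the external metric Hopf--Lax result; the two points you flagged as delicate (compactness of the minimizers and the $b=\infty$ branch) are exactly the right ones and are handled correctly.
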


\begin{proof}[Proof of Theorem \ref{the1}]
Let us prove Item \textit{(i)}.
Fix $f \colon X \to \RR$ bounded from below and lower semi-continuous, and $x \in X$. It holds
\begin{align*}
\tQ_tf(x)
& =
\inf_{p\in \mathcal{P}(X)}\left\{\int f\,dp+t\alpha\left(\frac{\int d(x,y)\,p(dy)}{t}\right)\right\} \\
& =
\inf_{u \in \RR^+} \left\{ g_x(u)+t\alpha\left(\frac{u}{t}\right)\right\},
\end{align*}
where
\[
g_x(u)=\inf \left\{\int f\, dp : p\in \mathcal{P}(X): \int d(x,y)\,p(dy)=u\right\},\qquad u\in \RR^+.
\]
Let us show that $g_x(u) = \widetilde{f}_x(u)$ $u\in \RR^+.$ If $u$ is outside $\widetilde{I}_x$, then both functions are equal to $+\infty$ and there is nothing to prove. Let us show that $g_x=\widetilde{f}_x$ on $\widetilde{I}_x.$ First choosing, in the definition of $g_x$, $p=\delta_y$ for some $y\in X$ such that $d(x,y)=u\in I_x$, one gets that $g_x(u) \leq f(y)$. Optimizing over all $y$ such that $d(x,y)=u$, one concludes that $g_x(u) \leq f_x(u)$ for all $u\in I_x.$
Moreover the function $g_x$ is easily seen to be convex. By definition of the convex hull of $f_x$, it follows that $g_x(u) \leq \widetilde{f}_x(u)$ for all $u\in \widetilde{I}_x.$
Now let us show that $g_x \geq \widetilde{f}_x$. For all $y\in X$, it holds $f(y) \geq f_x(d(x,y))$. Therefore, if $p$ is such that $\int d(x,y)\,p(dy)=u \in \widetilde{I}_x$, then denoting by $\widetilde{p}\in \mathcal{P}_u(I_x)$ the image of $p$ under the map $y\mapsto d(x,y)$, it holds
\begin{equation}\label{eq:pfthe1}
\int f(y)\,p(dy) \geq \int f_x(d(x,y))\,p(dy) = \int f_x(v)\,\widetilde{p}(dv) \geq \int \widetilde{f}_x(v)\,\widetilde{p}(dv) \geq \widetilde{f}_x(u),
\end{equation}
where the last inequality follows from Jensen inequality.
Optimizing over $p$, yields to $g_x \geq \widetilde{f}_x$ on $\widetilde{I}_x$ and so $g_x=\widetilde{f}_x$ and this completes the proof.
\medskip

Now, we prove Item $(ii)$. Let $p \in m_f(t,x)$ and $u=\int d(x,y)\,p(dy)$. Then, according to \eqref{eq:pfthe1}, one has $\widetilde{f}_x(u) \leq \int f\,dp$.
Hence, using the very definition of $m_f(t,x)$, Item $(i)$ and the definition of $Q_t \widetilde{f}_x(0)$, it holds
\begin{align*}
\widetilde{f}_x(u) + t \alpha \left( \frac{u}{t} \right)
& \leq
\int f\,dp + t \alpha \left( \frac{\int d(x,y)\,p(dy)}{t} \right)
 =
\tQ_t f(x)
 =
Q_t \widetilde{f}_x(0) \\
& \leq
\widetilde{f}_x(u) + t \alpha \left( \frac{u}{t} \right)
\end{align*}
It follows that $Q_t \widetilde{f}_x(0) = \widetilde{f}_x(u) + t \alpha \left( \frac{u}{t} \right)$ and thus that $u \in \widetilde{m}_f(t,x)$
which, in turn, guarantees that $\left\{ \int d(x,y)\,p(dy) : p \in m_f(t,x) \right\} \subset \widetilde{m}_f(t,x)$.

Conversely, let $u \in \widetilde{m}_f(t,x)$.
 Firstly assume that the cost function $\alpha$ is strictly increasing. If $u=0$, then it suffice to take $p=\delta_0$ and it is easy to see that $p \in m_f(t,x)$. Now suppose that $u>0$. Let $([a_u,b_u], \widetilde{f}_x([a_u,b_u]))$ be the largest affine part of the graph $\widetilde{f}_x$ which contains $(u,\widetilde{f}_x(u))$. If $b_u<\infty$, then  thanks to lemma\ref{lemclassic} $f_x(a_u)=\widetilde{f}_x(a_u)$ and $f_x(b_u)=\widetilde{f}_x(b_u)$. As a consequence, there exist $y_1$ and $y_2$ such that $f_x(a_u)=f(y_1)$ and $f_x(b_u)=f(y_2)$, $d(x,y_1)=a_u$, $d(x,y_2)=b_u$. It is suffice to define $p:=\lambda \delta_{y_1}+(1-\lambda)\delta_{y_2}$ where $\lambda$ satisfies $\lambda a_u+(1-\lambda) b_u=u$. Moreover, by Item $(i)$ and by definition of $\widetilde{m}_f(t,x)$ we have
$$
\tQ_t f(x)
 =
Q_t \widetilde{f}_x(0)
=
\widetilde{f}_x(u) + t \alpha \left( \frac{u}{t} \right)
=
\int f\,dp+ t \alpha \left( \frac{\int d(x,y)\,p(dy)}{t} \right) \geq \tQ f(x)
$$
which proves that $p \in m_f(t,x)$ and thus that $u \in \left\{ \int d(x,y)\,p(dy) : p \in m_f(t,x)\right\}$.

Now we turn to the case $b_u=\infty$.  Let $h$ be the affine function which is coincide with $\widetilde{f}_x$ on $[a_u, \infty)$. Since $\widetilde{f}_x$ is bounded from below, so is $h$. It follows that $h'\geq 0$. Hence, $z\mapsto \widetilde {f}_x(z)+t\alpha(z/t)$ is strictly increasing on $[a_u,\infty)$. On the other hand, $u\in \widetilde{m}_f(t,x)$ implies that $u$ achieves the minimum of function $z\mapsto \widetilde {f}_x(z)+t\alpha(z/t)$. Thus $u=a_u$ and there exists $y\in X$ such that $d(x,y)=u$ and $f(y)=f_x(u)= \widetilde{f}_x(u)$ by lemma \ref{lemclassic}. Again by Item $(i)$ and by definition of $\widetilde{m}_f(t,x)$ we deduce that the probability $p:=\delta_y \in m_f(t,x)$ and $u\in \left\{ \int d(x,y)\,p(dy) : p \in m_f(t,x)\right\}$.

Now we turn to prove the general case:
According to \eqref{eq:convhull}, for all $\epsilon>0$, there exists $q^\epsilon \in \mathcal{P}_u(I_x)$ charging at most two points such that $\int f_x(v)\,q^\epsilon(dv) \leq \widetilde{f}_x(u) + \epsilon$. For any $v$ in the support of $q^\epsilon$, there exists $y_v \in X$ such that $d(x,y_v)=v$ and $f(y_v) = f_x(v)$ (here we use the facts that $f$ is lower-semicontinuous and balls are compact). Define $p^\epsilon=\sum_{v \in \mathrm{Supp}(q^\epsilon)} q^\epsilon(\{v\})\delta_{y_v}$. By construction, it holds $\int d(x,y)\,p^\epsilon(dy) = \int v\,q^\epsilon(dv)=u$ and $\int f(y)\,p^\epsilon(dy) = \int f_x(v)\,q^\epsilon(dv).$ Moreover, by Item $(i)$ and by definition of $\widetilde{m}_f(t,x)$ we have
$$
\tQ_t f(x)
 =
Q_t \widetilde{f}_x(0)
=
\widetilde{f}_x(u) + t \alpha \left( \frac{u}{t} \right)
=
\int f\,dp^\epsilon + t \alpha \left( \frac{\int d(x,y)\,p^\epsilon(dy)}{t} \right)-\epsilon \geq \tQ f(x)-\epsilon
$$
which proves that $p \in m_f^\epsilon(t,x)$ and thus that $u \in \left\{ \int d(x,y)\,p(dy) : p \in m_f^\epsilon(t,x) \right\}$.
So it holds $\widetilde{m}_f(t,x) \subset \bigcap_{\epsilon>0} \{\int d(x,y)\,p(dy) : p \in m^\epsilon_f(t,x)\}:=A(t,x).$

Now, let us assume that $X$ is compact, and let us show that the set $A(t,x)=\{\int d(x,y)\,p(dy) : p \in m_f(t,x)\}$. Let $u \in A(t,x)$ and $\epsilon_n $ be a sequence of positive numbers tending to $0$ ; then there exists a sequence $p_n \in m_f^{\epsilon_n}(t,x)$ such that $u=\int d(x,y)\,p_n(dy)$. According to Prokhorov Theorem, $\mathcal{P}(X)$ is compact, therefore one can assume without loss of generality that $p_n$ converges weakly to some $p^*$. Since $X$ is compact, the function $y\mapsto d(x,y)$ is bounded and continuous and therefore the functional $p\mapsto \int d(x,y)\,p(dy)$ is continuous. One concludes that $\int d(x,y)\,p^*(dy) =u$. Now let us show that $p^* \in m_f(t,x)$. Since $f$ is lower semicontinuous $\liminf_{n\to \infty}\int f\,dp_n \geq \int f\,dp^*.$ Since $p_n \in m_f^{\epsilon_n}(t,x)$, letting $n\to\infty,$ one concludes that $\int f\,dp^* + t\alpha\left(\frac{\int d(x,y)\,p^*(dy)}{t}\right) \leq \widetilde{Q}_tf(x)$ and so $p^* \in m_f(t,x).$
This ends the proof of Item $(ii)$.
\medskip

Let us prove Item $(iii)$. By definition, $\widetilde{m}_f(t,x)$ is the set where the convex function $F(v)=\widetilde{f}_x(v) + t\alpha (v/t)$ attains its minimum on $\RR^+.$ Therefore $\widetilde{m}_f(t,x)$ is an interval. Suppose that $u_1 < u_2$ are in $\widetilde{m}_f(t,x)$, then $F$ is constant on $[u_1,u_2]$. Since both functions $\widetilde{f}_x$ and  $t\alpha (\,\cdot\,/t)$ are convex, this easily implies that these functions $\widetilde{f}_x$ and $t\alpha (\,\cdot\,/t)$ are both affine on $[u_1,u_2].$ In particular, $\alpha'(u/t)$ is constant on $[u_1,u_2].$ It follows that $\beta(u_2/t) = (u_2/t)\alpha'(u_2/t)-\alpha(u_2/t) = (u_2/t)\alpha'(u_1/t) - \alpha(u_1/t) - \alpha'(u_1/t)(u_2-u_1)/t = \beta(u_1/t)$. This shows that $\beta(\,\cdot\,/t)$ is constant on $\widetilde{m}_f(t,x).$
\medskip

Let us turn to the proof of Item $(iv)$. According to \cite[Theorem 1.10]{GRS14} (which applies since $\widetilde{f}_x : \RR \to \RR\cup\{+\infty\}$ is bounded from below and, according to Lemma \ref{Lem:convhull}, lower-semicontinuous), it holds
\[
\frac{dQ_t \widetilde{f}_x(0)}{dt_+} = - \beta \left(\frac{\max \widetilde{m}_f(t,x)}{t}\right),
\]
and
\[
\frac{dQ_t \widetilde{f}_x(0)}{dt_-} = - \beta \left(\frac{\min \widetilde{m}_f(t,x)}{t}\right),
\]
where $d/dt_\pm$ stands for the right and left derivatives.
According to Item $(iii)$ the function $\beta(\,\cdot\, / t)$ is constant on $\widetilde{m}_f(t,x)$. Therefore, the left and the right derivatives of $t\mapsto Q_t \widetilde{f}_x(0)$ are equal, and so the function is actually differentiable in $t$.
According to Item $(i)$, $\tQ_tf(x)=Q_t\widetilde{f}_x(0)$ and, according to Item $(ii)$, $\{\int d(x,y)\,p(dy) : p \in m_f(t,x)\}\subset \widetilde{m}_f(t,x)$ which proves \eqref{froid}.
\end{proof}
Let us mention an interesting consequence of the proof of Item $(ii)$. Let us denote by $\mathcal{P}_2(X)$ the set of probability measures on $X$ charging at most two points:
$$
\mathcal{P}_2(X) := \left\{ (1-s) \delta_x + s \delta_y : \; s \in [0,1], \; x,y \in X \right\} .
$$
\begin{Pro}
Let $f:X\to \RR$ be a lower semicontinuous function bounded from below. Then
\[
\tQ_tf(x) = \inf\left\{ \int f\,dp + t\alpha\left(\frac{\int d(x,y)\,p(dy)}{t}\right) : p \in \mathcal{P}_2(X)\right\}.
\]
\end{Pro}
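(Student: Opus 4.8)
The plan is to prove the two inequalities separately. One of them is immediate: since $\mathcal{P}_2(X)\subset\mathcal{P}(X)$, restricting the infimum defining $\tQ_tf(x)$ to $\mathcal{P}_2(X)$ can only make it larger, so $\tQ_tf(x)\leq \inf\{\int f\,dp + t\alpha(\int d(x,y)\,p(dy)/t) : p\in\mathcal{P}_2(X)\}$. The whole content is therefore the reverse inequality, and here I would reuse, essentially verbatim, the construction already appearing in the proof of Item $(ii)$ of Theorem~\ref{the1}.

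First I would invoke Theorem~\ref{the1}$(i)$ to rewrite $\tQ_tf(x)=Q_t\widetilde{f}_x(0)=\inf_{u\in\RR^+}\{\widetilde{f}_x(u)+t\alpha(u/t)\}$, and pick $u\in\widetilde{m}_f(t,x)$, which is non-empty (as recalled just before Theorem~\ref{the1}, using the lower semicontinuity of $\widetilde{f}_x$ established in Lemma~\ref{Lem:convhull}); note that $u<\infty$ and $u\in\widetilde{I}_x$, so $\widetilde{f}_x(u)<\infty$, and that by the very definition of $\widetilde{m}_f(t,x)$ one has $\tQ_tf(x)=Q_t\widetilde{f}_x(0)=\widetilde{f}_x(u)+t\alpha(u/t)$. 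Next, fix $\epsilon>0$ and apply the two–point variational formula \eqref{eq:convhull} of Lemma~\ref{Lem:convhull} to obtain a probability measure $q^\epsilon\in\mathcal{P}_u(I_x)$ charging at most two points of $I_x$ with $\int_{I_x} f_x\,dq^\epsilon\leq \widetilde{f}_x(u)+\epsilon$.

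Then I would lift $q^\epsilon$ to a measure on $X$. For each point $v$ in the (at most two–point) support of $q^\epsilon$, which lies in $I_x$, the sphere $\{y\in X : d(x,y)=v\}$ is a closed subset of the compact ball $\overline{B}(x,v)$, hence compact, so by lower semicontinuity of $f$ the minimum defining $f_x(v)$ is attained at some $y_v$ with $d(x,y_v)=v$ and $f(y_v)=f_x(v)$. Setting $p^\epsilon:=\sum_{v\in\mathrm{Supp}(q^\epsilon)} q^\epsilon(\{v\})\,\delta_{y_v}\in\mathcal{P}_2(X)$, one gets $\int d(x,y)\,p^\epsilon(dy)=\int v\,q^\epsilon(dv)=u$ and $\int f\,dp^\epsilon=\int f_x\,dq^\epsilon\leq\widetilde{f}_x(u)+\epsilon$, whence
\[
\int f\,dp^\epsilon+t\alpha\!\left(\frac{\int d(x,y)\,p^\epsilon(dy)}{t}\right)\leq \widetilde{f}_x(u)+t\alpha\!\left(\frac{u}{t}\right)+\epsilon = \tQ_tf(x)+\epsilon .
\]
Letting $\epsilon\to 0$ yields $\inf\{\int f\,dp + t\alpha(\int d(x,y)\,p(dy)/t) : p\in\mathcal{P}_2(X)\}\leq\tQ_tf(x)$, which combined with the trivial inequality completes the proof.

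The only step needing care — the point I would flag as the main (mild) obstacle — is the lifting step: one must know that $q^\epsilon$ is supported on $I_x$, so each of its support points is an actually realized distance, and that the infimum defining $f_x$ is attained. Both are guaranteed precisely by the standing assumptions that $(X,d)$ is Polish with compact closed balls and that $f$ is lower semicontinuous and bounded below. No semigroup or differentiability property is involved; the statement is a direct corollary of Item $(i)$ of Theorem~\ref{the1} together with the two–point characterization of the convex hull in Lemma~\ref{Lem:convhull}.
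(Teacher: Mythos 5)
Your proposal is correct and follows essentially the same route as the paper: the paper's proof of this Proposition simply points back to the construction in the proof of Item $(ii)$ of Theorem \ref{the1}, namely picking $u\in\widetilde{m}_f(t,x)$, invoking the two-point formula \eqref{eq:convhull} of Lemma \ref{Lem:convhull}, and lifting the two-point measure $q^\epsilon$ on $I_x$ to a measure $p^\epsilon\in\mathcal{P}_2(X)$ via the attainment of $f_x$ (lower semicontinuity of $f$ plus compact balls), exactly as you do. Your write-up just makes explicit the $\epsilon\to 0$ bookkeeping and the trivial inclusion $\mathcal{P}_2(X)\subset\mathcal{P}(X)$ that the paper leaves implicit.
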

\proof
It is enough to show that for all $\epsilon>0$, $m_f^\epsilon(t,x)\cap \mathcal{P}_2(X)\neq \emptyset$ (recall the definition of $m^\epsilon(t,x)$ given in Item $(ii)$ of Theorem \ref{the1}). Actually, this follows immediately from the argument given in the proof of Item $(ii)$. Indeed, we showed there that for all $u \in \widetilde{m}_f(t,x)$ there exists $p \in \mathcal{P}_2(X)\cap m^\epsilon_f(t,x)$ such that $\int d(x,y)\,p(dy) = u.$
\endproof

Now let us prove Lemmas \ref{mfnonempty} and \ref{Lem:convhull}.

\begin{proof}[Proof of Lemma \ref{mfnonempty}]
Since $f$ is lower semicontinuous and bounded from below, the function $p \mapsto \int f\,dp$ is lower semicontinuous with respect to the weak convergence topology of $\mathcal{P}(X)$. For the same reason $p\mapsto \int d(x,y)\,dp$ is also lower semicontinuous. Therefore, the function $F(p)= \int f\,dp +t\alpha\left(\frac{\int d(x,y)\,p(dy)}{t}\right)$ is lower semi continuous on $\mathcal{P}(X)$. The function $F$ is also bounded from below by $m=\inf_X f$. Moreover its sub-level sets are compact. Indeed, for all $r\geq m$, it holds
\[
\{F\leq r\} \subset \left\{ p \in \mathcal{P}(X) : \int d(x,y)\,p(dy) \leq C_{t,r}\right\},\quad \text{with}\quad C_{t,r}=t\alpha^{-1}\left(\frac{r-m}{t}\right).
\]
In particular, if $p\in \{F\leq r\}$, then $p(B(x,R)^c) \leq C_{t,r}R^{-1}$, for all $R>0$. Since balls in $X$ are assumed to be compact, the compactness of $\{F\leq r\}$ follows from Prokhorov theorem. Since $F$ is lower semicontinuous, bounded from below and has compact sub-level sets, $F$ attains its minimum and so $m_f(t,x)$ is not empty.
\end{proof}

\begin{proof}[Proof of Lemma \ref{Lem:convhull}]
Fix $f \colon X \to \RR$ bounded from below and lower semicontinuous, $x \in X$ and $u \in \RR^+$.
According to \textit{e.g.} \cite{HUL}[Proposition B.2.5.1],
\[
\widetilde{f}_x(u)=\inf \left\{\int_{I_x} f_x(w)\,q(dw) : q\in \mathcal{P}_u(I_x)\text{ with finite support}\right\}.
\]
Applying Caratheodory's Theorem (see \textit{e.g.} \cite{HUL}[Theorem A.1.3.6]), ones sees that one can assume that the infimum is over probability measures $q$ charging at most three points.
Let us explain how to reduce to two points.

Fix $\epsilon>0$ ; there exist $w_1,w_2,w_3 \in I_x$, and
$\lambda_1,\lambda_2,\lambda_3 \in [0,1]$ with $\sum_i \lambda_i=1$
such that $u=\lambda_1 w_1 + \lambda_2w_2 +\lambda_3 w_3$ and
$$\widetilde{f}_x(u)\geq \lambda_1f_x(w_1)+\lambda_2f_x(w_2)+\lambda_3f_x(w_3) -\epsilon$$
Without loss of generality we can assume that $w_1<w_2<w_3$, and for example that $w_1\leq u\leq w_2$ (the other case is similar).
Then there exist $a,b \in[0,1]$ such that
$u=aw_1+(1-a)w_2=bw_1+(1-b)w_3$. Then it is not difficult to check that there is a unique $\lambda \in [0,1]$ such that
$\lambda_1 = \lambda a +(1-\lambda)b$, $\lambda_2 = \lambda (1-a)$ and $\lambda_3 = (1-\lambda)(1-b)$.
Therefore it holds
$u=(\lambda a+(1-\lambda) b) w_1+\lambda(1-a)w_2+(1-\lambda)(1-b)w_3$
and
$$
\widetilde {f}_x(u)\geq (\lambda a+(1-\lambda) b) f_x(w_1)+\lambda(1-a)f_x(w_2)+(1-\lambda)(1-b)f_x(w_3)-\epsilon.
$$
By definition of $\widetilde{f}_x(u)$, necessarily,
$$
\widetilde {f}_x(u)\leq\min_{s \in [0,1]} \{(s a+(1-s) b) f_x(w_1)+s(1-a)f_x(w_2)+(1-s)(1-b)f_x(w_3) \}.
$$
Since, in the right hand side of the latter, the function of $s$ that needs to be minimized is an affine function, the minimum is
reached at $s=0$ or $s = 1$.
Therefore
$$
\widetilde {f}_x(u)\geq \min\{af_x(w_1)+(1-a)f_x(w_2), bf_x(w_1)+(1-b)f_x(w_3)\}-\epsilon
$$
which proves that, for all $\epsilon>0$, there exists $q \in \mathcal{P}_2(I_x)$ such that $\int v\,q(dv)=u$ and
$\int f_x(v)\,q(dv) \geq \widetilde{f}_x(u)\geq \int_{I_x} f_x(v)\,q(dv)-\epsilon$. Since $\epsilon>0$, this completes the proof.
\medskip

Now let us prove that $\widetilde{f}_x$ is continuous on $\widetilde{I}_x.$
By definition, $\widetilde{f}_x$ is a convex function on the closed interval $\widetilde{I}_x$, thus it is continuous on the interior of $\widetilde{I}_x$.
Hence it only  remains to prove that $\widetilde{f}_x$ is continuous at $0$ and, in case $I_x$ is bounded, at $b=\max I_x.$
We only give the proof of the continuity at $0$, the other case is similar.

Take $x_o \in X\setminus\{x\}$ and let $u_o=d(x,x_o)\in I_x\setminus\{0\}$. Since $\widetilde{f}_x$ is convex, on $\widetilde{I}_x$, it holds, for all $0\leq u \leq u_o$
\[
\widetilde{f}_x(u) = \widetilde{f}_x \left( \frac{u}{u_o}.u_o + (1-\frac{u}{u_o}).0 \right) \leq \frac{u}{u_o} \widetilde{f}_x (u_o) + \left(1-\frac{u}{u_o}\right) \widetilde{f}_x(0).
\]
Thus letting $u \to 0^+$, one gets that $\limsup_{u\to0^+} \widetilde{f}_x (u) \leq \widetilde{f}_x(0).$
Now, we prove that $\liminf_{u\rightarrow 0^+} \widetilde{f}_x(u) \geq \widetilde{f}_x(0)$.
 Thanks to the lower semicontinuity of $f$, for all $\epsilon \in (0,1)$, there exists $\eta$, for all $y\in B(x,\eta)$, $f(y)\geq f(x)-\epsilon$. Thus, from the definition of $f_x$, it follows that for all $u\in [0,\eta)$,
$$
f_x(u)\geq f_x(0)-\epsilon.
$$
On the other hand, if $m$ is a lower bound for $f$, then $f_x(u) \geq m$ for all $u\in [0,\infty)$.
Therefore, it holds
\[
f_x(u) \geq (f_x(0)-\epsilon)\mathbf{1}_{[0,\eta)}(u) + m \mathbf{1}_{[\eta,\infty)} := g_\epsilon(u),\qquad \forall u\in [0,\infty),
\]
(here we use that by definition $f_x(u) = +\infty$ when $u\notin I_x$).
Taking a smaller $m$ if necessary, one can assume that $f_x(0)-\epsilon > m$ for all $\epsilon \in (0,1)$. Now consider, the affine function $h_\epsilon$ joining $(0,f_x(0)-\epsilon)$ to $(\eta, m)$. It is clear that $g_\epsilon \geq h_\epsilon$ on $[0,\infty).$ Therefore, by definition of $\widetilde{f}_x$ as the greatest convex function below $f_x$, it holds $\widetilde{f}_x \geq h_\epsilon$ on $[0,\infty).$ In particular,
\[
\liminf_{u \to 0^+} \widetilde{f}_x(u) \geq \liminf_{u \to 0^+} h_\epsilon(u) = f_x(0)-\epsilon.
\]
Since $\epsilon$ is arbitrary, one concludes that $\liminf_{u \to 0^+} \widetilde{f}_x(u) \geq f_x(0) \geq \widetilde{f}_x(0).$ In conclusion, $\lim_{u \to 0^+} \widetilde{f}_x(u) = \widetilde{f}_x(0)= f_x(0),$ which completes the proof.
\end{proof}

%
%
%

\subsection{Properties of the gradient $\widetilde \nabla$}

In this section we collect some useful facts on the gradient $\widetilde \nabla$. Our first result is some sort of chain rule formula for $\widetilde \nabla$.

\begin{Pro}\label{propnabla}
Let $f \colon X \mapsto \RR$ and $G \colon f(X) \mapsto \RR$.
\begin{description}
\item $(i)$ If $G$ is non-decreasing then
$|\widetilde \nabla G\circ f|(x)\leq |\widetilde \nabla f|(x)|\widetilde \nabla G|\left(f(x)\right)$, $x\in X$.
\item$(ii)$ If $G$ is non-increasing then
$|\widetilde \nabla G\circ f|(x)\leq |\widetilde \nabla (-f)|(x)|\widetilde \nabla G|\left(f(x)\right)$,
$x\in X$.
\end{description}
Here, $|\widetilde \nabla G|(u):=\sup_{v \in \RR} \frac{[G(v)-G(u)]_-}{|v-u|}$, $u \in \RR$, with $|\cdot|$ being the absolute value.
\end{Pro}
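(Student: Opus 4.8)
The plan is to prove both inequalities via a single pointwise estimate on difference quotients, followed by a supremum. Fix $x\in X$. For every $y\in X$ I will bound the quotient $[G(f(y))-G(f(x))]_-/d(x,y)$ by $|\widetilde\nabla f|(x)\,|\widetilde\nabla G|(f(x))$ in case $(i)$ (resp. by $|\widetilde\nabla(-f)|(x)\,|\widetilde\nabla G|(f(x))$ in case $(ii)$) and then take the supremum over $y\in X$, which by definition is $|\widetilde\nabla G\circ f|(x)$.

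First I would dispose of the degenerate cases. If $G(f(y))\geq G(f(x))$ the numerator vanishes and the bound is trivial; this already handles $y=x$, every $y$ with $f(y)=f(x)$, and, via the convention $0/0=0$, any $y$ with $d(x,y)=0$. So it remains to treat $y$ with $G(f(y))<G(f(x))$. Such a $y$ necessarily satisfies $f(y)\neq f(x)$, hence $d(x,y)>0$ and $|f(y)-f(x)|>0$, so dividing by $|f(y)-f(x)|$ below is legitimate.

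For $(i)$: since $G$ is non-decreasing, $G(f(y))<G(f(x))$ forces $f(y)<f(x)$, so that $[f(y)-f(x)]_-=|f(y)-f(x)|=f(x)-f(y)$. I would then factor
$$\frac{[G(f(y))-G(f(x))]_-}{d(x,y)}=\frac{[G(f(y))-G(f(x))]_-}{|f(y)-f(x)|}\cdot\frac{[f(y)-f(x)]_-}{d(x,y)}\leq |\widetilde\nabla G|(f(x))\cdot|\widetilde\nabla f|(x),$$
the first factor being at most $|\widetilde\nabla G|(f(x))$ by its definition (take $v=f(y)$) and the second at most $|\widetilde\nabla f|(x)$ by its definition. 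Taking the supremum over $y$ gives $(i)$. For $(ii)$: since $G$ is non-increasing, $G(f(y))<G(f(x))$ forces $f(y)>f(x)$, so that $[(-f)(y)-(-f)(x)]_-=|f(y)-f(x)|=f(y)-f(x)$; the identical factorization, now with the term $[(-f)(y)-(-f)(x)]_-/d(x,y)$ in place of $[f(y)-f(x)]_-/d(x,y)$, bounds the quotient by $|\widetilde\nabla G|(f(x))\,|\widetilde\nabla(-f)|(x)$, and a supremum over $y$ finishes $(ii)$.

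I do not foresee a genuine obstacle here: the entire content is the observation that the monotonicity direction of $G$ determines the sign of $f(y)-f(x)$ precisely on the set where $[G(f(y))-G(f(x))]_-$ is nonzero, which is exactly what makes the correct one-sided difference quotient of $f$ (in $(i)$) or of $-f$ (in $(ii)$) appear on the right. The only things to keep track of are the degenerate cases absorbed by the convention $0/0=0$ and the fact that $G(f(y))\neq G(f(x))$ already guarantees $f(y)\neq f(x)$, so that no division by zero occurs.
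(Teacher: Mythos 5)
Your proof is correct and follows essentially the same route as the paper: on the set where $[G(f(y))-G(f(x))]_-$ is nonzero, the monotonicity of $G$ pins down the sign of $f(y)-f(x)$, and the quotient factors into a difference quotient of $G$ at $f(x)$ times the appropriate one-sided difference quotient of $f$ (or $-f$). Your treatment of the degenerate cases is slightly more explicit than the paper's, but the argument is the same.
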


\begin{proof}
Fix $x \in X$ and assume that $G$ is non-decreasing. Let $y\in X$ be such that $f(x) > f(y)$ (if $\{y \in X : f(x) > f(y)\} = \emptyset$ then $|\widetilde \nabla G\circ f|(x) = |\widetilde \nabla f|(x) = 0$ and there is nothing to prove). Since $G$ is non-decreasing $G(f(x))\geq G( f(y))$ so that
\begin{align*}
\frac{G\left(f(x)\right)-G\left(f(y)\right)}{d(x,y)}
\leq
 \frac{f(x)-f(y)}{d(x,y)}\frac{G\left(f(x)\right)-G\left(f(y)\right)}{f(x)-f(y)}
\leq
 |\widetilde \nabla f|(x)|\widetilde \nabla G|\left(f(x)\right).
\end{align*}
Taking the supremum over all $y$ such that $f(x)> f(y)$ leads to the desired conclusion of Item $(i)$.

The proof of Item $(ii)$ is similar. Let $y \in X$ be such that $f(y)>f(x)$, then
$G( f(y))\leq G(f(x))$ (since $G$ is non-increasing) so that
\begin{align*}
\frac{G\left(f(x)\right)-G\left(f(y)\right)}{d(x,y)}
&=
 \frac{(-f)(x)-(-f)(y)}{d(x,y)}\frac{G\left(f(x)\right)-G\left(f(y)\right)}{|f(y)-f(x)|}\\
&\leq
 |\widetilde \nabla (-f)|(x)|\widetilde \nabla G|\left(f(x)\right) .
\end{align*}
The result follows by taking the supremum over all $y \in X$ such that $f(y)>f(x)$.
\end{proof}

\begin{Rem} \label{remark}
Observe that $|\widetilde \nabla (C f)|(x)=C|\widetilde \nabla f|(x)$ for $C>0$, while
$|\widetilde \nabla (C f)|(x)=-C|\widetilde \nabla (-f)|(x)$ for $C<0$. Because of the negative part entering in its definition, in general $|\widetilde \nabla (-f)| \neq |\widetilde \nabla f|$.
\end{Rem}

The next proposition gives some results on the action of the gradient $\widetilde \nabla$ onto the operator $\widetilde Q_t$ and relates the gradient of $f$ to the usual derivative of $\widetilde f$.

\begin{Pro}\label{eq6}
Let $f$ be a lower semi-continuous function bounded from below.
\begin{description}
\item{(i)}
For all $x\in X$, all $t>0$ and all
$p\in m_f(t,x)$, it holds
\begin{equation}
|\widetilde{\nabla}\tQ_tf|(x)\leq \alpha' \left(\frac{\int d(x,y)\,p(dy)}{t} \right).
\end{equation}
\item{(ii)}
Assume that $f$ reaches its minimum at a unique point $x_o \in X$, then for all $x\in X\setminus\{x_o\}$, it holds
\begin{equation}\label{fxu2pro}
|\widetilde{\nabla}f|(x)=|\widetilde{f}_x^{\,\, '}(0)|,
\end{equation}
and $|\widetilde{\nabla}f|(x_o)=0$. Moreover, if $f$ reaches its minimum in two or more points, or if $f$ does not reach its minimum, then \eqref{fxu2pro} holds for all $x \in X$.
\end{description}
\end{Pro}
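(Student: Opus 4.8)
The plan is to establish the two items separately; in both cases I would pass through the one-dimensional description of $\tQ_t$ given by Theorem~\ref{the1}.

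For item (i), I would fix $p\in m_f(t,x)$, set $u:=\int d(x,y)\,p(dy)$, and recall from Theorem~\ref{the1}(ii) that $u\in\widetilde m_f(t,x)$, so that $u$ minimises $r\mapsto\widetilde f_x(r)+t\alpha(r/t)$ on $\RR^+$ and, by Theorem~\ref{the1}(i), $\tQ_tf(x)=\widetilde f_x(u)+t\alpha(u/t)$. Since $\alpha\in\mathcal C^1$, the first-order optimality of $u$ (the sum rule for subdifferentials if $u>0$, and a direct argument if $u=0$, where $\alpha'(0)=0$ makes $\widetilde f_x$ non-decreasing) gives the supporting-line bound $\widetilde f_x(r)\geq\widetilde f_x(u)-\alpha'(u/t)(r-u)$ for all $r\geq0$. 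Combined with $f(z)\geq f_x(d(x,z))\geq\widetilde f_x(d(x,z))$ for every $z\in X$, this produces the affine minorant $f(z)\geq\widetilde f_x(u)+u\alpha'(u/t)-\alpha'(u/t)\,d(x,z)$ on $X$. Next, for an arbitrary $y\in X$ and an arbitrary competitor $q\in\mathcal P(X)$ with $v:=\int d(y,z)\,q(dz)$, I would integrate this minorant against $q$, use $d(x,z)\leq d(x,y)+d(y,z)$ together with $\alpha'\geq0$, and bound $t\alpha(v/t)-v\,\alpha'(u/t)\geq-t\,\alpha^*(\alpha'(u/t))=-t\beta(u/t)$ by Young's inequality; after simplifying with $u\alpha'(u/t)-t\beta(u/t)=t\alpha(u/t)$ this yields
\[
\int f\,dq+t\alpha(v/t)\ \geq\ \tQ_tf(x)-\alpha'(u/t)\,d(x,y).
\]
Taking the infimum over $q$ gives $\tQ_tf(y)\geq\tQ_tf(x)-\alpha'(u/t)\,d(x,y)$ for every $y$, hence $[\tQ_tf(y)-\tQ_tf(x)]_-\leq\alpha'(u/t)\,d(x,y)$, and dividing by $d(x,y)$ and taking the supremum over $y$ is exactly the assertion. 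The hard point is that $\tQ_tf(y)$ is an \emph{infimum}, so it must be controlled from below uniformly in $q$; the device that makes this possible is to replace $f$ by the globally defined affine function coming from the supporting line of $\widetilde f_x$ at the optimal transport value $u$.

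For item (ii), I would compare two elementary formulas. Because $f$ is lower semi-continuous and closed balls are compact, the minimum in $f_x(w)=\min_{d(x,y)=w}f(y)$ is attained, so grouping $y$'s by the value of $d(x,y)$ gives $|\widetilde\nabla f|(x)=\sup_{w\in I_x,\,w>0}\frac{[f_x(w)-f(x)]_-}{w}$. On the other hand $\widetilde f_x(0)=f_x(0)=f(x)$ and $\widetilde f_x'(0)=\inf_{w\in I_x,\,w>0}\frac{f_x(w)-f(x)}{w}=:A$: the two-point competitor on $\{0,w\}$ gives $\widetilde f_x(u)\leq(1-u/w)f(x)+(u/w)f_x(w)$, whence $\widetilde f_x'(0^+)\leq A$, while the affine map $w\mapsto f(x)+Aw$ lies below $f_x$ on $[0,\infty)$ by definition of $A$ and hence below its convex hull $\widetilde f_x$, whence $\widetilde f_x'(0^+)\geq A$. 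Then I would compare $|\widetilde\nabla f|(x)$ with $|A|$ case by case: if $x$ is a global minimiser of $f$ then $f_x\geq f(x)$, so $|\widetilde\nabla f|(x)=0$, and if moreover $x$ is not the \emph{only} minimiser, a second minimiser at distance $w'>0$ forces $f_x(w')=f(x)$, hence $A=0$, so \eqref{fxu2pro} holds; if $x$ is not a global minimiser (in particular any $x\neq x_o$ when the minimiser is unique, and any $x$ when there is none) then $f_x(w)<f(x)$ for some $w$, so $A<0$, the terms with $f_x(w)\geq f(x)$ are $\geq0>A$ and can be discarded, and one is left with $A=-\sup_{w:\,f_x(w)<f(x)}\frac{f(x)-f_x(w)}{w}=-|\widetilde\nabla f|(x)$, i.e.\ $|\widetilde\nabla f|(x)=|A|=|\widetilde f_x'(0)|$. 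This exhausts the statement; I would add that \eqref{fxu2pro} can genuinely fail at a unique minimiser $x_o$ (there the left side is $0$ while $\widetilde f_x'(0)$ may be a strictly positive slope, e.g.\ for $f=d(x_o,\cdot)$ near $x_o$), which is why $x_o$ is singled out.
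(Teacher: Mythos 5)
Your proof is correct, but for Item $(i)$ it takes a genuinely different route from the paper. The paper argues by a perturbation at the level of optimal measures: for $y$ with $\tQ_tf(y)<\tQ_tf(x)$ it picks $p_o\in m_f(t,x)$ and $p_1\in m_f(t,y)$ (existence via Lemma \ref{mfnonempty}), differentiates $\lambda\mapsto \int f\,dp_\lambda+t\alpha\bigl((\lambda u+(1-\lambda)u_o)/t\bigr)$ at $\lambda=0$, and concludes with the triangle inequality and the convexity of $\alpha$. You instead work only at the point $x$: the first-order condition satisfied by $u\in\widetilde m_f(t,x)$ produces a supporting line of $\widetilde f_x$ with slope $-\alpha'(u/t)$, hence a global affine-in-$d(x,\cdot)$ minorant of $f$, and Young's inequality together with $\alpha^*(\alpha'(s))=\beta(s)$ turns this into the one-sided Lipschitz estimate $\tQ_tf(y)\geq\tQ_tf(x)-\alpha'(u/t)\,d(x,y)$ valid for every $y\in X$ and every competitor $q$ at $y$. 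This buys you something: you never need minimizers at $y$ (Lemma \ref{mfnonempty} is not used), and the intermediate Lipschitz bound is slightly stronger than what is required. In exchange you should spell out two points you gloss over: the case where $u$ is the right endpoint of $\widetilde I_x$ (there minimality forces $\widetilde f_x{}'(u^-)\leq-\alpha'(u/t)$, which still yields the supporting-line bound, so no harm), and competitors $q$ with infinite first moment (harmless, since they give value $+\infty$ unless $\alpha\equiv0$, a trivial case). For Item $(ii)$ your argument is close in spirit to the paper's --- both reduce to the radial profile $f_x$ and its convex hull --- but where the paper evaluates $|\widetilde f_x{}'(0)|$ through the two-point representation of Lemma \ref{Lem:convhull} and a monotonicity argument for $\lambda\mapsto\frac{\lambda a+(1-\lambda)b}{\lambda c+(1-\lambda)d}$, you identify $\widetilde f_x{}'(0)$ directly as $\inf_{w>0}\bigl(f_x(w)-f(x)\bigr)/w$ by combining chords with the affine minorant $f(x)+Aw$; your case analysis (unique minimizer, several minimizers, no minimizer) correctly supplies the existence of $y\neq x$ with $f(y)\leq f(x)$ that the paper's computation also rests on, and recovers $|\widetilde\nabla f|(x_o)=0$ as well as the validity of \eqref{fxu2pro} in the remaining cases.
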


\begin{Rem}
Observe that, if $f$ reaches its minimum at a unique point $x_o$, then it could be that
$\widetilde{f}_{x_o}^{\,\,'}(0) \neq 0$. For example consider, on $X = \RR^+$, $f(x)=x$ that reaches its minimum at $x_o=0$.
Trivially $\widetilde{f}_{x_0}(x)=x$ for all $x \in X$ so that  $\widetilde{f}_{x_o}^{\,\,'}(0)=1$. Hence, there is no hope for \eqref{fxu2pro}
to be true at $x_o$ in general.
\end{Rem}

\begin{proof}
First let us prove item $(i)$.
Consider $y$ such that $\tQ_tf(y)< \tQ_tf(x)$ (if there is no such $y$, then $|\widetilde \nabla \tQ_tf|(x)=0$ and there is nothing to prove). By Lemma \ref{mfnonempty}, there exist $p_o\in m_f(t,x), p_1\in m_f(t,y)$ and according to Item $(ii)$ of Theorem \ref{the1}, $u_o = \int d(x,z)\,p_0(dz) \in \widetilde {m}_f(t,x)$ and $u_1=\int d(y,z)\,p_1(dz)\in \widetilde {m}_f(t,y)$ and it holds
\begin{equation} \label{ihp1}
\tQ_tf(x)=\int f\,dp_o+t\alpha(u_o/t) \qquad \mbox{and} \qquad
\tQ_tf(y)=\int f\,dp_1+t\alpha(u_1/t) .
\end{equation}
Now, set
$p_\lambda:=(1-\lambda)p_o+\lambda p_1$, $\lambda \in [0,1]$,
$u := \int d(x,z)\,p_1(dz)$ and observe that, by definition of $\tQ_t$,
$$
\tQ_tf(x)\leq
  \int f\,dp_\lambda+t\alpha\left(\frac{\int d(x,z)\,p_\lambda(dz)}{t} \right) =
\int fdp_\lambda+t\alpha \left(\frac{\lambda u+(1-\lambda)u_o}{t} \right).
$$
Since the latter holds for all $\lambda \in [0,1]$ the function
$$
g \colon \lambda \mapsto \int f \,dp_\lambda+t\alpha\left(\frac{\lambda u+(1-\lambda)u_o}{t}\right)-\tQ_tf(x)
$$
is always non-negative. Therefore, since $g(0)=0$, $g'(0) =(\int f\,dp_1-\int f\,dp_o)+(u-u_o)\alpha'(u_o/t) \geq 0$ which ensures that
\begin{equation}\label{eqg0}
\int f\,dp_o-\int f\,dp_1 \leq  (u-u_o)\alpha'(u_o/t) .
\end{equation}
On the other hand, since $d(x,z)\leq d(x,y)+d(y,z)$, it holds
$u = \int d(x,z)\,p_1(dz) \leq \int(d(x,y)+d(y,z))\,p_1(dz)\leq u_1+d(x,y)$. As a consequence,
it holds
\begin{equation} \label{ihp2}
u- u_1 \leq d(x,y) .
\end{equation}
Thanks to \eqref{ihp1}, \eqref{eqg0} and \eqref{ihp2} together with the fact that $\alpha' \geq 0$,
for all $y$ such that $\tQ_tf(x)>\tQ_tf(y)$, it holds
\begin{align*}
[\tQ_tf(y)-\tQ_tf(x)]_-
&=
\tQ_tf(x)-\tQ_tf(y)\\
& =
\int f\,dp_o-\int f\,dp_1 + t \left(\alpha\left(\frac{u_o}{t}\right)-\alpha\left(\frac{u_1}{t}\right)\right)\\
& \leq
(u-u_o)\alpha'(\frac{u_o}{t})+t\left(\alpha\left(\frac{u_o}{t}\right)-\alpha\left(\frac{u_1}{t}\right)\right)\\
& \leq d(x,y)\alpha' \left(\frac{u_o}{t} \right)
+(u_1-u_o)\alpha' \left(\frac{u_o}{t} \right)+t \left(\alpha \left(\frac{u_o}{t} \right)-\alpha\left(\frac{u_1}{t} \right) \right).
\end{align*}
Therefore, by convexity of $\alpha$, we conclude that
$(u_1-u_o)\alpha'(\frac{u_o}{t})+t(\alpha(\frac{u_o}{t})-\alpha(\frac{u_1}{t}))\geq 0$ and in turn that for all $x, y\in X$,
$[\tQ_tf(y)-\tQ_tf(x)]_- \leq d(x,y)\alpha'(\frac{u_o}{t})$
which leads to the expected result by taking the supremum over $y \neq x$.
\medskip

Now we turn to the proof of Item $(ii)$. Fix $x \in X$. The proof relies on the existence of a point $y \neq x$ such that $f(y) \leq f(x)$.
Such an existence is guaranteed for all $x \in X$ (resp. for all $x \in X \setminus \{x_o\}$) when $f$ does not reach its minimum or reaches its minimum in more than two points (resp. when $f$ reaches its minimum at a unique point $x_o$).
Given such a point $y$, by definition of $\widetilde{f}_x$, we have $\widetilde{f}_x(0)=f(x) \geq f(y)\geq \widetilde{f}_x\left(d(x,y)\right)$. Thanks to the convexity of $\widetilde{f}_x$, the slope function $u \mapsto \frac{\widetilde{f}_x(u)-\widetilde{f}_x(0)}{u}$ is non-decreasing. It follows that
$$
\widetilde{f}_x^{\,\,'}(0)=\lim_{u\rightarrow 0^+}\frac{\widetilde{f}_x(u)-\widetilde{f}_x(0)}{u}=\inf_{u>0}\frac{\widetilde{f}_x(u)-\widetilde{f}_x(0)}{u}\leq \frac{\widetilde{f}_x\left(d(x,y)\right)-\widetilde{f}_x(0)}{d(x,y)} \leq 0 .
$$
Taking the absolute value, we get
$$
|\widetilde{f}_x^{\,\,'}(0)|=\sup_{u>0}\frac{\widetilde{f}_x(0)-\widetilde{f}_x(u)}{u}.
$$
Observe that, according to Lemma \ref{Lem:convhull}, for all $u>0$, $\widetilde{f}_x(u)=\inf \int  f\,dp$ where the infimum is running over all $p\in \mathcal{P}_2(X)$ such that $\int d(x,\,\cdot\,)\,dp=u$.
Hence, setting $p=\lambda \delta_{y_1} + (1-\lambda) \delta_{y_2}$, $y_1,y_2 \in X$, $\lambda \in [0,1]$, we have (recall that $\widetilde{f}_x(0)=f(x)$)
\begin{align*}
|\widetilde{f}_x^{\,\,'}(0)|
&=
\sup_{u>0}\frac{\widetilde{f}_x(0)-\widetilde{f}_x(u)}{u}.\\
&=
\sup_{u>0}\sup_{\genfrac{}{}{0pt}{}{y_1,y_2 \in X, \lambda\in[0,1]s.t}{\ \lambda d(x,y_1)+(1-\lambda)d(x,y_2)=u}}\frac{f(x)-\left(\lambda f(y_1)+(1-\lambda)f(y_2)\right)}{u}\\
&=
\sup_{y_1, y_2\in X,\lambda\in[0,1]}\frac{\lambda (f(x)- f(y_1))+(1-\lambda)(f(x)-f(y_2))}{\lambda d(x,y_1)+(1-\lambda)d(x,y_2)}\\
&=
\sup_{y\neq x}\frac{f(x)-f(y)}{d(x,y)}=|\widetilde{\nabla}f|(x),
\end{align*}
where the last equality comes from the fact that the function $\lambda \mapsto \frac{\lambda a + (1-\lambda) b}{\lambda c +(1-\lambda )d}$ (with $c,d>0$ and $a,b\in \RR$) is monotone on $[0,1].$ This proves \eqref{fxu2pro}. That $|\widetilde \nabla f|(x_o)=0$ is a direct consequence of the definition of the gradient.
\end{proof}
\subsection{Obstruction to the semi-group property of the usual inf-convolution operator $Q_t$, on graphs}

In this section we prove that, on a graph and under very mild assumptions,
there is no hope of finding a family of mappings $(D_t)_{t > 0}$ such that  $Q_tf(x):=\inf_{y\in V}\{f(y)+D_t(y,x)\}$ satisfies the usual semi-group property $Q_{t+s}=Q_t( Q_s)$.

More precisely, we have the following result.

\begin{Pro}
Let $G=(V,E)$ be a finite graph.
Assume we are given a family of mappings $D_t \colon V \times V \to \mathbb{R}^+$, $t>0$ that satisfies
$D_t(x,x)=0$ for all $x \in V$ and all $t>0$. Assume furthermore that for any $f \colon V \to \RR$ and any $x \in V$,
$Q_tf(x):=\inf_{y\in V}\{f(y)+D_t(y,x)\} \rightarrow f(x)$ when $t\rightarrow 0$.
Then, there exists $f$, $x \in V$ and $t,s >0$ such that $Q_{t+s}f(x) \neq Q_t( Q_s f)(x)$.
\end{Pro}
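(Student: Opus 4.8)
The plan is to argue by contradiction: assume the semigroup property holds in full, i.e.\ $Q_{t+s}f(x)=Q_t(Q_sf)(x)$ for every $f\colon V\to\RR$, every $x\in V$ and all $t,s>0$, and show this is incompatible with the normalization $Q_tf(x)\to f(x)$ as $t\to0$. The strategy rests on two ingredients. First, from the normalization alone I would extract that $D_s(p,z)\to+\infty$ as $s\to0^+$ for every pair of distinct vertices $p\neq z$: applying $Q_sf(z)\to f(z)$ to the function $f_M$ defined by $f_M(z)=M$ and $f_M\equiv0$ off $z$ gives $D_s(p,z)=f_M(p)+D_s(p,z)\geq Q_sf_M(z)\to M$ as $s\to0$, and $M>0$ is arbitrary. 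Second, from the assumed semigroup property I would derive a ``cocycle'' identity for the kernels $D_t$.

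To get the cocycle identity, plug into $Q_{t+s}f=Q_t(Q_sf)$ the test functions $f_R$ with $f_R(p)=0$ and $f_R\equiv R$ off $p$. Since $V$ is finite, every infimum defining $Q_t$ is attained and all values are real; computing $Q_{t+s}f_R$ and $Q_t(Q_sf_R)$ at an arbitrary vertex $q$ and letting $R\to+\infty$ (only finitely many thresholds in $R$, one per vertex, must be cleared at once, which is where finiteness of $V$ matters) yields
\[
D_{t+s}(p,q)=\min_{z\in V}\bigl(D_s(p,z)+D_t(z,q)\bigr)\qquad\text{for all }p,q\in V,\ t,s>0,
\]
an identity I shall refer to as $(\star)$. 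Taking $z=p$ in $(\star)$ immediately gives $D_{t+s}(p,q)\leq D_t(p,q)$, so $t\mapsto D_t(p,q)$ is non-increasing.

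Next I would use $(\star)$ to show that $t\mapsto D_t(p,q)$ is in fact \emph{constant} on $(0,\infty)$, which produces the contradiction. Fix $p\neq q$ and $t>0$. In $(\star)$ the term indexed by any $z\neq p$ is at least $D_s(p,z)$, which tends to $+\infty$ as $s\to0^+$ by the first ingredient, whereas the term $z=p$ equals the fixed finite number $D_t(p,q)$; hence for $s$ small the minimum is attained at $z=p$, so $D_{t+s}(p,q)=D_t(p,q)$ for all small $s>0$. Applying $(\star)$ with the splitting $(t-s)+s$ in place of $t+s$, and using the monotonicity just proved to keep $D_{t-s}(p,q)\leq D_{t/2}(p,q)$ bounded, the same comparison forces $D_{t-s}(p,q)=D_t(p,q)$ for all small $s>0$. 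Thus $t\mapsto D_t(p,q)$ is locally constant on the connected interval $(0,\infty)$, hence globally constant; but then $\lim_{t\to0^+}D_t(p,q)$ is finite, contradicting the first ingredient as soon as a pair $p\neq q$ exists, that is, as soon as $|V|\geq2$ (the statement being of interest only in that case). This closes the argument.

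I expect the only delicate point to be the passage to $(\star)$: one must verify that the chosen test functions $f_R$ make both sides of the semigroup identity collapse to the kernels themselves, and here the finiteness of $V$ is genuinely used — to attain the infima, to keep all quantities real-valued, and to pick a single large $R$ that works for all vertices simultaneously. Everything after $(\star)$ is soft: the combination of a cocycle relation with off-diagonal blow-up of $D_s$ rigidly forces $D_t(p,q)$ to be independent of $t$ — precisely the behaviour that does \emph{not} occur in the continuous Hopf--Lax setting, where $D_t(x,y)=t\alpha(d(x,y)/t)$ depends genuinely on $t$.
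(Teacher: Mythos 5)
Your proof is correct and takes essentially the same route as the paper: your identity $(\star)$ is exactly the paper's key lemma (obtained there with the same kind of test function), your off-diagonal blow-up $D_s(p,z)\to\infty$ as $s\to0^+$ is the paper's other claim, and both arguments conclude by forcing $t\mapsto D_t(p,q)$ to be constant in $t$, contradicting the blow-up. The only (minor) differences are your more direct test function for the blow-up and your endgame — two-sided local constancy plus connectedness of $(0,\infty)$ — which is if anything cleaner than the paper's sup-based contradiction.
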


\begin{proof}
By contradiction assume that for all $f$ bounded on $V$, all $x \in X$ and $s,t>0$, it holds
$Q_tQ_sf=Q_{t+s}f$. The proof is based on the following claims.

\begin{Claim} \label{claim2}
For all $x,z \in V$, all $s<r \in(0,\infty)$, it holds
$D_{r}(z,x)=\min_{y \in V} \left\{ D_s(z,y)+D_{r-s}(y,x) \right\}$.
\end{Claim}

\begin{Claim} \label{claim1}
For all $x,z \in V$, the map $(0,\infty) \ni t \mapsto D_t(z,x)$ is non-increasing and, if $x \neq z$,
$D_t(z,x) \to \infty$ as $t$ goes to $0$.
\end{Claim}

We postpone the proof of the above claims to end the prove of the proposition.

Fix $x,z \in V$, $x \neq z$. Then,  by Claim \ref{claim2}, for all $s \in (0,1)$, it holds
\begin{align*}
D_1(z,x)
& =
\min_{y \in V} \{ D_s(z,y) + D_{1-s}(y,x) \} \\
& =
\min\left( D_{1-s}(z,x) ; \min_{y \neq z} \{D_s(z,y) + D_{1-s}(y,x)\} \right) .
\end{align*}
By Claim \ref{claim1} and since the graph is finite, $\lim_{s \to 0} \min_{y \neq z}\{ D_s(z,y) + D_{1-s}(y,x)\} = \infty$. Hence, there exists $s_o \in (0,1)$ such that, for $s < s_o$, $D_1(z,x)=D_{1-s}(z,x)$ so that $u_o:= \sup\{ u \in (0,1) : D_{1-u}(z,x)=D_1(z,x)\}$ is well-defined thanks to Claim \ref{claim1}. By a similar argument, there exists $s_1 \in (0,1-u_o)$ such that $D_{1-u_o-s}(z,x)=D_{1-u_o}(z,x)$ for all $s < s_1$. This contradicts the definition of $u_o$ and ends the proof of the proposition provided that we prove Claim \ref{claim1} and Claim \ref{claim2}.

\begin{proof}[Proof of Claim \ref{claim2}]
Since $D_t(x,z)$ is non-negative and $D_t(x,x)=0$, the claim is trivial if $x=z$. Assume that $x\neq z$. Let $s < r$ and consider $f \colon V \to \RR$ defined by
$f(z)=0$ and $f(y) = D_r(z,x) + 1$ for all $y \neq z$. Then
\begin{align*}
Q_r f(x)
& =
\min_{y \in V} \{ f(y) + D_r(y,x) \} = \min\left(D_r(z,x) ; \min_{y \neq z} \{ f(y) + D_r(y,x) \} \right) \\
& =  D_r(z,x) .
\end{align*}
On the other hand, by the semi-group property, similarly (necessarily $u=z$) it holds
\begin{align*}
Q_r f(x)
& =
Q_{r-s}(Q_s f)(x) = \min_{u,y \in V} \{ f(u) + D_s(u,y) + D_{r-s}(y,x) \} \\
& =
\min_{y \in V} \{ D_s(z,y) + D_{r-s}(y,x) \}
\end{align*}
which leads to the thesis.
\end{proof}

\begin{proof}[Proof of Claim \ref{claim1}]
If $x=z$, the map $t \mapsto D_t(z,x)$ is constant and so there is nothing to prove. Assume that $x \neq z$.
By Claim \ref{claim2} we have for $s<r$ (take $y=x$),
$D_r(z,x) = \inf_{y \in V} \{ D_s(z,y) + D_{r-s}(y,x) \}\leq D_s(z,x)$
which proves that $t \mapsto D_t(z,x)$ is non-increasing and that the limit $\lim_{r \to 0} D_r(z,x)$ exists in $[0,\infty]$.
For $M>0$, let $f \colon V \to \RR$ be defined by $f(z)=0$, $f(x)=M$ and $f(y)=M+1$ for all $y\neq z,x$.
Then
\begin{align*}
Q_r f(x)
& =
\min_{y \in V} \{ f(y) + D_r(y,x) \} = \min\left(D_r(z,x) ; f(x) ; \min_{y \neq z,x} \{ f(y) + D_r(y,x) \} \right) \\
& =  \min\left(D_r(z,x) ; M \right) \leq \frac{1}{2} \left( D_r(z,x) +  M\right) .
\end{align*}
Now, by assumption $Q_r f(x) \to f(x)=M$ as $r$ goes to 0 so that, taking the limit in the latter guarantees that
$\lim_{r \to 0} D_r(z,x) \geq M$ which ends the proof of Claim \ref{claim1} since $M$ is arbitrarily large.
\end{proof}
The proof of the proposition is complete.
\end{proof}

\section{Hamilton-Jacobi equation: Proof of Theorem \ref{thm:main}} \label{sec:proof}

This section is dedicated to the proof of Theorem \ref{thm:main}. Actually we shall prove a more general result involving a general choice of the function $\alpha$, not only $\alpha(x)=\frac{1}{2} x^2$ as stated in Theorem \ref{thm:main}. More precisely, we shall prove the following (recall that
$\alpha^*$ is the Fenchel-Legendre transform of $\alpha$ defined in Section \ref{preliminaries}).

\begin{The}\label{thm:main2}
Let $f \colon X \rightarrow \RR$ be a lower semi-continuous function bounded from below.
Then, for all $x \in X$, it holds
\begin{description}
\item $(i)$ For all $t>0$, $\frac{\partial}{\partial t}\tQ_tf(x)+ \alpha^*\left( |\widetilde{\nabla}\tQ_tf|(x) \right) \leq 0$.
\item $(ii)$ Assume that $\alpha^*$ is well define on $[0,l)$, (i.e $\forall x\in [0,l)$, $\alpha^*(l)<\infty$.) Then for all $x$ such that $|\widetilde{\nabla} f|(x)\in [0,l)$, $\lim_{t \to 0} \tQ_t f = f$ and
it holds
$$
\frac{\partial}{\partial t}\tQ_tf(x)|_{t=0}+ \alpha^* \left( |\widetilde{\nabla}f|(x) \right) = 0.
$$
\end{description}
\end{The}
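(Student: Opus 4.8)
The plan is to reduce everything to the one‑dimensional statements already available through Theorem \ref{the1} and the quoted result \cite[Theorem 1.10]{GRS14} on the classical inf‑convolution $Q_t$ of a convex function on $\RR$, and then to connect the derivative $\partial_t\tQ_t f(x)$ (computed via $\beta$) with the gradient $|\widetilde\nabla\tQ_t f|(x)$ (controlled via $\alpha'$) by the Young/Fenchel identity $\alpha^*(\alpha'(s)) = s\alpha'(s)-\alpha(s) = \beta(s)$. Concretely, fix $x\in X$ and $t>0$, and pick $p\in m_f(t,x)$ (nonempty by Lemma \ref{mfnonempty}); set $s:=\int d(x,y)\,p(dy)/t$. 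Item $(iv)$ of Theorem \ref{the1} gives $\partial_t\tQ_t f(x) = -\beta(s)$, while Item $(i)$ of Proposition \ref{eq6} gives $|\widetilde\nabla\tQ_t f|(x)\le \alpha'(s)$. Since $\alpha^*$ is nondecreasing on $\RR^+$ and $\alpha^*(\alpha'(s)) = \beta(s)$, monotonicity yields $\alpha^*\bigl(|\widetilde\nabla\tQ_t f|(x)\bigr)\le \alpha^*(\alpha'(s)) = \beta(s) = -\partial_t\tQ_t f(x)$, which is exactly Item $(i)$. I should be slightly careful: this uses $\alpha^*\le\beta\circ\alpha'$ pointwise, which is immediate from the definition $\alpha^*(x)=\sup_y\{xy-\alpha(y)\}$ applied at $y=s$ when $x=\alpha'(s)$ together with the fact that the supremum is attained there by convexity — a routine remark to insert.

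For Item $(ii)$, the first task is the initial condition $\lim_{t\to0}\tQ_t f(x) = f(x)$. By Item $(i)$ of Theorem \ref{the1}, $\tQ_t f(x) = Q_t\widetilde f_x(0)$, and $\widetilde f_x$ is a convex, lower semicontinuous, bounded‑below function on $\RR^+$ with $\widetilde f_x(0) = f_x(0) = f(x)$ (this last equality is precisely the continuity at $0$ proved in Lemma \ref{Lem:convhull}). Standard one‑dimensional Hopf–Lax facts — or again \cite[Theorem 1.10]{GRS14} — give $Q_t\widetilde f_x(0)\uparrow\widetilde f_x(0)$ as $t\downarrow0$ whenever $\widetilde f_x$ is lsc at $0$; this is where the hypothesis $|\widetilde\nabla f|(x)=|\widetilde f_x{}'(0)|<l$ (via Proposition \ref{eq6}$(ii)$, modulo the exceptional unique‑minimizer case, where the statement is trivial since then $|\widetilde\nabla f|(x_o)=0$ and $\tQ_t f(x_o)\to f(x_o)$ directly) enters: finiteness of $\alpha^*$ near $\alpha'(0^+)$‑type slopes guarantees the relevant minimizing radii $\widetilde m_f(t,x)$ shrink to $0$ as $t\to0$, so that $\tQ_t f(x)\to f(x)$.

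It then remains to compute the right derivative at $t=0$ and show it equals $-\alpha^*(|\widetilde\nabla f|(x))$. Using $\partial_t\tQ_t f(x) = -\beta(u(t)/t)$ with $u(t)\in\widetilde m_f(t,x)$ (Item $(iv)$ of Theorem \ref{the1}), and the one‑dimensional formula $\frac{d}{dt_+}Q_t\widetilde f_x(0)|_{t=0} = -\alpha^*\bigl(|\widetilde f_x{}'(0)|\bigr)$ from \cite[Theorem 1.10]{GRS14} (valid precisely because the slope lies in the domain $[0,l)$ of $\alpha^*$), together with $|\widetilde f_x{}'(0)| = |\widetilde\nabla f|(x)$ from Proposition \ref{eq6}$(ii)$, gives the claim. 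The main obstacle is the careful bookkeeping at $t=0$: matching the left/right derivative conventions, checking that the minimizing radius $u(t)/t\to |\widetilde f_x{}'(0)|$ along the relevant sequence (so that $\beta(u(t)/t)\to\beta(|\widetilde f_x{}'(0)|) = \alpha^*(|\widetilde f_x{}'(0)|)$ by continuity of $\beta$ and the Fenchel identity), and handling the boundary/unbounded‑$I_x$ subtleties exactly as in the proof of Theorem \ref{the1}$(ii)$. Everything else is a direct transcription of the one‑dimensional theory through Item $(i)$ of Theorem \ref{the1}.
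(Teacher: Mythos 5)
Your treatment of Item $(i)$ is exactly the paper's argument: take $p\in m_f(t,x)$, combine Theorem \ref{the1}$(iv)$ with Proposition \ref{eq6}$(i)$, and use the identity $\alpha^*(\alpha'(s))=\beta(s)$ (attainment of the sup at $y=s$) together with the monotonicity of $\alpha^*$; nothing to add there.

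For Item $(ii)$, however, there is a genuine gap. The $t=0$ derivative is precisely the part that cannot be outsourced to \cite[Theorem 1.10]{GRS14}: as that result is used in the proof of Theorem \ref{the1}$(iv)$, it only expresses $\frac{d}{dt_\pm}Q_t\widetilde f_x(0)$ for $t>0$ in terms of the minimizing set $\widetilde m_f(t,x)$, and at $t=0$ there is no such set; the limit of the difference quotient $\bigl(\tQ_tf(x)-f(x)\bigr)/t$ has to be computed by hand, and this computation is the entire content of the paper's proof of Item $(ii)$. Moreover, the ``bookkeeping'' you sketch as a substitute is incorrect for general $\alpha$: what must be shown is $\alpha'\bigl(u(t)/t\bigr)\to|\widetilde f_x^{\,\,\prime}(0)|$, not $u(t)/t\to|\widetilde f_x^{\,\,\prime}(0)|$, and the identity you invoke, $\beta\bigl(|\widetilde f_x^{\,\,\prime}(0)|\bigr)=\alpha^*\bigl(|\widetilde f_x^{\,\,\prime}(0)|\bigr)$, is false unless $\alpha(x)=x^2/2$ (the valid Fenchel identity is $\alpha^*\circ\alpha'=\beta$). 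So even your fallback argument would only prove the quadratic case of the theorem.

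Concretely, the missing substance is twofold. First, a lower bound: writing the quotient through $\widetilde f_x$ and a minimizer $u>0$ (and checking $u>0$ when $x$ is not a minimizer, which the paper does by a two-point perturbation), Young's inequality and slope monotonicity give $\frac{f(x)-\tQ_tf(x)}{t}\leq \alpha^*\bigl(\frac{\widetilde f_x(0)-\widetilde f_x(u)}{u}\bigr)\leq \alpha^*\bigl(|\widetilde f_x^{\,\,\prime}(0)|\bigr)$; since the right-hand side is finite precisely because $|\widetilde\nabla f|(x)\in[0,l)$, this simultaneously yields $\liminf_{t\to0}\frac{\tQ_tf(x)-f(x)}{t}\geq-\alpha^*\bigl(|\widetilde f_x^{\,\,\prime}(0)|\bigr)$ and the convergence $\tQ_tf(x)\to f(x)$ (your monotone-convergence argument for the initial condition is plausible but also not where the hypothesis is cashed in in a verifiable way). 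Second, an upper bound: one shows $\frac{\tQ_tf(x)-f(x)}{t}\leq-\beta(u/t)=-\alpha^*\bigl(\alpha'(u/t)\bigr)$, then sandwiches $-\frac{d}{du_+}\widetilde f_x(u)\leq\alpha'(u/t)\leq-\frac{d}{du_-}\widetilde f_x(u)$ via the first-order conditions at the minimizer, uses $|\widetilde\nabla f|(x)\leq\alpha'(h_1)$ for some finite $h_1$ (this is exactly where the hypothesis $|\widetilde\nabla f|(x)<l$ enters) to get $u/t\leq h_1$, hence $u\to0$, and concludes with the right-continuity of $\frac{d}{du_+}\widetilde f_x$ at $0$ and the monotonicity and continuity of $\alpha^*$, plus Proposition \ref{eq6}$(ii)$ to identify $|\widetilde f_x^{\,\,\prime}(0)|=|\widetilde\nabla f|(x)$. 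Without these two steps your Item $(ii)$ rests on a citation that does not cover $t=0$ and on a sketch that fails for non-quadratic costs.
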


\begin{Rem}
\begin{description}
\item In Item $(ii)$, if $\lim_{x\rightarrow \infty}\alpha(x)/x=\infty$, we can take $l=\infty$, then the latter equation holds for almost every $x\in X$.
\item If $f$ is $l-\epsilon$-lipschiz then $|\widetilde{\nabla}f|(x)<l$ and the latter equality holds. Moreover, if there exists $h$ such that $\alpha'(h)=l$, then the latter holds for all $x$ such that $|\widetilde{\nabla} f|(x)\in [0,l]$.
\end{description}
\end{Rem}

\begin{proof}
We will first prove Item $(i)$.
On the one hand, by Theorem \ref{the1}, for all $t>0$, it holds
$$
\frac{\partial}{\partial t} \tQ_tf(x)= - \beta\left(\frac{u_o}{t}\right), \qquad x \in X
$$
where $u_o \in \widetilde{m}_f(t,x)$. On the other hand,  since $\alpha^*$ is non-decreasing, Proposition \ref{eq6} ensures that
$$
\alpha^* \left(|\widetilde{\nabla}\tQ_tf|(x) \right)
\leq
\alpha^*\left(\alpha'\left(\frac{u_o}{t}\right)\right) .
$$
In order to conclude, it is enough to observe that, the function $G:=y\mapsto y\alpha'\left(\frac{u_o}{t}\right)-\alpha(y)$ is a concave function and $G'\left(\frac{u_o}{t}\right)=0$. Hence,
$$
\alpha^*\left(\alpha'\left(\frac{u_o}{t}\right) \right)= \sup_{y\in \RR}\left\{y\alpha'\left(\frac{u_o}{t}\right)-\alpha(y)\right\}=\frac{u_o}{t}\alpha'\left(\frac{u_o}{t}\right)-\alpha\left(\frac{u_o}{t}\right)=\beta\left(\frac{u_o}{t}\right).
$$

Now we turn to the proof of Item $(ii)$.
If $x=x_o$ is a minimum of $f$ (if any), then (observe that $\tQ_t f(x_o) = f(x_o)$ for all $t >0$) it is easy to see that $\frac{\partial}{\partial t}\tQ_tf(x)|_{t=0}:= \lim_{t \to 0}\frac{\tQ_t f(x)-f(x)}{t}= \alpha^* \left( |\widetilde{\nabla}f|(x) \right)=0$ and the claim follows. For the remaining of the proof we assume that $x\in X$ is not a minimum of $f$.
Thanks to Theorem~\ref{the1}, for all $t>0$, it holds
$$
\frac{\tQ_tf(x)-f(x)}{t}=\frac{Q_t\widetilde{f}_x(0)-\widetilde{f}_x(0)}{t}=\frac{\widetilde{f}_x(u)-\widetilde{f}_x(0)}{t}+\alpha\left(\frac{u}{t}\right),
$$
where $u\in \widetilde{m}_f(t,x)$.

Let us prove that $u>0$. Since $x$ is not a minimum of $f$,  there exists $y\in X$ such that $f(y)<f(x)$.
Fix $t>0$, by the very definition of $\widetilde {Q}_t$, for all $\lambda\in [0,1]$, it holds that $\tQ_tf(x)\leq (1-\lambda) f(x)+\lambda f(y)+t\alpha\left(\frac{\lambda d(x,y)}{t}\right)$ (choose $p=(1-\lambda) \delta_x + \lambda \delta_y$).
Define $G: [0,1] \ni \lambda \mapsto (1-\lambda) f(x)+\lambda f(y)+t\alpha\left(\frac{\lambda d(x,y)}{t}\right)$. Then
$G'(0)=\alpha'(0)d(x,y)+f(y)-f(x)=f(y)-f(x)<0$. Thus, there exist $\lambda\in (0,1)$ such that $\tQ_tf(x)\leq G(\lambda)<G(0)=f(x)$. Hence  $\widetilde{f}_x(u) \leq \tQ_tf(x)<f(x)=\widetilde{f}_x(0)$ and therefore $u>0$.

According to Lemma \ref{Lem:convhull}, for all $x\in X$, $\widetilde{f}_x$ is convex and continuous on $\widetilde{I}_x$. It follows that
$\frac{\widetilde{f}_x(u)-\widetilde{f}_x(0)}{u}\geq \widetilde{f}_x^{\,\,'}(0)$. Since $\widetilde{f}_x(u)\leq Q_t\widetilde{f}_x(0)\leq \widetilde{f}_x(0)$, we have that
$\frac{\widetilde{f}_x(u)-\widetilde{f}_x(0)}{u}$ is non-positive and $\frac{\widetilde{f}_x(0)-\widetilde{f}_x(u)}{u}\leq |\widetilde{f}_x^{\,\, '}(0)|$.
Hence,
\begin{align*}
\frac{f(x)-\tQ_tf(x)}{t}
&=
\frac{\widetilde{f}_x(0)-\widetilde{f}_x(u)}{t}-\alpha\left(\frac{u}{t}\right)=\frac{\widetilde{f}_x(0)-\widetilde{f}_x(u)}{u}\frac{u}{t}-\alpha\left(\frac{u}{t}\right)\\
&\leq \alpha^*\left(\frac{\widetilde{f}_x(0)-\widetilde{f}_x(u)}{u}\right)\leq \alpha^*\left(|\widetilde{f}_x^{\,\,'}(0)|\right)
\end{align*}
where the last inequality comes from the fact that $\alpha^*$ is non-decreasing.
This leads to
\begin{equation}\label{eq1022}
\liminf_{t\rightarrow 0}\frac{\tQ_tf(x)-f(x)}{t}\geq -\alpha^*\left(|\widetilde{f}_x^{\,\,'}(0)|\right),
\end{equation}
by passing to the limit.

Next, we prove that $\limsup_{t\rightarrow 0}\frac{\tQ_tf(x)-f(x)}{t}\leq-\alpha^* \left( |\widetilde{f}_x^{\,\,'}(0)| \right)$.
 By convexity of $\widetilde{f}_x$, for all $h\in(0,u)$, it holds
\begin{equation}\label{eqconvex}
\frac{\widetilde{f}_x(u)-\widetilde{f}_x(0)}{u}\leq \frac{\widetilde{f}_x(u)-\widetilde{f}_x(u-h)}{h}.
\end{equation}
On the other hand, since (by definition of $u$) $\widetilde{f}_x(u)+t\alpha\left(\frac{u}{t}\right)\leq \widetilde{f}_x(u-h)+t\alpha\left(\frac{u-h}{t}\right)$, we have
\begin{equation}\label{eqconvex2}
 \frac{\widetilde{f}_x(u)-\widetilde{f}_x(u-h)}{h}\leq \frac{t\left(\alpha\left(\frac{u-h}{t}\right)-\alpha\left(\frac{u}{t}\right)\right)}{h}.
\end{equation}

According to \eqref{eqconvex} and \eqref{eqconvex2}, for all $h\in(0,u)$, it holds:
\begin{align*}
\frac{\tQ_tf(x)-f(x)}{t}
 =
\frac{\widetilde{f}_x(u)-\widetilde{f}_x(0)}{t}+\alpha\left(\frac{u}{t}\right)
\leq
\frac{u}{t}\frac{\alpha\left(\frac{u-h}{t}\right)-\alpha\left(\frac{u}{t}\right)}{h/t}+\alpha\left(\frac{u}{t}\right).
\end{align*}
Let $h$ goes to 0, we get that
\begin{equation} \label{faim}
\frac{\tQ_tf(x)-f(x)}{t}\leq -\frac{u}{t}\alpha'\left(\frac{u}{t}\right)+\alpha\left(\frac{u}{t}\right)=-\beta\left(\frac{u}{t}\right)=-\alpha^*\left(\alpha'\left(\frac{u}{t}\right)\right)
\end{equation}
where we recall that $\beta$ is defined in Section \ref{sec:qt}.
Hence, it is enough to prove that $\lim_{t\rightarrow 0}\alpha'\left(\frac{u}{t}\right)=|\widetilde{f}_x^{\,\,'}(0)|$.
Since $\widetilde{f}_x$ is convex, it is right and left differentiable at every point. Hence taking the left derivative of
$v \mapsto \widetilde{f}_x(v)+t\alpha\left(\frac{v}{t}\right)$,
for all $t \in\RR^+$ and all $u \in \widetilde{m}_f(t,x)$, we have
$$
\alpha'\left(\frac{u}{t}\right) \leq -\frac{d}{du_-}\widetilde{f}_x(u).
$$

Let $l:=\lim_{x\rightarrow \infty}\alpha'(x)$, it is easy to see that $\alpha^*(x)<\infty$ when $x\leq l$ and $=\infty$ when $x>l$.
By  Item $(ii)$ of Proposition \ref{eq6} and convexity  of $\widetilde{f}_x$ and Equation \eqref{fxu2pro}, there exists $h_1<l$ such that the following holds:
  $$
  \alpha'\left(\frac{u}{t}\right)
  \leq
  - \frac{d}{du_-}\widetilde{f}_x^{\,\,'}(u)
  \leq -\widetilde{f}_x^{\,\,'}(0)=|\widetilde{\nabla}f|(x) \leq \alpha'(h_1) .
  $$
By convexity of $\alpha$, the latter inequality leads to $\frac{u}{t}\leq h_1$ for all $t>0$.
We conclude from the above argument that $u \in \widetilde{m}(t,x)$ goes to $0$ as $t$ goes to $0$.

Now, taking the right  derivative of
$v \mapsto \widetilde{f}_x(v)+t\alpha\left(\frac{v}{t}\right)$,
for all $t \in\RR^+$ and all $u \in \widetilde{m}_f(t,x)$, we have
$$
\alpha'\left(\frac{u}{t}\right) \geq -\frac{d}{du_+}\widetilde{f}_x(u).
$$
Since $\lim_{u\rightarrow 0} \frac{d}{du_+}\widetilde{f}'_x(u) = \widetilde{f}_x^{\,\,'}(0)$ and using the monotonicity and the (right) continuity of $\alpha^*$
when $t$ goes to $0$, we have thanks to \ref{faim}
\begin{equation}\label{eq10221}
\limsup_{t\rightarrow 0} \frac{\tQ_tf(x)-f(x)}{t} \leq -\alpha^*\left(|\widetilde{f}_x^{\,\,'}(0)|\right)
\end{equation}
This combined with \ref{eq1022} and Proposition\ref{eq6} leads to the desired result.







\end{proof}

\section{Functional inequalities}\label{functional inequality}

In this section we shall first introduce different functional inequalities (of Poincar\'e and log-Sobolev type related to the gradient $\widetilde \nabla$) and two transport-entropy inequalities. Then, following \cite{BGL} on the one hand, and \cite{BL} on the other hand, by means of our main result on the Hamilton-Jacobi equation (Theorem \ref{thm:main2}) we shall prove some relations between such inequalities. For simplicity and to avoid unnecessary technical assumptions and proofs, we shall mainly deal with the quadratic or quadratic-linear costs.
However, most of the results below can be extended to more general situations.
\medskip

We start with some definitions. One says that $\mu \in \mathcal{P}(X)$ satisfies the Poincar\'e inequality, respectively the modified log-Sobolev inequality\footnote{We observe that the terminology here is not optimal since there already exist, in the literature, many different inequalities called modified log-Sobolev inequality that have \textit{a priori} no relation between them, and no relation with our definition.} of type I and type II, respectively the weak transport-entropy inequality of type I and type II, if there exists a constant $C \in (0, \infty)$ such that for all $f \colon X \to \RR$ bounded it holds
\begin{equation} \label{poincare}
\var_\mu(f) \leq C \int  |\widetilde \nabla f|^2  \,d\mu \qquad (\mbox{Poincar\'e Inequality}),
\end{equation}
respectively
\begin{equation} \label{logsob}
\ent_\mu(e^f) \leq C \int  \alpha^*\left(|\widetilde \nabla f|\right) e^f \,d\mu
\quad (\mbox{Modified log-Sob Ineq. of type I}),
\end{equation}
\begin{equation} \label{logsob2}
\ent_\mu(e^f) \leq C \int  \alpha^*\left(|\widetilde \nabla (-f)|\right) e^f\, d\mu
\quad (\mbox{Modified log-Sob Ineq. of type II}),
\end{equation}
respectively for all $\nu \in \mathcal{P}(X)$ it holds
\begin{equation} \label{t2}
\widetilde{T}_\alpha(\mu|\nu) \leq C H(\nu | \mu)
\qquad (\mbox{Weak transport-entropy Inequality of type I}),
\end{equation}
\begin{equation} \label{t22}
\widetilde{T}_\alpha(\nu|\mu) \leq C H(\nu | \mu)
\qquad (\mbox{Weak transport-entropy Inequality of type II}),
\end{equation}
where we recall that $\var_\mu(f):=\int  f^2 \,d\mu - \left( \int  f \,d\mu\right)^2$ is the variance of $f$ with respect to $\mu$, $\ent_\mu(e^f):=\int  f e^f \,d\mu - \int  e^f \,d\mu \log \int  e^f \,d\mu$ is the entropy of $e^f$ with respect to $\mu$, $H(\nu | \mu)= \ent_\mu(e^f)$ if $\nu \ll \mu$ and $e^f=d\nu/d\mu$, and $H(\nu|\mu)=\infty$ otherwise, while $\widetilde{T}_2(\mu|\nu)$ is defined in \eqref{weakt}. For general $\alpha$, we have
\begin{equation} \label{weaktbis}
\widetilde T_\alpha(\nu|\mu) := \inf \left\{ \int  \alpha \left( \int  d(x,y)\,p_x(dy)\right) \mu(dx) \right\},
\qquad \mu,\nu \in \mathcal{P}(X)
\end{equation}
 where the infimum is running over all couplings $\pi(dx,dy)=p_x(dy)\mu(dx)$ of $\mu,\nu$ (\textit{i.e.}\ $\pi$ is a probability measure on $X \times X$ with first marginal $\mu$ and second marginal $\nu$).
We stress that
$\widetilde{T}_2(\,\cdot\,|\,\cdot\,)$ is not symmetric so that \eqref{t2} is in general different from \eqref{t22}.
For further developments on transport-entropy inequalities involving
$\widetilde{T}_2$, we refer to \cite{Dualdiscrete}.

\subsection{Modified log-Sobolev inequality}\label{subsectionMLI}

In this section, we focus on the modified log-Sobolev inequalities \eqref{logsob}-\eqref{logsob2}.
As a first result we shall prove that, in the graph setting, some other (say classical) modified log-Sobolev inequality (which is known to be weaker than the usual log-Sobolev inequality \cite{BobkovTetali}, an inequality deeply studied in the literature) implies \eqref{logsob}.
Then, we may extend to our general setting the approach and some of the results of \cite{BGL} on the hypercontractivity of the Hamilton-Jacobi operator $\widetilde Q_t$. This will allow us to prove that, in particular, the modified log-Sobolev inequality \eqref{logsob} (resp. \eqref{logsob2}) implies the weak transport-entropy inequality \eqref{t2} (resp. \eqref{t22}).

%
%
%
%

\subsection{Connection with some classical inequalities, on graphs}

Given a (simple connected) graph $G=(V,E)$, recall that $K=(K(x,y))_{x,y \in V}$ is a matrix with positive entries if $K(x,y) \geq 0$ for all $x, y \in V$,  and that it is a Markovian matrix if in addition
$\sum_{y \in V} K(x,y)=1$ for all $x \in V$. Then, the couple
$(\mu,K)$ satisfies the (say) classical modified log-Sobolev inequality if there exists
a constant $C \in (0, \infty)$ such that for all $f \colon V \to \RR$ bounded it holds
\begin{equation} \label{usual-logsob}
\ent_\mu(e^f) \leq C \sum_{x,y \in V} (e^{f(y)} - e^{f(x)})(f(y)-f(x)) \mu(x) K(x,y) .
\end{equation}
The latter is known to be a consequence of Gross' Inequality that asserts that
\begin{equation} \label{gross}
\ent_\mu(f) \leq C' \sum_{x,y \in V} (f(y)-f(x))^2 \mu(x) K(x,y) \qquad \forall f \colon V \to \RR \quad\mbox{bounded} .
\end{equation}
More precisely Gross' Inequality \eqref{gross} with constant $C'$ implies the classical modified log-Sobolev inequality \eqref{usual-logsob} with constant $C \leq C'/4$, see \cite[Theorem 3.6]{BobkovTetali}.

\begin{Pro} \label{toto}
Let $\mu$ be a probability measure on a (simple connected) graph $G=(V,E)$ and $K$ be a matrix with positive entries.
Assume that there exists a constant $L$ such that $\sum_{y \in V} d^2(x,y) K(x,y) \leq L$ for all $x \in V$ and
that for all $x,y \in V$, $\mu(x)K(x,y)=\mu(y)K(y,x)$. Finally, assume that $(\mu,K)$ satisfies the classical  modified log-Sobolev inequality \eqref{usual-logsob} with constant $C$, respectively Gross' Inequality \eqref{gross} with constant $C'$. Then, $\mu$ satisfies the modified log-Sobolev inequality \eqref{logsob} with $\alpha(x)=\alpha^*(x)=x^2/2$ and constant $4LC$, respectively $LC'$.
\end{Pro}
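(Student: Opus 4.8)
The plan is to bound the Dirichlet-form-type right-hand sides of \eqref{usual-logsob} and \eqref{gross} \emph{from above} by a multiple of $\int|\widetilde\nabla f|^{2}e^{f}\,d\mu$, and then apply the hypothesis directly. Only two ingredients are needed: elementary one-variable estimates comparing increments of $e^{f}$ (resp.\ $e^{f/2}$) to increments of $f$, and the one-sided bound $[f(y)-f(x)]_{-}\leq d(x,y)\,|\widetilde\nabla f|(x)$, which holds for every $x,y\in V$ straight from the definition of $\widetilde\nabla$ and requires no regularity of $f$ (in particular for neighbours, where $d(x,y)=1$).

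For the first assertion, I would start from the elementary fact that $(e^{b}-e^{a})(b-a)\leq e^{a}(a-b)^{2}$ whenever $a\geq b$, a consequence of $1-e^{-s}\leq s$. Applying it with $\{a,b\}=\{f(x),f(y)\}$ gives, for every ordered pair $(x,y)$,
\[
(e^{f(y)}-e^{f(x)})(f(y)-f(x))\ \leq\ e^{f(x)}[f(y)-f(x)]_{-}^{2}+e^{f(y)}[f(x)-f(y)]_{-}^{2},
\]
since one term on the right is precisely the needed bound and the other is non-negative. Now bound $[f(y)-f(x)]_{-}\leq d(x,y)|\widetilde\nabla f|(x)$ and $[f(x)-f(y)]_{-}\leq d(x,y)|\widetilde\nabla f|(y)$, multiply by $\mu(x)K(x,y)$ and sum over $x,y$. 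Using $d(x,y)=d(y,x)$ and the reversibility $\mu(x)K(x,y)=\mu(y)K(y,x)$, the two resulting double sums coincide after swapping $x\leftrightarrow y$, so the total is at most
\[
2\sum_{x}e^{f(x)}|\widetilde\nabla f|^{2}(x)\,\mu(x)\sum_{y}d^{2}(x,y)K(x,y)\ \leq\ 2L\int|\widetilde\nabla f|^{2}e^{f}\,d\mu,
\]
by the assumption $\sum_{y}d^{2}(x,y)K(x,y)\leq L$. Together with \eqref{usual-logsob} this gives $\ent_{\mu}(e^{f})\leq 2LC\int|\widetilde\nabla f|^{2}e^{f}\,d\mu$, i.e.\ \eqref{logsob} with $\alpha^{*}(x)=x^{2}/2$ and constant $4LC$.

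For the second assertion, the quickest route is to combine the first assertion with the recalled implication that \eqref{gross} with constant $C'$ forces \eqref{usual-logsob} with constant $C'/4$ (\cite[Theorem 3.6]{BobkovTetali}); the first assertion then gives \eqref{logsob} with constant $4L\cdot(C'/4)=LC'$. One may also argue directly: apply \eqref{gross} to $e^{f/2}$ to obtain $\ent_{\mu}(e^{f})\leq C'\sum_{x,y}(e^{f(y)/2}-e^{f(x)/2})^{2}\mu(x)K(x,y)$, use the estimate $(e^{a/2}-e^{b/2})^{2}\leq\tfrac{1}{4}e^{a}(a-b)^{2}$ for $a\geq b$ (again from $1-e^{-s}\leq s$), and repeat the passage to negative parts, the gradient bound and the symmetrization exactly as above; this yields $\ent_{\mu}(e^{f})\leq\tfrac{LC'}{2}\int|\widetilde\nabla f|^{2}e^{f}\,d\mu$, again \eqref{logsob} with constant $LC'$.

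I do not expect a genuine obstacle here; the computations are routine. The only subtlety worth flagging is that the supremum defining $|\widetilde\nabla f|(x)$ runs over all vertices, not merely over neighbours of $x$ — but this works in our favour, since we only ever invoke the lower bound $|\widetilde\nabla f|(x)\geq[f(y)-f(x)]_{-}/d(x,y)$ for the particular $y$'s appearing in the sums, which is automatic. (Boundedness of $f$, finiteness of the degrees and $\sum_{y}d^{2}(x,y)K(x,y)\leq L$ ensure all the series converge absolutely, so the rearrangements are legitimate.)
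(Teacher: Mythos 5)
Your proof is correct and follows essentially the same route as the paper: you bound the discrete Dirichlet-type form by $2L\int|\widetilde\nabla f|^{2}e^{f}\,d\mu$ via the elementary increment inequality $(e^{a}-e^{b})(a-b)\leq\max(e^{a},e^{b})(a-b)^{2}$, the pointwise bound $[f(y)-f(x)]_{-}\leq d(x,y)|\widetilde\nabla f|(x)$, and the detailed-balance symmetrization, exactly as in the paper, and you handle the $\alpha^{*}(x)=x^{2}/2$ normalization correctly so the constants $4LC$ and $LC'$ come out right. The Gross case via the Bobkov--Tetali implication \eqref{gross}$\Rightarrow$\eqref{usual-logsob} with constant $C'/4$ is also the paper's argument, so there is nothing to add.
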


\begin{Rem}
The condition $\mu(x)K(x,y)=\mu(y)K(y,x)$, $x,y \in V$, is known as the detailed balance condition in the physics literature
and means that the operator $K$, acting on functions, is symmetric in $\mathbb{L}^2(\mu)$. Most commonly one deals with a Markovian matrix with nearest neighbor jumps (meaning that $K(x,y)=0$ unless $d(x,y)=1$), which guarantees that $L=1$. In particular the hypotheses of the proposition are very commonly used and correspond to a lot of practical situations \cite{Saloff}.
\end{Rem}

\begin{proof}
The result involving the Gross' inequality is an immediate consequence of the result involving the classical modified log-Sobolev inequality since the former implies the latter with $C' \leq C/4$.

Hence, we only need to show that
$$
\sum_{x,y \in V} (e^{f(y)} - e^{f(x)})(f(y)-f(x)) \mu(x) K(x,y)
\leq
2L \sum_{x \in V} |\widetilde{\nabla}f|^2(x)e^{f(x)} \mu(x) .
$$
Since $(a-b)(e^a-e^b)\leq (a-b)^2 \max \{e^a,e^b\}$, we have
\begin{align*}
&\sum_{x,y \in V} (e^{f(y)} - e^{f(x)})(f(y)-f(x)) \mu(x) K(x,y) \\
& \leq
\!\!\!\sum_{\genfrac{}{}{0pt}{}{x,y \in V:}{f(x) \geq f(y)}} (f(y)-f(x))^2 e^{f(x)} \mu(x) K(x,y)
 + \!\!\! \sum_{\genfrac{}{}{0pt}{}{x,y \in V:}{f(y) \geq f(x)}} (f(x)-f(y))^2 e^{f(y)} \mu(x) K(x,y) .
\end{align*}
Using the detailed balance condition ensures that
$$
\sum_{\genfrac{}{}{0pt}{}{x,y \in V:}{f(y) \geq f(x)}} (f(x)-f(y))^2 e^{f(y)} \mu(x) K(x,y)
=
\sum_{\genfrac{}{}{0pt}{}{x,y \in V:}{f(y) \geq f(x)}} (f(x)-f(y))^2 e^{f(y)} \mu(y) K(y,x)
$$
which, after a change of variable, implies that
$$
\sum_{x,y \in V} (e^{f(y)} - e^{f(x)})(f(y)-f(x)) \mu(x) K(x,y)
=
2 \!\!\! \sum_{\genfrac{}{}{0pt}{}{x,y \in V:}{f(x) \geq f(y)}} \!\!\! (f(y)-f(x))^2 e^{f(x)} \mu(x) K(x,y).
$$
Now, we observe that
\begin{align*}
\sum_{\genfrac{}{}{0pt}{}{x,y \in V:}{f(x) \geq f(y)}} \!\!\! (f(y)-f(x))^2 e^{f(x)} & \mu(x) K(x,y) \\
& =
\sum_{\genfrac{}{}{0pt}{}{x,y \in V:}{f(x) \geq f(y)}} \!\!\! \left( \frac{[f(y)-f(x)]_-}{d(x,y)} \right)^2 e^{f(x)} \mu(x) K(x,y) d(x,y)^2 \\
& \leq
\sum_{x\in V} |\widetilde \nabla f|^2(x) e^{f(x)} \mu(x) \sum_{y \in V} K(x,y) d(x,y)^2
\end{align*}
which leads to the desired result since $\sum_{y \in V} K(x,y) d(x,y)^2 \leq L$. The proof is complete.
\end{proof}


%

\subsection{Hypercontractivity property of the family of operators $(\exp\{\tQ_t\})_{t \geq 0}$: proof of Corollary \ref{cormain} and Theorem\ref{equivalencelogsobtalagrand}}\label{sec}

Using our main result on the Hamilton-Jacobi equation, we shall follow the line of proof of
\cite{BGL} to prove Corollary \ref{cormain}, namely that the family of operator $(\exp\{\tQ_t\})_{t \geq 0}$ enjoys some hypercontractivity property. As a byproduct we shall prove that the modified log-Sobolev inequality \eqref{logsob} implies the transport-entropy inequality \eqref{t2}, giving rise, thanks to Proposition \ref{toto} to a variety of non trivial examples satisfying such an inequality, on graphs.

\begin{proof}[Proof of Corollary \ref{cormain}]
We shall show that the modified log-Sobolev inequality \eqref{eqMLS-(2/c)intro} implies the hypercontractivity property \eqref{eq7intro} for positive $\rho$ and the modified log-Sobolev inequality \eqref{eqMLS-(2/c)intro1} implies the hypercontractivity property \eqref{eq7intro} for negative $\rho$ at the same time. To that purpose, fix $\rho \in \RR$ and, following \cite{BGL}, define
$$
F(t):=\frac{1}{k(t)}\log\left(\int  e^{k(t)\tQ_tf}\,d\mu\right), \qquad t \geq 0
$$
with $k(t):=\rho+(t/2C)$.
By Theorem \ref{the1}, $F$ is differentiable at every point $t>0$ when $\rho \geq 0$ and every $t\in (0,-\rho C/2)$ when $\rho \leq0$. For such points,
it holds
$$
F'(t)=\frac{k'(t)}{k(t)^2} \frac{1}{\int  e^{k(t)\tQ_tf} \,d\mu}
\left( \ent_\mu\left(e^{k(t)\tQ_tf}\right)
+\frac{k(t)^2}{k'(t)} \int  e^{k(t)\tQ_tf} \frac{\partial}{\partial t}\tQ_tf \,d\mu
\right).
$$
According to Theorem \ref{thm:main}, we have
\begin{align*}
\ent_\mu\left(e^{k(t)\tQ_tf}\right)
& +\frac{k(t)^2}{k'(t)} \int  e^{k(t)\tQ_tf} \frac{\partial}{\partial t}\tQ_tf \,d\mu \\
& \leq
\ent_\mu\left(e^{k(t)\tQ_tf}\right)
- \frac{k(t)^2}{2k'(t)} \int  |\widetilde{\nabla}\tQ_tf|^2 e^{k(t)\tQ_tf}  \,d\mu \\
& =
\ent_\mu\left(e^{k(t)\tQ_tf}\right)
- \frac{1}{2k'(t)} \int   \left|\widetilde{\nabla} \left[|k(t)|\tQ_tf \right]\right|^2 e^{k(t)\tQ_tf} \,d\mu
\end{align*}
where the last equality follows from Remark \ref{remark}.
Now we have two cases to deal with:
$(a)$ If $\rho\geq 0$ and $\mu$ satisfies $\eqref{eqMLS-(2/c)intro}$, then $|k(t)|=k(t)$. Hence, applying
the modified log-Sobolev inequality \eqref{eqMLS-(2/c)intro} leads to $F'(t)\leq 0$.
$(b)$  If $\rho\leq 0$ and $\mu$ satisfies $\eqref{eqMLS-(2/c)intro1}$, then $|k(t)|=-k(t)$. Hence applying the modified log-Sobolev inequality \eqref{eqMLS-(2/c)intro1} leads also to $F'(t)\leq 0$.
In both cases $F'(t) \leq 0$ implies $F(t) \leq F(0)$ which amounts to \eqref{eq7intro}.


Conversely, suppose that \eqref{eq7intro} holds for every $t\geq0$ when $\rho > 0$ (respectively every $t\in [0, -\rho C/2)$ when $\rho < 0$) .
Then, in the limit, \eqref{eq7intro} implies that $F'(0)\leq 0$ and thus (recall that $k'(t) =1/(2C) >0$)
$$
\ent_\mu\left(e^{k(0)\tQ_0f}\right)
+\frac{k(0)^2}{k'(0)} \int  e^{k(0)\tQ_0f} \frac{\partial}{\partial t}\tQ_tf |_{t=0}  \,d\mu \leq 0
$$
where we set $\tQ_0f:= \lim_{t \to 0} \tQ_tf$. By Theorem \ref{thm:main2}, since $\alpha(x)=x^2/2$, $\tQ_0f=f$ so that the latter is equivalent to
$$
\ent_\mu\left(e^{\rho f}\right)
+2\rho^2C \int  e^{\rho f} \frac{\partial}{\partial t}\tQ_tf |_{t=0}  \,d\mu \leq 0 .
$$
Now,  according to Theorem \ref{thm:main2},
$\frac{\partial}{\partial t}\tQ_tf(x)|_{t=0} = -\frac{1}{2}|\widetilde{\nabla}f|^2(x)$, $x \in X$ so that
$$
\ent_\mu\left(e^{\rho f}\right)
-C  \int  e^{\rho f} |\widetilde{\nabla}(|\rho|f)|^2 \,d\mu \leq 0 .
$$
This precisely amounts to proving \eqref{eqMLS-(2/c)intro} (respectively \eqref{eqMLS-(2/c)intro1}) when $\rho \geq 0$ (resp. $\rho \leq 0$).
The proof of Corollary \ref{cormain} is complete.
\end{proof}

\medskip

\begin{proof}[proof of Theorem\ref{equivalencelogsobtalagrand}]
In order to prove $(i)\Rightarrow (ii)$ of Theorem \ref{equivalencelogsobtalagrand}, we need to recall the following generalization of Bobkov-Gotze dual characterization borrowed from \cite[Theorem 5.5]{Dualdiscrete}:

Inequality \eqref{t2tilde} holds if and only if for all bounded continuous function $\varphi \colon X \to \RR$ it holds
\begin{equation} \label{dualform}
\int  \exp\left\{ \frac{2}{C}\tQ_1 \varphi \right\} \,d\mu \leq \exp \left\{ \frac{2}{C} \int  \varphi \,d\mu \right\} .
\end{equation}

Now,  \eqref{eq7intro} applied to $\rho=0$ and $t=1$ precisely amounts to
\eqref{dualform}, since by definition $\| g \|_0:= \exp\{\int \log g \,d\mu \}$ for $g \geq0$. Hence the result, thanks to
the dual characterization of \cite{Dualdiscrete}.

Now we turn to prove $(ii)\Rightarrow (i)$.
According to  \cite[Proposition 8.3]{Dualdiscrete}, $(ii)$ implies that for all $\lambda\in (0,1/C_2)$, the following inequality holds for all bounded lower semi continuous function $f$:
$$\ent_\mu(e^f)\leq \frac{1}{1-\lambda C}\int (f-R_c^\lambda f)e^fd\mu.$$
Here in our settings, $R_c^\lambda f(x):=\inf_{p\in \mathcal{P}(X)}\{\int f dp + \frac{\lambda}{2}(\int d(x,.)dp)^2\}=\widetilde{Q}_{1/\lambda}f(x)$.
According to \cite[Proposition 2.2]{FS2015}, $t\mapsto \widetilde{Q}_tf$ is convex. Thus, combining with theorem \ref{thm:main}, it holds
$$R_c^\lambda f-f=\widetilde{Q}_{1/\lambda}f-f\geq \frac{1}{\lambda}\frac{\partial}{\partial t}\widetilde{Q}_tf|_{t=0}=-\frac{1}{2\lambda}|\widetilde\nabla f|^2.$$
We deduce that
\begin{align*}
 \ent_\mu(e^f)&\leq \frac{1}{1-\lambda C}\int (f-R_c^\lambda f)e^fd\mu\\
 &\leq \frac{1}{2\lambda(1-\lambda C)}\int |\widetilde\nabla f|^2e^fd\mu.
\end{align*}
Optimizing $\lambda$ with $\lambda=\frac{1}{2C}$ yields the result.
\end{proof}

%

\begin{Rem}
Since $|\widetilde \nabla f|^2(x) \leq 1$ for any $1$-Lipschitz function, the usual Herbst argument (see \textit{e.g.} \cite[Chapter 7]{1}, \cite{BobkovTetali}) applies and leads to the following concentration result: if $\mu$ satisfies
the modified log-Sobolev inequality \eqref{logsob}, then any $1$-Lipschitz function $f \colon X \to \RR$
with $\int  f\,d\mu=0$ satisfies $\mu(f\geq h)\leq e^{-h^2/(4C)}$ for all $h \geq 0$.
\end{Rem}

\subsection{Poincar\'e inequality}\label{Poincare}

In this section, we prove that the Poincar\'e inequality \eqref{poincare} is equivalent to the transport-entropy inequality \eqref{logsob} with a quadratic-linear cost, a notion we define below. This will extend to our setting similar results known in the continuous, see \cite{BGL}.

\begin{Def}[Quadratic-linear cost function]\label{quadratic-linear}
A quadratic-linear cost function $\alpha_a^h:\RR^+\rightarrow \RR$,  $a,h>0$ is such that
$$
\alpha_a^h(x)=
\begin{cases}
ax^2 & x\leq h \\2ax-ah^2 & x>h .
\end{cases}
$$
\end{Def}

The main theorem of this section is the following.

\begin{The}\label{poincareT2}
Let $\mu$ be a probability measure on $X$. The following propositions are equivalent.
\begin{description}
\item $(i)$ There exists a constant $C_1>0$ such that $\mu$ satisfies the Poincar\'e inequality \eqref{poincare} with constant $C_1$.
\item $(ii)$ There exist constants $C_2,a,h>0$ such that $\mu$ satisfies the weak transport-entropy inequality  \eqref{logsob} with constant $C_2$ and cost $\alpha_a^h$.
\end{description}
More precisely,
\begin{description}
\item{-} $(ii)$ implies $(i)$ with $C_1=aC_2$;
\item{-} $(i)$ implies $(ii)$ with $C_2=K(c)/2$, $a=\frac{1}{4K(c)}$ and $h=2cK(c)$ for any
 $c<2/\sqrt{C_1}$ and
$$
K(c):=\frac{C_1}{2} \left(\frac{2+2e^2+c\sqrt{C_1}}{2-c\sqrt{C_1}}\right)^2e^{c\sqrt{5C_1}}.
$$
\end{description}
\end{The}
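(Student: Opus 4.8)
The plan is to establish the two implications separately, exploiting the fact that the quadratic-linear cost $\alpha_a^h$ behaves like the quadratic cost $ax^2$ near the origin (so its Fenchel--Legendre dual is quadratic near $0$) and linearly at infinity (so the dual has bounded domain). I would first record the elementary computation $(\alpha_a^h)^*(s) = s^2/(4a)$ for $s \in [0, 2ah]$ and $(\alpha_a^h)^*(s) = +\infty$ for $s > 2ah$; this is the key feature that links the modified log-Sobolev inequality \eqref{logsob} for $\alpha_a^h$ to a Poincar\'e-type inequality, because on small functions the dual cost is exactly quadratic.

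\textbf{$(ii)\Rightarrow(i)$.} Assume $\mu$ satisfies \eqref{logsob} with cost $\alpha_a^h$ and constant $C_2$. I would apply this inequality to $f = \varepsilon g$ with $g$ bounded and $\varepsilon \to 0$, in the spirit of the classical linearization of log-Sobolev into Poincar\'e. For $\varepsilon$ small enough, $\varepsilon|\widetilde\nabla g|$ lies in the region where $(\alpha_a^h)^* (s) = s^2/(4a)$, so the right-hand side becomes $\frac{C_2 \varepsilon^2}{4a}\int |\widetilde\nabla g|^2 e^{\varepsilon g}\,d\mu$. Expanding $\ent_\mu(e^{\varepsilon g}) = \frac{\varepsilon^2}{2}\var_\mu(g) + o(\varepsilon^2)$ and letting $\varepsilon \to 0$ yields $\var_\mu(g) \le \frac{C_2}{2a}\int |\widetilde\nabla g|^2\,d\mu$; hmm, tracking the constant carefully against the claimed $C_1 = aC_2$ will require attention, and I expect the honest computation gives a Poincar\'e constant of the stated form, possibly after using homogeneity $|\widetilde\nabla(\varepsilon g)| = \varepsilon|\widetilde\nabla g|$ from Remark \ref{remark}. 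One subtlety: the set where $\varepsilon|\widetilde\nabla g|(x) \le 2ah$ is all of $X$ once $\varepsilon \le 2ah/\|g\|_{\mathrm{Lip}}$, since $|\widetilde\nabla g| \le \|g\|_{\mathrm{Lip}}$; so no truncation issues arise.

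\textbf{$(i)\Rightarrow(ii)$.} This is the harder direction and I would follow the Hamilton--Jacobi route of \cite{BGL} adapted to $\widetilde Q_t$. Starting from the Poincar\'e inequality with constant $C_1$, the standard argument (Herbst-type, iterated) produces exponential integrability bounds: there exist explicit constants so that $\int e^{cg}\,d\mu \le $ (something controlled) for $1$-Lipschitz $g$ with $c$ below the critical threshold $2/\sqrt{C_1}$. The quantity $K(c)$ in the statement is precisely the kind of constant coming out of such an estimate — note the factor $e^{c\sqrt{5C_1}}$ and the denominator $2 - c\sqrt{C_1}$, which blows up as $c \uparrow 2/\sqrt{C_1}$, the classical radius of exponential integrability under Poincar\'e. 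With such a bound in hand, one proves the hypercontractivity/semigroup inequality $\int e^{\widetilde Q_t f}\,d\mu \le \exp\{\int f\,d\mu + \dots\}$ along the flow, using Theorem \ref{thm:main2} to differentiate $t \mapsto \frac{1}{k(t)}\log\int e^{k(t)\widetilde Q_t f}\,d\mu$ as in the proof of Corollary \ref{cormain}; the linear part of $\alpha_a^h$ is what makes the energy term $(\alpha_a^h)^*(|\widetilde\nabla \widetilde Q_t f|)$ bounded, hence compatible with the mere exponential (not Gaussian) integrability afforded by Poincar\'e. Finally one differentiates at $t=0$ to recover \eqref{logsob} for $\alpha_a^h$, and bookkeeping of constants yields $C_2 = K(c)/2$, $a = 1/(4K(c))$, $h = 2cK(c)$.

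\textbf{Main obstacle.} The delicate point is the precise propagation of constants in $(i)\Rightarrow(ii)$: one must (a) get the sharp exponential-integrability constant $K(c)$ from Poincar\'e with the correct dependence on $c$ and $C_1$, which requires a careful iteration (e.g.\ the argument of Aida--Stroock or the Bobkov--Ledoux refinement) rather than a soft estimate, and (b) verify that along the $\widetilde Q_t$-flow the gradient $|\widetilde\nabla \widetilde Q_t f|$ stays in the linear regime $[h,\infty)$ of $\alpha_a^h$ (or handle the crossover), so that the dual cost is genuinely the bounded function $(\alpha_a^h)^*$. Everything else is a transcription of the \cite{BGL} scheme with $Q_t$ replaced by $\widetilde Q_t$, which is legitimate precisely because Theorem \ref{thm:main2} gives the Hamilton--Jacobi inequality for $\widetilde Q_t$ with the general cost $\alpha$.
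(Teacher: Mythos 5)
Your proposal targets the wrong inequality in one direction and is missing the key ingredient in the other. Item $(ii)$ of Theorem \ref{poincareT2} is the \emph{weak transport-entropy inequality} with cost $\alpha_a^h$ (the reference to \eqref{logsob} in the statement is a slip for \eqref{t2}: see the first sentence of Section \ref{Poincare}, Remark \ref{rem:poincare}, and the fact that the claimed constant $C_2=K(c)/2$ is half of the log-Sobolev constant $K(c)$ of Theorem \ref{poincarelog}). Your argument throughout treats $(ii)$ as the modified log-Sobolev inequality with cost $\alpha_a^h$, and consequently never touches the transport inequality: the dual characterization \eqref{dualform} of \cite{Dualdiscrete}, which is the bridge used in both directions of the paper's proof, appears nowhere in your plan. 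Concretely, for $(ii)\Rightarrow(i)$ the paper dualizes \eqref{t2} with cost $\alpha_a^h$ into $\int\exp\{\frac{2}{C_2}\widetilde{Q}_1\phi\}\,d\mu\le\exp\{\frac{2}{C_2}\int\phi\,d\mu\}$, applies this to $\phi=tf$ with $f$ Lipschitz, uses Lemma \ref{lemQ1Qt} to write $\widetilde{Q}_1(tf)=t\widetilde{Q}_tf$ for small $t$, expands to second order in $t$ and identifies the quadratic coefficient through Theorem \ref{thm:main2}. Your $\varepsilon$-linearization of an entropy inequality would only become available after first deducing that entropy inequality from the transport inequality, a nontrivial step you do not supply (and even granting your reading, your own computation yields $C_1=C_2/(2a)$, not the claimed $C_1=aC_2$).

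For $(i)\Rightarrow(ii)$ the genuine gap is that you feed the Bobkov--Gentil--Ledoux differentiation only with ``exponential integrability bounds'' coming from Poincar\'e. Exponential integrability of Lipschitz functions is not enough to show that $t\mapsto\frac{1}{k(t)}\log\int e^{k(t)\widetilde{Q}_tf}\,d\mu$ is nonincreasing: at that point one needs an entropy--energy inequality, namely $\ent_\mu(e^g)\le K(c)\int(\alpha_a^h)^*(|\widetilde{\nabla}g|)e^g\,d\mu$, which is exactly Theorem \ref{poincarelog}. Establishing it is the technical heart of the paper's argument (the adaptation of Bobkov--Ledoux \cite{BL} to the gradient $\widetilde{\nabla}$, via Propositions \ref{proef}, \ref{propBL1} and \ref{propBL}, where the lack of a chain rule is what produces the constant $K(c)$); your ``main obstacle (a)'' points in this direction but mislabels the needed input as an integrability estimate. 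Your closing step, ``differentiate at $t=0$ to recover \eqref{logsob}'', is also backwards: the entropy inequality is the \emph{input} to the hypercontractivity argument, and the output needed for $(ii)$ is the case $\rho=0$, $t=1$ of \eqref{eq7intro}, converted into the transport inequality by \eqref{dualform}. A minor point: in your obstacle (b), what must be controlled is that the relevant slopes stay where $(\alpha_a^h)^*$ is finite, i.e.\ in $[0,2ah]=[0,c]$, which the paper handles via the bound $(\alpha_a^h)^*(\lambda u)\le\lambda^2(\alpha_a^h)^*(u)$ for $u\le 2ah$, not that $|\widetilde{\nabla}\widetilde{Q}_tf|$ lies in the linear regime of $\alpha_a^h$.
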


\begin{Rem} \label{rem:poincare}
As a direct consequence of the above theorem, we observe that the weak transport-entropy inequality \eqref{t2} with cost function $\alpha(x):=\frac{x^2}{2}$ and constant $C$ implies the Poincar\'e inequality \eqref{poincare} with constant $C/2$. Indeed, since $\alpha(x)= \frac{x^2}{2}\geq \alpha_{1/2}^2(x)$, the weak transport-entropy inequality $\widetilde{T}_2(C)$ implies $\widetilde{T}_{\alpha_{1/2}^2}(C)$ and the conclusion follows from Item $(ii)$ of Theorem \ref{poincareT2}.
\end{Rem}

The proof of Theorem \eqref{poincareT2} relies on a characterization of the Poincar\'e Inequality
\eqref{poincare} in term of a modified log-Sobolev inequality with quadratic-linear cost, of independent interest.
Such a characterization is an extension of a well known result of Bobkov and Ledoux \cite{BL}.

\begin{The}\label{poincarelog}
A probability measure $\mu$ on $X$ satisfies the Poincar\'e Inequality \eqref{poincareT2}
with constant $C$ if and only if $\mu$ satisfies the modified log-Sobolev inequality \eqref{logsob} with constant $C'$ and cost $\alpha_a^h$. 
More precisely
\begin{description}
\item{-} \eqref{poincareT2}  implies \eqref{logsob} with $C'=K(c)$, $a=\frac{1}{4K(c)}$ and $h=2cK(c)$ for any $c<2/\sqrt{C}$ with $K(c)$ defined in theorem \ref{poincareT2};
\item{-} \eqref{logsob} implies \eqref{poincareT2} with $C=C'$.
\end{description}
\end{The}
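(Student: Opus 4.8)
The plan is to treat the two implications of Theorem~\ref{poincarelog} separately. I would obtain \eqref{logsob}$\Rightarrow$\eqref{poincareT2} (that is, the modified log-Sobolev inequality with cost $\alpha_a^h$ implies Poincaré) by linearisation, and the reverse implication by transporting to the present setting the argument of Bobkov and Ledoux~\cite{BL}, the only structural change being that their use of the Leibniz/chain rule for $|\nabla|$ is replaced by the one-sided chain rule for $\widetilde\nabla$ established in Proposition~\ref{propnabla}.

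For the easy direction I would fix a bounded function $g$ with $\|g\|_{Lip}<\infty$ and apply \eqref{logsob} to $f=\epsilon g$ for small $\epsilon>0$. Since $|\widetilde\nabla(\epsilon g)|=\epsilon|\widetilde\nabla g|\le\epsilon\|g\|_{Lip}$, for $\epsilon$ small enough $\epsilon|\widetilde\nabla g|$ stays in the interval $[0,h]$ on which $\alpha_a^h$ equals $x\mapsto ax^2$, so the right-hand side of \eqref{logsob} becomes $aC'\epsilon^2\int|\widetilde\nabla g|^2e^{\epsilon g}\,d\mu=aC'\epsilon^2\int|\widetilde\nabla g|^2\,d\mu+o(\epsilon^2)$, while a second order Taylor expansion gives $\ent_\mu(e^{\epsilon g})=\tfrac{\epsilon^2}{2}\var_\mu(g)+o(\epsilon^2)$. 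Dividing by $\epsilon^2$ and letting $\epsilon\to0$ then yields the Poincaré inequality \eqref{poincare} (with the constant claimed in the statement), and the standard truncation argument removes the Lipschitz assumption; only the behaviour of $\alpha_a^h$ near the origin is used.

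For the hard direction, the seed estimate is obtained by plugging $e^{f/2}$ into the Poincaré inequality \eqref{poincare} and invoking Proposition~\ref{propnabla}(i) with the increasing map $G(u)=e^{u/2}$, for which $|\widetilde\nabla G|(u)=\tfrac12 e^{u/2}$: this gives $\var_\mu(e^{f/2})\le\tfrac{C}{4}\int|\widetilde\nabla f|^2e^f\,d\mu$, hence, whenever $\|\widetilde\nabla f\|_\infty<2/\sqrt C$, the rearranged bound $\big(1-\tfrac{C}{4}\|\widetilde\nabla f\|_\infty^2\big)\int e^f\,d\mu\le\big(\int e^{f/2}\,d\mu\big)^2$. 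I would then apply this to $\theta f$ for $\theta$ in a suitable range and iterate along the dyadic scales $\theta,\theta/2,\theta/4,\dots$ --- equivalently, integrate the differential inequality for $\theta\mapsto\tfrac1\theta\log\int e^{\theta f}\,d\mu$ and sum the resulting geometric series --- to reach, as in \cite{BL}, a sharp exponential-moment bound $\log\int e^{\theta f}\,d\mu\le\theta\int f\,d\mu+\Phi(\theta)$; from there, using the convexity of $\theta\mapsto\log\int e^{\theta f}\,d\mu$ and optimising over a free parameter, one extracts \eqref{logsob} with cost $\alpha_a^h$. The quadratic--linear shape of $\alpha_a^h$ is essential here rather than cosmetic: since Poincaré is strictly weaker than the log-Sobolev inequality, the part of the space where $|\widetilde\nabla f|$ is large can be controlled only linearly, which is exactly what the affine piece $x\mapsto 2ax-ah^2$ supplies; the explicit values $C'=K(c)$, $a=1/(4K(c))$, $h=2cK(c)$ (with $K(c)$ as in Theorem~\ref{poincareT2} and $c<2/\sqrt C$) are the output of optimising the scale parameter in this iteration, and the factors $2-c\sqrt C$ in $K(c)$ echo the positivity constraint $1-\tfrac{C}{4}\|\widetilde\nabla f\|_\infty^2>0$ in the seed estimate.

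The step I expect to be the main obstacle is this last one: carrying the Bobkov--Ledoux iteration through with the explicit constants, and in particular checking that the quadratic and linear regimes of $\alpha_a^h$ glue together for functions whose gradient is large only on part of the space --- the place where the precise shape of the quadratic--linear cost is genuinely used. A secondary point to watch is that $\widetilde\nabla$ obeys only a one-sided chain rule and that $|\widetilde\nabla(-f)|\neq|\widetilde\nabla f|$ in general, so each use of Proposition~\ref{propnabla} must be with a monotone $G$ oriented so as to keep the inequalities pointing the right way; this is automatic here because the only $G$ needed is the increasing map $u\mapsto e^{u/2}$ and it is used only to bound a variance from above.
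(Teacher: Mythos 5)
Your easy direction (linearising \eqref{logsob} at $f=\epsilon g$) is the standard argument and is essentially what is intended, modulo bookkeeping of constants (note $(\alpha_a^h)^*(y)=y^2/(4a)$ near $0$, so what linearisation literally yields is $\var_\mu(g)\leq \frac{C'}{2a}\int|\widetilde\nabla g|^2\,d\mu$). The hard direction, however, has a genuine gap at its final step. After the correct seed estimate $\var_\mu(e^{f/2})\leq\frac{C}{4}\int|\widetilde\nabla f|^2e^f\,d\mu$ you immediately pass to $\|\widetilde\nabla f\|_\infty$, iterate along dyadic scales, and arrive at an exponential-moment bound $\log\int e^{\theta f}\,d\mu\leq\theta\int f\,d\mu+\Phi(\theta)$ for Lipschitz $f$, from which you propose to ``extract'' \eqref{logsob} by convexity in $\theta$ and optimisation of a free parameter. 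This cannot work: once everything about $f$ except $\int f\,d\mu$ and $\|\widetilde\nabla f\|_\infty$ has been discarded, the most such a bound can return is an estimate of the form $\ent_\mu(e^f)\leq\Phi\bigl(\|\widetilde\nabla f\|_\infty\bigr)\int e^f\,d\mu$, a concentration-type statement, whereas \eqref{logsob} demands the local quantity $\int(\alpha_a^h)^*(|\widetilde\nabla f|)\,e^f\,d\mu$ on the right-hand side; exponential-moment bounds for Lipschitz functions are strictly weaker than \eqref{logsob} (which, by your own easy direction, gives back a Poincar\'e inequality, something concentration alone does not). What your sketch reproduces is the exponential-integrability part of Bobkov--Ledoux, not the part that produces the modified log-Sobolev inequality, and acknowledging ``explicit constants'' as the main obstacle understates the problem: the route itself is insufficient.

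The paper's appendix keeps the local gradient throughout. Assuming $\int f\,d\mu=0$ and $|\widetilde\nabla f|\leq c<2/\sqrt{C}$, it bounds $\ent_\mu(e^f)\leq\int\bigl(\int_0^1 t f^2 e^{tf}\,dt\bigr)d\mu$, controls $\phi(t)=\int f^2e^{tf}\,d\mu$ on $[0,1]$ by $\phi(1)$ via convexity of $\phi$ together with the comparison $\int f^2\,d\mu\leq e^{c\sqrt{5C}}\int f^2e^{-|f|}\,d\mu$ (Proposition \ref{propBL}), and then bounds $\int f^2e^f\,d\mu\leq C\bigl(\frac{2+2e^2+c\sqrt{C}}{2-c\sqrt{C}}\bigr)^2\int|\widetilde\nabla f|^2e^f\,d\mu$ (Proposition \ref{propBL1}); the latter is where Poincar\'e is applied to $e^{f/2}$ and to a monotone modification $H$ of $u\mapsto ue^{u/2}$ (Proposition \ref{proef}), the surrogate $H$ being needed precisely because $\widetilde\nabla$ obeys only a one-sided chain rule. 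Finally, your reading of the role of the quadratic--linear cost is off: with $a=1/(4K(c))$ and $h=2cK(c)$ one has $(\alpha_a^h)^*(y)=K(c)y^2$ for $y\leq 2ah=c$ and $+\infty$ beyond, so \eqref{logsob} with this cost is exactly the entropy inequality above for functions with $|\widetilde\nabla f|\leq c$ everywhere and is vacuous as soon as the gradient exceeds $c$ somewhere; no ``gluing'' of a quadratic and a linear regime on different parts of the space is involved.
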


We observe that, with respect to \cite{BL} there is a loss in the constant $K(c)$. This is technical. Indeed, the proof of Bobkov and Ledoux cannot be extended directly and one has to be careful in many points. Since the proof of
Theorem \ref{poincarelog} deals only with properties of $\widetilde \nabla$ and not with the Hamilton-Jacobi equation, and because it is long and technical, we decided to postpone it to the appendix.

\begin{proof}[Proof of Theorem \ref{poincareT2}]
We will first prove that $(i)$ implies $(ii)$.
Fix $c <2/\sqrt{C}$ and set $C=C_1$, $a=\frac{1}{4K(c)}$ and $h=2cK(c)$.
Thanks to Theorem \ref{poincarelog} for all $f \colon X \to \RR$ bounded, it holds
\begin{equation*}
\ent_\mu(e^f)\leq K(c)\int  (\alpha_a^{h})^*(|\widetilde{\nabla}f|)e^f\,d\mu .
\end{equation*}
Arguing  as in the proof of Corollary \ref{cormain} (see Section\ref{sec}) with $k(t)=2t/K(c)$, and using the
fact\footnote{For the reader convenience we observe that
$(\alpha_a^{h})^*(x)=K(c)x^2$ if   $|x|\leq c$ and  $(\alpha_a^{h})^*(x)=+\infty$ otherwise.} that $(\alpha_a^{h})^*(\lambda u)\leq \lambda^2 (\alpha_a^{h})^*(u)$ as soon as $u\leq 2ah$, we obtain (details are left to the reader) that the family of operators $(\exp\{ \widetilde Q_t\})_{t \geq 0}$, with $\widetilde Q$ defined with the cost $\alpha_a^h$, is hypercontractive which in turn guarantees that
$$
\int  \exp\left\{ \frac{2}{K(c)}\tQ_1 f \right\} d\mu \leq \exp \left\{ \frac{2}{K(c)} \int  f \,d\mu \right\}
$$
for all bounded function $f$. The conclusion follows from the dual characterization of \cite{Dualdiscrete} (that we recalled in \eqref{dualform}).

Next we prove that  $(ii)\Rightarrow (i)$.
By an easy argument it is enough to prove \eqref{poincare} for all bounded Lipschitz function $f$ on $X$. According to \cite{Dualdiscrete} (see \eqref{dualform}), the transport-entropy inequality \eqref{logsob}, with cost $(\alpha_a^h)^*$, is equivalent to say that for all continuous bounded function $\phi$ on $X$
 it holds
$$
\int \exp\left\{\frac{2}{C_2}\tQ_1\phi\right\}d\mu\leq \exp\left\{\int  \frac{2}{C_2}\phi \,d\mu\right\}
$$
where $\widetilde Q$ is defined with the cost $\alpha_a^h$.
Fix $l>0$, let $f$ be a $l$-Lipschitz function and set $\phi:=tf$. The latter inequality reduces to
$\int \exp\left\{\frac{2}{C_2}\tQ_1tf\right\}d\mu\leq \exp\left\{\int  \frac{2}{C_2}tf \,d\mu\right\}$.
Hence, for $t<(ah)/l$, by Lemma \ref{lemQ1Qt} below, we get
$$
\int \exp\left\{\frac{2}{C_2}t\tQ_tf\right\}\,d\mu\leq \exp\left\{\int  \frac{2}{C_2}tf \,d\mu\right\}.
$$
An expansion around $t=0$ yields that
\begin{multline}
 \int  \left(1+\frac{2}{C_2}tf+\frac{1}{2}t^2\left(\frac{4}{C_2^2}f^2+\frac{4}{C_2}\frac{\partial}{\partial t}\tQ_tf_{|_{t=0}}\right)+o(t^2)\right)d\mu \\
 \leq 1+t\frac{2}{C_2}\int  f\,d\mu+\frac{1}{2}t^2\frac{4}{C_2^2}\int  f\,d\mu+o(t^2).
\end{multline}
Therefore (comparing the coefficients of $t^2$), it holds
$\var_\mu(f)\leq -C_2\int  \frac{\partial}{\partial t}\tQ_tf_{|_{t=0}}d\mu$. Applying Theorem \ref{thm:main2} we arrive at $\var_\mu(f)\leq C_2\int  \alpha_a^{h*}\left(|\widetilde \nabla f|\right)\, d\mu$, which in turn,
%
since $\alpha_a^{h*}\left(|\widetilde \nabla f|(x)\right)=a\left(|\widetilde \nabla f|(x)\right)^2$ for $l\leq ah$,  implies that for all $ah$-Lipschitz function $f$, it holds
 $$
 \var_\mu(f)\leq aC_2\int  |\widetilde \nabla f|^2\,d\mu.
 $$
 Replacing $f$ by $\lambda f$ with $\lambda \in \RR^+$, we conclude that the above inequality holds for all Lipschitz function $f$ and thus $\mu$ satisfies the Poincar\'e inequality with constant $aC_2$.
This ends the proof of the theorem.
\end{proof}

\begin{Lem}\label{lemQ1Qt}
Let $f$ be an $l$-Lipschitz function and $\tQ_t$ be the inf-convolution for a quadratic-linear cost function $\alpha_{a}^{h}$, $a,h >0$.
Then, for all $x\in X$ and all $t< (ah)/l$, it holds
$\tQ_1(tf)(x)=t\tQ_tf(x)$.
\end{Lem}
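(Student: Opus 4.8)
The plan is to recast both $\tQ_1(tf)(x)$ and $t\,\tQ_tf(x)$ as one-dimensional minimization problems in the effective radius $u=\int d(x,y)\,p(dy)$, by means of Item~$(i)$ of Theorem~\ref{the1}, and then to check that the hypothesis $t<ah/l$ confines the minimizing radii to the range $u\leq h$, $u/t\leq h$, where $\alpha_a^h$ is purely quadratic and the two problems share the same integrand. Since $t>0$ one has $(tf)_x=t\,f_x$, hence $\widetilde{(tf)}_x=t\,\widetilde{f}_x$, so Theorem~\ref{the1}~$(i)$ applied with the cost $\alpha_a^h$, first at time $1$ and then at time $t$, gives
\[
\tQ_1(tf)(x)=\inf_{u\geq0}\Big\{\,t\,\widetilde{f}_x(u)+\alpha_a^h(u)\,\Big\},
\qquad
t\,\tQ_tf(x)=\inf_{u\geq0}\Big\{\,t\,\widetilde{f}_x(u)+t^2\alpha_a^h\!\big(u/t\big)\,\Big\}.
\]
On the interval $[0,\,h\min(1,t)]$ one has $u\leq h$ and $u/t\leq h$, so $\alpha_a^h(u)=au^2=t^2\alpha_a^h(u/t)$, and the two functions being minimized coincide there; it therefore suffices to show that each of the two infima is attained at some point of $[0,\,h\min(1,t)]$.

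To localize the minimizers I would exploit the Lipschitz bound (and use, as one must, that $l\leq 2ah$ — a harmless restriction, met in all the applications, where in fact $l\leq ah$). Since $f$ is $l$-Lipschitz, $f_x(u)\geq f(x)-lu$ for all $u$; the right-hand side is affine, hence lies below the convex hull, so $\widetilde{f}_x(u)\geq f(x)-lu$ on $\RR^+$ with equality at $u=0$, and by convexity every one-sided derivative of $\widetilde{f}_x$ is $\geq -l$. Both maps above are convex and, by a short computation using $t<ah/l$ and $l\leq 2ah$, they are strictly increasing on the linear branch of the cost; hence their infima are attained. On the common quadratic branch $u\mapsto t\,\widetilde{f}_x(u)+au^2$, a minimizer $u_*$ satisfies $t\,\tfrac{d}{du_-}\widetilde{f}_x(u_*)+2au_*\leq0$, which together with $\tfrac{d}{du_-}\widetilde{f}_x(u_*)\geq -l$ yields $u_*\leq tl/(2a)$; and $tl/(2a)\leq h\min(1,t)$, since for $t\geq1$ this is immediate from $t<ah/l$, while for $t<1$ it is exactly $l\leq 2ah$. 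Thus both infima are attained in $[0,\,h\min(1,t)]$.

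Restricting both infima to $[0,\,h\min(1,t)]$ is then legitimate, and on that set the two integrands agree, so
\[
\tQ_1(tf)(x)=\inf_{0\leq u\leq h\min(1,t)}\big\{\,t\,\widetilde{f}_x(u)+au^2\,\big\}=t\,\tQ_tf(x),
\]
which is the desired identity.

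I expect the localization step to be the main obstacle: one has to track the positions of the two kinks $u=h$ and $u/t=h$ relative to the minimizers, cope with the one-sided derivatives of $\widetilde{f}_x$ (convex but not differentiable) and of $\alpha_a^h$ at its corner, and distinguish the regimes $t\leq1$ and $t\geq1$ — the window $h\min(1,t)$ being $th$ in the former and $h$ in the latter — and it is precisely in this bookkeeping that the quantitative assumption $t<ah/l$ is used.
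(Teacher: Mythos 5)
Your proof is correct under the extra hypothesis you flag, and it takes a genuinely different route from the paper's. The paper argues at the level of optimizing measures: for $p\in m_{tf}(1,x)$ and $q\in m_f(t,x)$ it tests against $\delta_x$, deduces a bound of the form $\alpha_a^h(u)\leq ah\,u$ for the optimal mean radius, hence that the optimum sits on the quadratic branch, and then rewrites both inf-convolutions with the purely quadratic cost (legitimate since $au^2\geq \alpha_a^h(u)$), after which the identity is a trivial scaling. You instead pass to the one-dimensional problem via Item $(i)$ of Theorem \ref{the1} and localize the minimizing radius by the slope bound $\frac{d}{du_-}\widetilde{f}_x\geq -l$; this costs you the bookkeeping with the two kinks $u=h$ and $u=th$, but it makes completely transparent where $t<ah/l$ is used and where it is not enough. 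Two small points: when $l=2ah$ your map $u\mapsto t\widetilde{f}_x(u)+t^2\alpha_a^h(u/t)$ is only non-decreasing (not strictly increasing) on the linear branch, which still suffices; and attainment of the infima uses, as in the paper, that $f$ is bounded below.

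More importantly, the restriction $l\leq 2ah$ that you add is not cosmetic: the lemma as printed is false without some such condition, and your localization of the time-$t$ problem is exactly where this surfaces. Take $X=[0,1]$, $x=0$, $a=h=1$, $f(y)=-10y$ (so $l=10$) and $t=1/20<ah/l=1/10$. Since $f$ is linear and $d(0,y)=y$, both sides reduce exactly to one-dimensional minima over $u\in[0,1]$, and one computes $\tQ_1(tf)(0)=\min_{0\leq u\leq 1}\{u^2-\tfrac{u}{2}\}=-\tfrac{1}{16}$, whereas $t\,\tQ_tf(0)=\tfrac{1}{20}\min_{0\leq u\leq1}\{-10u+\tfrac{1}{20}\alpha_1^1(20u)\}=\tfrac{1}{20}\left(-8-\tfrac{1}{20}\right)=-\tfrac{161}{400}$, so the two sides differ. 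The paper's own proof glosses over this: in the chain of inequalities for $q\in m_f(t,x)$ the factor $t$ is dropped on the left-hand side (the correct inequality is $t\,\alpha_a^h\bigl(\int d(x,y)\,q(dy)/t\bigr)\leq\int (f(x)-f(y))\,q(dy)$), and once restored one only gets $\alpha_a^h(v)\leq l\,v$ for $v=\int d(x,y)\,q(dy)/t$, which forces $v\leq h$ only when $l\leq ah$ (for instance when $t\geq1$), not under the sole hypothesis $tl<ah$. As you observe, this is harmless downstream: in the proof of Theorem \ref{poincareT2} the lemma is only exploited for functions with Lipschitz constant at most $ah$, where both your argument and the (corrected) paper argument apply; but the statement of Lemma \ref{lemQ1Qt} should carry the additional restriction $l\leq 2ah$ (or $l\leq ah$), and your proof, unlike the paper's, makes that visible.
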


\begin{proof}
Fix $t<ah/l$ and $x \in X$.
For all $p\in m_{tf}(1,x)$ (defined in \eqref{defm}) we have by Item $(i)$ of Theorem \ref{the1}
$$
\int  tf(y)\,p(dy)+\alpha_a^h\left(\int d(x,y)\,p(dy)\right) = \widetilde Q_1 (tf)(x) \leq tf(x) .
$$
Hence
\begin{align*}
\alpha_a^h\left(\int d(x,y)\,p(dy)\right)
&\leq
t\int f(x)-f(y)\,p(dy)\leq t|\widetilde \nabla f|(x)\int d(x,y)\,p(dy)\\
&\leq
tl\int d(x,y)\,p(dy)\leq ah\int d(x,y)\,p(dy),
\end{align*}
where we used that  $f(x)-f(y) \leq |\widetilde \nabla f|(x)d(x,y)$ and the fact that $f$ is $l$-Lipschitz.
Since for quadratic-linear cost  $\alpha_a^h(u)\leq ahu$ if and only if $u\leq h$, the above inequality implies that  $\int d(x,y)\,p(dy)\leq h$ and that $\alpha_a^h\left(\int d(x,y)\,p(dy)\right)=a\left(\int d(x,y)\,p(dy)\right)^2$. Therefore
$$
\tQ_1(tf)(x)=\inf_{p\in \mathcal{P}(X)}\left\{\int  tf\,dp+a\left(\int d(x,y)\,p(dy)\right)^2\right\}.
$$
Similarly  for all $q\in m_{f}(t,x)$ it holds
$$
\int  f(y)\,\,q(dy)+t\alpha_a^h\left(\frac{\int d(x,y)\,q(dy)}{t}\right) \leq f(x) .
$$
Therefore
\begin{align*}
\alpha_a^h\left(\frac{\int d(x,y)\,q(dy)}{t}\right)
&\leq
\int f(x)-f(y)\,q(dy)
\leq
|\widetilde \nabla f|(x)\int d(x,y)\,q(dy)\\
&\leq
l\int d(x,y)\,p(dy)
\leq
\frac{ah}{t}\int d(x,y)\,q(dy).
\end{align*}
This (due to the specific shape of the quadratic-linear cost) leads to
$\int d(x,y)\,q(dy)/t\leq h$ and $\alpha_a^h\left(\frac{\int d(x,y)\,q(dy)}{t}\right)=a\left(\frac{\int d(x,y)\,q(dy)}{t}\right)^2$. Therefore,
$$
\tQ_t f(x)=\inf_{p\in \mathcal{P}(X)}\left\{\int  f\,dp+ \frac{a}{t}\left(\int d(x,y)\,p(dy)\right)^2\right\}.
$$
As a conclusion,
\begin{align*}
t\tQ_tf(x)
&=
t\inf_{q\in \mathcal{P}(X)}\left\{\int  f\,dq+\frac{a}{t}\left(\int d(x,y)\,p(dy)\right)^2\right\}\\
&=
\inf_{p\in \mathcal{P}(X)}\left\{\int  tf\,dp+a\left(\int d(x,y)\,p(dy)\right)^2\right\}=\tQ_1(tf)(x).
\end{align*}
\end{proof}

\section{Examples} \label{sec:examples}

In this section, we give some examples of application. In  particular, we shall see that our theorems are optimal in many situations. More precisely the first two examples deal with equality versus strict inequality in Theorem
\ref{thm:main2}. The other examples are more concerned with functional inequalities.


\subsubsection*{Example of $\RR^n$, equality case}

Let $\alpha(x)=x^2/2$, $x\in \RR^+$ and $f \colon \RR^n \to \RR$ convex. Then for all $t\geq 0$,
\[
\frac{\partial}{\partial t}\tQ_tf(x)+ \frac{1}{2} |\widetilde{\nabla}\tQ_tf|^2(x) = 0,
\]
\textit{i.e.} there is actually equality in Item $(i)$ of Theorem \ref{thm:main2}.

To prove this fact, we observe first that, since $\lim_{h\rightarrow \infty} \alpha'(h)= \infty$, the thesis follows from Item $(ii)$ of Theorem \ref{thm:main2}  when $t=0$.
For $t>0$,  since $f$ is convex, Proposition \ref{exconvex} ensures that $\tQ_tf=Q_tf$. Moreover, for all convex function $f$, $Q_tf$ is a convex function which guarantees that
$|\widetilde{\nabla}Q_tf|=|\nabla Q_tf|$ (where $|\nabla \cdot|$ is the Euclidean length of the usual gradient). Hence, the claim follows from the classical Hamilton-Jacobi equation that precisely asserts that for $t>0$, $\frac{\partial}{\partial t}Q_tf(x)+ \frac{1}{2} |\nabla Q_tf|^2(x) = 0$.


\subsubsection*{Example of the two points space $\{0,1\}$, strict inequality case}

Let $\alpha(x)=x^2/2$ and $X=\{0,1\}$ (the graph consisting of two points). Consider $f$ such that $f(0)=1$ and $f(1)=0$. It is easy to see that for $t\in(0,1)$, $\tQ_tf(0)=1-\frac{t}{2}$ and $\tQ_tf(1)=0$. It leads to $|\widetilde{\nabla}\tQ_tf|(0)=1-\frac{t}{2}$ and $\frac{\partial}{\partial t}\tQ_tf(0)=-\frac{1}{2}$. Thus, for all $t\in(0,1)$, $\frac{\partial}{\partial t}\tQ_tf(0)+ \frac{1}{2} |\widetilde{\nabla}\tQ_tf|^2(0) <0 $,
\textit{i.e.} the inequality in Item $(i)$ of Theorem \ref{thm:main2} is strict.
We observe that, more generally, the same conclusion holds as soon as $X$ has at least one isolated point $x_o$ (take $f$ with $f(x_o)=0$ and $f(y)=1$  for all $y\neq x_o$).

\bigskip

Next we give examples of measures satisfying log-Sobolev/Poincar\'e/transport-entropy type inequalities.

\subsubsection*{Measures satisfying the log-Sobolev inequality \eqref{t2tilde} and the transport-entropy \eqref{t2tilde}}

As already mentioned, the classical log-Sobolev inequality \eqref{gross} implies the (say) classical modified log-Sobolev inequality  \eqref{usual-logsob} which, thanks to Proposition \ref{toto} implies under mild assumptions the modified log-Sobolev inequality \eqref{logsob}, which finally, thanks to Corollary \ref{cormain}, implies the transport-entropy inequality \eqref{t2tilde}.
The latter is usually hard to obtain directly. The above chain of implication applies to a lot of different situations, including highly non-trivial examples. Let us mention random walks on the hypercube, on the symmetric group or the complete graph (see \cite{BobkovTetali} where optimal (or almost optimal) bounds are given for \eqref{usual-logsob}) the optimal bound in  \eqref{gross} for the lamplighter graph can be found in \cite{Abakoumov}, and in \cite{martinelli2} for the Ising model at high temperature, on the lattice or on trees.
Many other examples can be found in \cite{diaconis}... Bound on the constant in the tranport-entropy inequality \eqref{t2tilde} are new for all examples listed above, to the best of our knowledge.

\medskip

As an illustration, consider the uniform measure $\mu \equiv 1/2^n$ on the hypercube $\{0,1\}$ associated to the Markov chain that jumps from $x$ to anyone of its nearest neighbors (\textit{i.e.} any string $x'$ that differs from $x$ in exactly one coordinate) with equal probability ($1/n$). Then $\mu$ satisfies Gross' Inequality \eqref{gross} with constant $n/2$ \cite{G75}, the classical modified log-Sobolev inequality
\eqref{usual-logsob} with constant $n/8$ \cite{BobkovTetali}, and thus, by Proposition \ref{toto} (note that $L=1$), the modified log-Sobolev inequality \eqref{t2tilde} with constant $n/4$, and in turn, thanks to Corollary \ref{cormain}, the transport-entropy inequality \eqref{t2tilde} holds with constant $n/8$.

\medskip

In the case of the symmetric group $S_n$, consisting of $n!$ permutation (of $n$ elements), equipped with the transposition distance (\textit{i.e.}\ two permutations are at distance 1 if one is the other composed with a transposition). Each permutation has $n(n-1)/2$ neighbors and the Markov chain that jumps uniformly at random to any neighbor is reversible with respect to the uniform measure $\mu \equiv 1/n!$. Gross' Inequality is known to hold with a constant of order $n^3 \log n$ \cite{lee-yau}, while the classical modified log-Sobolev inequality
\eqref{usual-logsob} holds with constant $C \leq n(n-1)^2/2$ \cite{BobkovTetali}. Therefore,
 by Proposition \ref{toto} (again note that $L=1$), $\mu$ satisfies the modified log-Sobolev inequality \eqref{t2tilde} with constant $n(n-1)^2$ and in turn, thanks to Corollary \ref{cormain}, the transport-entropy inequality \eqref{t2tilde} with constant $n(n-1)^2/2$.

\subsubsection*{Poincar\'e inequality}

The next proposition extends a well-known result that asserts that the Poincar\'e inequality holds on bounded domains. We will then give examples of measures satisfying the Poincar\'e inequality \eqref{poincare} but not the
one with the usual gradient.

\begin{Pro}\label{propoincarebornee}
Assume that the support of the probability measure $\mu$ has a finite diameter  and let $D=\sup_{x,y \in \mathrm{Supp}(\mu)} \{d(x,y)\}$. Then $\mu$ satisfies the Poincar\'e Inequality \eqref{poincare} with constant at most $D^2/2$.
\end{Pro}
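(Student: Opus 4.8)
The plan is to run the classical variance‑versus‑energy comparison, using the identity
$$
\var_\mu(f)=\frac12\iint_{X\times X}\bigl(f(x)-f(y)\bigr)^2\,d\mu(x)\,d\mu(y),
$$
together with the pointwise control that the very definition of $\widetilde\nabla$ provides. First, a routine truncation reduces \eqref{poincare} to bounded $f$ (which is the stated hypothesis anyway), and since both sides of \eqref{poincare} are unchanged when a constant is added to $f$, one may normalise $f$ freely. Only the values of $f$ on $\mathrm{Supp}(\mu)$, and the gradient evaluated there, enter the two sides; and for $x,y\in\mathrm{Supp}(\mu)$ one has $d(x,y)\le D$ by definition of $D$.

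The key estimate is read off directly from the definition of the gradient: for all $x,y\in X$ it holds $[f(y)-f(x)]_-\le d(x,y)\,|\widetilde\nabla f|(x)$, i.e. $f(x)-f(y)\le d(x,y)\,|\widetilde\nabla f|(x)$ whenever $f(x)\ge f(y)$. Restricting to $x,y\in\mathrm{Supp}(\mu)$ and using $d(x,y)\le D$ gives $0\le f(x)-f(y)\le D\,|\widetilde\nabla f|(x)$, hence $\bigl(f(x)-f(y)\bigr)^2\le D^2\,|\widetilde\nabla f|^2(x)$ on the region $\{f(x)\ge f(y)\}$. I would then insert this bound into the double‑integral formula for the variance: by symmetry of the integrand the integral reduces to the region $\{f(x)\ge f(y)\}$, on which the estimate is applied at the endpoint $x$ carrying the larger value of $f$; carrying the factor $\tfrac12$ of the variance identity through the bookkeeping, this produces $\var_\mu(f)\le \tfrac{D^2}{2}\int|\widetilde\nabla f|^2\,d\mu$, which is exactly the asserted inequality with $C=D^2/2$.

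The point requiring care — and what I expect to be the real content of the argument — is the \emph{one‑sided} nature of $\widetilde\nabla$: $|\widetilde\nabla f|(x)$ only controls how fast $f$ may \emph{decrease} as one moves away from $x$, so for a pair $\{x,y\}$ the estimate $\bigl(f(x)-f(y)\bigr)^2\le D^2|\widetilde\nabla f|^2(\cdot)$ is available only at the endpoint where $f$ is the larger, and the decomposition of the double integral must be organised accordingly; this is precisely what fixes the constant and is consistent with the two‑point space, where $D^2/2$ is the sharp constant. A secondary, purely technical nuisance is that a bounded $f$ need not attain its extremal values on $\mathrm{Supp}(\mu)$; but the double‑integral route sidesteps this entirely, since it invokes only the inequality $d(x,y)\le D$ for $x,y$ in the support and never the existence of a minimiser or maximiser.
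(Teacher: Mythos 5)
Your plan is the same as the paper's (write $\var_\mu(f)=\frac12\iint_{\mathrm{Supp}(\mu)^2}(f(x)-f(y))^2\,\mu(dx)\mu(dy)$ and feed in the pointwise bound $f(x)-f(y)\le d(x,y)\,|\widetilde\nabla f|(x)\le D\,|\widetilde\nabla f|(x)$), and your remark that the one\nobreakdash-sidedness of $\widetilde\nabla$ is the delicate point is exactly right; but the final bookkeeping does not close. Once you invoke the symmetry of the integrand to reduce to the region $\{f(x)\ge f(y)\}$, the factor $\tfrac12$ is already spent: $\var_\mu(f)=\frac12\iint (f(x)-f(y))^2\,d\mu\,d\mu=\iint_{\{f(x)>f(y)\}}(f(x)-f(y))^2\,d\mu\,d\mu$. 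Applying your estimate at the endpoint carrying the larger value and integrating out $y$ then yields $\var_\mu(f)\le D^2\int |\widetilde\nabla f|^2(x)\,\mu(\{f<f(x)\})\,\mu(dx)\le D^2\int |\widetilde\nabla f|^2\,d\mu$, i.e.\ the constant $D^2$, not $D^2/2$. To keep the $\tfrac12$ you would need $\mu(\{f<f(x)\})\le\tfrac12$ at the points where $|\widetilde\nabla f|$ is supported, and nothing gives you that.

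Moreover this is not a repairable slip in the bookkeeping: the inequality with constant $D^2/2$ is false in this generality. Take $X=\{a,b,c\}$ with all mutual distances equal to $1$, $\mu$ uniform and $f=\mathbf{1}_{\{a\}}$. Then $D=1$, $\var_\mu(f)=2/9$, while $|\widetilde\nabla f|(a)=1$ and $|\widetilde\nabla f|(b)=|\widetilde\nabla f|(c)=0$, so $\frac{D^2}{2}\int |\widetilde\nabla f|^2\,d\mu=\frac16<\frac29$. (The two-point space with uniform measure is misleading as a sharpness check: there $\mu(\{f<f(x)\})=\tfrac12$ at the relevant point, which is precisely why $D^2/2$ works in that special case.) The paper's own one-line proof suffers from the same defect — it squares the one-sided inequality $f(x)-f(y)\le D|\widetilde\nabla f|(x)$, which is illegitimate on the half of $\mathrm{Supp}(\mu)^2$ where $f(x)<f(y)$ — so the proposition should be read with constant $D^2$; that is what your route (and the paper's) actually proves, and it is optimal here: on $\{a,b\}$ with $d(a,b)=D$, $\mu(a)=\epsilon$ and $f=\mathbf{1}_{\{a\}}$, the best constant is $(1-\epsilon)D^2\to D^2$. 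The weaker constant is harmless for the paper's subsequent use (the example with $\mu=\frac12\delta_0+\frac12\delta_1$ only needs some finite constant).
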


\begin{proof}
For all $x,y\in \mathrm{Supp}(\mu)$, $f(x)-f(y)\leq d(x,y) |\widetilde \nabla f|(x)\leq D |\widetilde \nabla f|(x)$. Thus, for all continuous function $f$ on $X$, it holds
\begin{align*}
\var_\mu(f)
&=
\frac{1}{2} \iint_{\mathrm{Supp}(\mu)^2} \left(f(x)-f(y)\right)^2\mu(dx) \mu(dy)
\leq \frac{D^2}{2} \int  |\widetilde \nabla f|^2 \,d\mu.
\end{align*}
\end{proof}



Now, on $X=\RR$ consider the following probability measure  $\mu=\frac{1}{2}\delta_0+\frac{1}{2}\delta_1$.
We claim that $\mu$ satisfies the Poincar\'e inequality \eqref{poincare}, but not the (classical) Poincar\'e inequality with the Euclidean gradient.

Indeed, Proposition \ref{propoincarebornee} applies and leads to the Poincar\'e inequality \eqref{poincare}
with constant at most $1/2$. On the other hand,
the mapping  $f \colon \RR \ni x  \mapsto 2x^3-3x^2+1$ satisfies $f(0)=1$, $f(1)=0$ and $f'(0)=f'(1)=0$ so that
$\var_\mu (f)=\frac{1}{4}\left(f(0)-f(1)\right)^2=\frac{1}{4}$ and $\int  f'^2\,d\mu=0$ which proves the claim.

Let us prove now that $\mu$ also satisfies the modified log-Sobolev inequality \eqref{eqMLS-(2/c)intro1}.
Given $f \colon \RR \to \RR$ with $f(0) \geq f(1)$ (the other direction is similar), we have
$f(0)-f(1) \leq |\widetilde \nabla f|(0)$ so that
$\left(f(0)-f(1)\right)^2e^{f(0)} \leq \int_\RR |\widetilde \nabla f|^2 e^f\,d\mu$.  Thus,
to prove that the modified log-Sobolev inequality \eqref{eqMLS-(2/c)intro1} holds, it is enough to prove the existence of a constant $C$ such that
$$
\ent_\mu(f)\leq C\left(f(0)-f(1)\right)^2e^{f(0)}
$$
or equivalently
$$
f(0)e^{f(0)}+f(1)e^{f(1)}-\left(e^{f(0)}+e^{f(1)}\right)\log\left(\!\frac{e^{f(0)}+e^{f(1)}}{2}\!\right)
\leq
2C\left(f(0)-f(1)\right)^2e^{f(0)}.
$$
Setting $u:=f(0)-f(1) \geq 0$, the latter is equivalent to prove that
$$
ue^u-(e^u+1)\log\left(\frac{e^u+1}{2}\right)\leq Cu^2e^u \qquad \forall u \geq 0
$$
which is an easy exercise.


%

\section*{Appendix}

In this appendix we prove Theorem \ref{poincarelog}. The proof essentially follows \cite{BL}. However, many points in the original proof of Bobkov and Ledoux need to be adjusted, for technical reasons coming from the gradient $\widetilde \nabla$.

The proof relies on the following three propositions.

\begin{Pro}\label{proef}
If $\mu$ satisfies the Poincar\'e inequality \eqref{poincare} with constant $C>0$, then for all $f:X\mapsto \RR$,
$$\var_{\mu}(fe^{f/2})\leq C\int |\widetilde \nabla f|^2\left(1+e^4+f+\frac{f^2}{4}\right)e^f\,d\mu.$$
\end{Pro}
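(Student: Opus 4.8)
The plan is to apply the Poincaré inequality \eqref{poincare} to $g:=fe^{f/2}$, but only after splitting $g$ into the sum of a \emph{non-decreasing} and a \emph{non-increasing} function of $f$, so that the chain rule of Proposition~\ref{propnabla} applies to each piece while keeping $|\widetilde\nabla f|$ (and not $|\widetilde\nabla(-f)|$) on the right-hand side. Write $g=G\circ f$ with $G(t):=te^{t/2}$, so that $G'(t)=(1+t/2)e^{t/2}$ and $G''(t)=\tfrac14(4+t)e^{t/2}$; hence $G$ is non-increasing on $(-\infty,-2]$ and non-decreasing on $[-2,\infty)$, with global minimum $G(-2)=-2/e$, and $G$ is convex on $[-4,\infty)$. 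Note also that the target weight factors as $1+e^4+t+\tfrac{t^2}{4}=e^4+(1+t/2)^2\geq 0$. The obstruction to a direct use of Proposition~\ref{propnabla} is precisely the downward ``dip'' of $G$ on $(-\infty,-2]$.

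To remove it, set $\tilde f:=\max(f,-2)$ and decompose $g=g_1+g_2$ with
\[
g_1:=G(\tilde f),\qquad g_2:=R\circ f,\quad\text{where } R(t):=te^{t/2}+\tfrac{2}{e}\ \text{ for } t\leq -2,\ \ R(t):=0\ \text{ for } t\geq -2 .
\]
One checks at once that $g_1+g_2=g$ (both equal $G(f)$ on $\{f\geq-2\}$, while on $\{f<-2\}$ one has $g_1=-2/e$ and $g_2=G(f)+2/e$), that $R$ is of class $\mathcal{C}^1$, non-increasing, takes values in $[0,2/e)$, and that $|R'|\leq e^{-2}$, the maximum being attained at $t=-4$. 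In particular $g_1=\phi\circ f$ with $\phi:=G(\max(\cdot\,,-2))$ non-decreasing, and $g_2=R\circ f$ with $R$ non-increasing.

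Since two monotone functions of the same random variable with opposite monotonicities are negatively correlated, $\mathrm{Cov}_\mu(g_1,g_2)\leq 0$, and therefore
\[
\var_\mu(g)=\var_\mu(g_1)+\var_\mu(g_2)+2\,\mathrm{Cov}_\mu(g_1,g_2)\leq \var_\mu(g_1)+\var_\mu(g_2).
\]
I would bound $\var_\mu(g_1)$ by the Poincaré inequality applied to $g_1$, and $\var_\mu(g_2)=\var_\mu(-g_2)$ by the Poincaré inequality applied to $-g_2=(-R)\circ f$ — the crucial point being that $-R$ is non-decreasing. Proposition~\ref{propnabla}(i) then gives $|\widetilde\nabla g_1|(x)\leq |\widetilde\nabla \tilde f|(x)\,|\widetilde\nabla G|(\tilde f(x))$ and $|\widetilde\nabla(-g_2)|(x)\leq |\widetilde\nabla f|(x)\,|\widetilde\nabla(-R)|(f(x))$, and both now involve only $|\widetilde\nabla f|$, after using $|\widetilde\nabla\tilde f|\leq |\widetilde\nabla f|$ (truncating a function from below does not increase its downward slopes).

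It then remains to establish two one-variable estimates: $|\widetilde\nabla G|(u)=G'(u)$ for $u\geq-2$ (immediate from the convexity of $G$ on $[-2,\infty)$ together with $G'(-2)=0$, the only subtlety being to also control secant slopes to points $v<-2$, which is done using $G(v)>G(-2)$ and again convexity of $G$ on $[-2,u]$), and $|\widetilde\nabla(-R)|^2(u)\leq \big(e^4+(1+u/2)^2\big)e^u$ for all $u$. For the latter: when $u\leq-4$, concavity of $R$ on $(-\infty,-4)$ forces $|\widetilde\nabla(-R)|(u)=-R'(u)$, so the left-hand side equals $(1+u/2)^2e^u$ and the term $e^4e^u$ is pure slack; when $u\geq-4$ one simply uses $|\widetilde\nabla(-R)|(u)\leq e^{-2}$ together with $e^4e^u\geq e^4e^{-4}=1\geq e^{-4}$. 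Combining, $|\widetilde\nabla g_1|^2(x)=0$ on $\{f<-2\}$ and $\leq(1+f/2)^2e^f|\widetilde\nabla f|^2(x)$ on $\{f\geq-2\}$, whereas $|\widetilde\nabla(-g_2)|^2(x)\leq e^{-4}|\widetilde\nabla f|^2(x)$ on $\{f\geq-2\}$ and $\leq\big(e^4+(1+f/2)^2\big)e^f|\widetilde\nabla f|^2(x)$ on $\{f<-2\}$; in every case the sum is at most $\big(1+e^4+f+\tfrac{f^2}{4}\big)e^f|\widetilde\nabla f|^2(x)$ (on $\{f\geq-2\}$ because $e^4e^f\geq e^2>e^{-4}$ there). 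Hence $\var_\mu(g)\leq C\int\big(|\widetilde\nabla g_1|^2+|\widetilde\nabla(-g_2)|^2\big)\,d\mu\leq C\int\big(1+e^4+f+\tfrac{f^2}{4}\big)e^f|\widetilde\nabla f|^2\,d\mu$, which is the claim. The whole difficulty is concentrated in the non-monotone part $g_2$: the trick is to apply Poincaré to $-g_2$, and the role of the otherwise mysterious constant $e^4$ in the weight is precisely to absorb the crude uniform bound $|\widetilde\nabla(-R)|^2\leq e^{-4}$ in the range $\{f\geq-4\}$, where the exact bound $(1+f/2)^2e^f$ degenerates.
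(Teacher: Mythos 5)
Your proof is correct, but it takes a genuinely different route from the paper. The paper handles the non-monotonicity of $G(u)=ue^{u/2}$ by a reflection trick: it replaces $G$ by the increasing function $H$ obtained by flipping the decreasing branch of $G$ about the level $G(-2)=-2e^{-1}$, notes that $|H+2e^{-1}|=G+2e^{-1}$ so that $\var_\mu(fe^{f/2})\leq\var_\mu(H\circ f)$ (via $\var_\mu(|g|)\leq\var_\mu(g)$), applies Poincar\'e and the chain rule of Proposition~\ref{propnabla} once to $H\circ f$, and then bounds $|\widetilde\nabla H|(u)\leq\sup_{t<u}H'(t)$ by a three-case analysis, the $e^4$ absorbing the crude bound $\sup_{t<u}H'(t)\leq 1\leq e^2e^{u/2}$ on $[-4,0]$. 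You instead split $G$ into a non-decreasing part $G(\max(\cdot,-2))$ and a non-increasing remainder $R$, invoke the one-dimensional Chebyshev/FKG association inequality to discard the covariance, and apply Poincar\'e twice (to $g_1$ and to $-g_2$), with the $e^4$ absorbing the uniform Lipschitz bound $|\widetilde\nabla(-R)|\leq e^{-2}$ on $\{f\geq-4\}$; all the individual steps check out (the identity $g_1+g_2=G\circ f$, the $\mathcal{C}^1$ matching of $R$ at $-2$, $|\widetilde\nabla\widetilde f|\leq|\widetilde\nabla f|$ for the lower truncation, the slope computations for $u\geq-2$ and $u\leq-4$, and the final case distinction using $(1+f/2)^2=1+f+f^2/4$ and $e^4e^f\geq e^2>e^{-4}$ on $\{f\geq-2\}$). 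What each approach buys: the paper's argument is shorter (one application of Poincar\'e, no association inequality) but rests on the slightly less transparent estimate $|\widetilde\nabla H|(u)\leq\sup_{t<u}H'(t)$ and the reflection identity; yours needs the extra (standard) negative-correlation ingredient and a two-term bookkeeping, but each piece is monotone with explicit convexity/concavity, which makes the one-variable slope bounds and the role of the constant $e^4$ in the weight more transparent. Both arrive at exactly the same weight $\bigl(e^4+(1+f/2)^2\bigr)e^f$ and constant $C$; the only caveats (boundedness of $g_1$ when applying \eqref{poincare}, integrability in the covariance step) are shared with the paper's own proof, which likewise applies \eqref{poincare} to $H\circ f$.
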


\begin{Pro}\label{propBL1}
If $\mu$ satisfies the Poincar\'e inequality \eqref{poincare} with constant $C>0$, then for any bounded $c$-Lipschitz function $f$ on $X$ with $c<2/\sqrt{C}$ and $\int  f \,d\mu=0$,
$$\int  f^2e^f \,d\mu \leq C \left(\frac{2+2e^2+c\sqrt{C}}{2-c\sqrt{C}}\right)^2\int  |\widetilde{\nabla}f|^2e^f\,d\mu.$$
\end{Pro}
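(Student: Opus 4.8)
The plan is to follow the argument of Bobkov and Ledoux \cite{BL}, adapted to the gradient $\widetilde{\nabla}$ through Proposition \ref{proef}. Write $A:=\int f^2e^f\,d\mu$, $B:=\int|\widetilde{\nabla}f|^2e^f\,d\mu$ and $\gamma:=c\sqrt C\in[0,2)$. The starting point is the elementary identity
\[
A=\var_\mu\!\bigl(fe^{f/2}\bigr)+\Bigl(\int fe^{f/2}\,d\mu\Bigr)^{2},
\]
so it suffices to bound the variance term and the mean term, each by a controlled combination of $B$ and $A$, the $A$-part being small enough to be absorbed on the left at the end.

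For the variance term, Proposition \ref{proef} gives $\var_\mu(fe^{f/2})\le C\int|\widetilde{\nabla}f|^2\bigl(1+e^4+f+\tfrac{f^2}{4}\bigr)e^f\,d\mu$. I would split the bracket into $(1+e^4)+f+\tfrac{f^2}{4}$ and use throughout that $f$ is $c$-Lipschitz, hence $|\widetilde{\nabla}f|\le c$ pointwise. The constant part contributes $C(1+e^4)B$; the quadratic part is at most $\tfrac{Cc^2}{4}A$ and, since $\gamma^2<4$, is moved to the left-hand side; the linear part $C\int|\widetilde{\nabla}f|^2fe^f\,d\mu$ is controlled either by Cauchy--Schwarz, $\int|\widetilde{\nabla}f|^2|f|e^f\,d\mu\le \sqrt B\,\bigl(\int|\widetilde{\nabla}f|^2f^2e^f\,d\mu\bigr)^{1/2}\le c\sqrt{AB}$, or by the elementary pointwise bound $f\le\tfrac{f^2}{4s}+s$ ($s>0$ a free parameter to be optimised), which gives $\le Cs\,B+\tfrac{Cc^2}{4s}A$.

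For the mean term, using $\int f\,d\mu=0$ one may replace $e^{f/2}$ by $e^{f/2}-\int e^{f/2}\,d\mu$ inside the integral and apply Cauchy--Schwarz, which yields $\bigl(\int fe^{f/2}\,d\mu\bigr)^2\le \var_\mu(f)\,\var_\mu\!\bigl(e^{f/2}\bigr)$. The Poincaré inequality \eqref{poincare} then bounds $\var_\mu(f)\le C\int|\widetilde{\nabla}f|^2\,d\mu\le Cc^2$, while the chain rule of Proposition \ref{propnabla}$(i)$ applied to $G=\exp$ and the function $f/2$ (together with $|\widetilde{\nabla}\exp|(u)=e^u$) gives $|\widetilde{\nabla}e^{f/2}|\le\tfrac12|\widetilde{\nabla}f|e^{f/2}$, hence $\var_\mu(e^{f/2})\le\tfrac C4 B$ by Poincaré again. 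So the mean term is bounded by a fixed multiple of $B$.

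Feeding these estimates into the identity above and collecting the $A$-terms on the left, one obtains an inequality of the shape $\lambda A\le \kappa B+\tau\sqrt{AB}$ with $\lambda>0$ and $\kappa,\tau$ explicit functions of $C$, $c$ (and $e$); treating this as a quadratic inequality in $\sqrt{A/B}$, solving it, and making the optimal choice of the auxiliary parameter $s$ (so that $\lambda$ becomes $(1-\tfrac\gamma2)^2=\tfrac{(2-\gamma)^2}{4}$) leads to $A\le C\bigl(\tfrac{2+2e^2+\gamma}{2-\gamma}\bigr)^2B$. The obstacle I expect is not conceptual but quantitative. The gradient $\widetilde{\nabla}$ satisfies only the one-sided chain rule of Proposition \ref{propnabla} --- this is precisely why the variance of $fe^{f/2}$ must be routed through Proposition \ref{proef} rather than computed directly, and why the constant carries extra powers of $e$ compared with the continuous setting of \cite{BL}. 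One must therefore be careful to keep enough of the weight $|\widetilde{\nabla}f|^2$ on the right-hand side so that the final bound is genuinely in terms of $B$ and not of $\int e^f\,d\mu$, and the bookkeeping needed to land exactly on $C\bigl(\tfrac{2+2e^2+\gamma}{2-\gamma}\bigr)^2$ requires tracking every use of $|\widetilde{\nabla}f|\le c$ together with the sharp choice of the free parameters.
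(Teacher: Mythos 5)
Your proposal is correct and takes essentially the same route as the paper's proof: the same decomposition $\int f^2e^f\,d\mu=\var_\mu\bigl(fe^{f/2}\bigr)+\bigl(\int fe^{f/2}\,d\mu\bigr)^2$, Proposition \ref{proef} together with Cauchy--Schwarz and $|\widetilde{\nabla}f|\leq c$ for the variance term, Poincar\'e plus the chain rule $|\widetilde{\nabla}e^{f/2}|\leq\frac{1}{2}|\widetilde{\nabla}f|e^{f/2}$ for the mean term, and the final quadratic inequality in $\sqrt{A/B}$. One small caution: use the Cauchy--Schwarz bound $c\sqrt{AB}$ for the linear term (the paper's choice); your alternative $f\leq s+f^2/(4s)$ with the indicated choice of $s$ does not land on the stated constant when $c\sqrt{C}$ is close to $2$.
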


\begin{Pro}\label{propBL}
If $\mu$ satisfies the Poincar\'e inequality \eqref{poincare} with constant $C>0$, then for any bounded function $f$ on $X$ with $\|f\|_{\mathrm{Lip}}\leq c$ and $\int  f\,d\mu=0$, we have
$$\int f^2\,d\mu \leq e^{c\sqrt{5C}}\int f^2e^{-|f|}\,d\mu.$$
\end{Pro}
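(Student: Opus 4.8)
To prove Proposition~\ref{propBL}, the plan is to reduce the statement, via Jensen's inequality, to a third-moment estimate for $f$, and then to obtain that estimate by applying the Poincar\'e inequality \eqref{poincare} to $f^{2}$. If $\int f^{2}\,d\mu=0$ then $f=0$ $\mu$-a.e.\ and there is nothing to prove, so assume $\int f^{2}\,d\mu>0$ and introduce the probability measure $d\nu:=f^{2}\,d\mu\big/\int f^{2}\,d\mu$. Then $\int f^{2}e^{-|f|}\,d\mu\big/\int f^{2}\,d\mu=\int e^{-|f|}\,d\nu\geq e^{-\int|f|\,d\nu}$ by convexity of $t\mapsto e^{-t}$, so it is enough to show that $\int|f|\,d\nu=\int|f|^{3}\,d\mu\big/\int f^{2}\,d\mu\leq c\sqrt{5C}$, i.e.\ the moment bound $\int|f|^{3}\,d\mu\leq c\sqrt{5C}\int f^{2}\,d\mu$.

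To prove this bound, first apply the Cauchy--Schwarz inequality, $\int|f|^{3}\,d\mu\leq\big(\int f^{2}\,d\mu\big)^{1/2}\big(\int f^{4}\,d\mu\big)^{1/2}$, which reduces the task to checking that $\int f^{4}\,d\mu\leq 5Cc^{2}\int f^{2}\,d\mu$. The key point --- and the only step that is not bookkeeping --- is a chain-rule-type estimate for the gradient of $f^{2}$: for any $x$ and any $y$ with $f^{2}(y)<f^{2}(x)$ one has $|f(x)|>|f(y)|$, hence $f^{2}(x)-f^{2}(y)=(|f(x)|-|f(y)|)(|f(x)|+|f(y)|)\leq c\,d(x,y)\cdot 2|f(x)|$, using $\big||f(x)|-|f(y)|\big|\leq|f(x)-f(y)|\leq c\,d(x,y)$; therefore $|\widetilde\nabla(f^{2})|(x)\leq 2c\,|f(x)|$. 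Plugging $f^{2}$ into \eqref{poincare} then gives $\var_{\mu}(f^{2})\leq C\int|\widetilde\nabla(f^{2})|^{2}\,d\mu\leq 4Cc^{2}\int f^{2}\,d\mu$, so that $\int f^{4}\,d\mu\leq 4Cc^{2}\int f^{2}\,d\mu+\big(\int f^{2}\,d\mu\big)^{2}$.

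It then remains to control the last term, and here I would simply apply \eqref{poincare} to $f$ itself: using $\int f\,d\mu=0$ together with $|\widetilde\nabla f|\leq c$ (which holds because $\|f\|_{\mathrm{Lip}}\leq c$) yields $\int f^{2}\,d\mu=\var_{\mu}(f)\leq Cc^{2}$, whence $\big(\int f^{2}\,d\mu\big)^{2}\leq Cc^{2}\int f^{2}\,d\mu$ and $\int f^{4}\,d\mu\leq 5Cc^{2}\int f^{2}\,d\mu$, as wanted. Chaining the three steps (Poincar\'e applied to $f$ and to $f^{2}$, Cauchy--Schwarz, Jensen) closes the argument. The hard part is really just recognising that the natural function to feed into the Poincar\'e inequality is $f^{2}$, with the gradient bound $|\widetilde\nabla(f^{2})|\leq 2c|f|$; once this is in place the rest is routine, and the argument uses nothing beyond the Lipschitz bound, so it is identical in the discrete and continuous settings.
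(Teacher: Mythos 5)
Your proof is correct and follows essentially the same route as the paper: Poincar\'e applied to $f$ and to $f^{2}$ (with the gradient bound $|\widetilde\nabla(f^{2})|\leq 2c|f|$, which the paper obtains via Proposition~\ref{propnabla}), the third-moment bound $\int|f|^{3}\,d\mu\leq c\sqrt{5C}\int f^{2}\,d\mu$, and Jensen's inequality under the tilted measure $f^{2}\,d\mu/\int f^{2}\,d\mu$. Your use of Cauchy--Schwarz for the third moment is just the optimized form of the paper's inequality $2|f|^{3}\leq u f^{2}+u^{-1}f^{4}$, so the two arguments coincide.
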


We postpone the proof of the above propositions to prove Theorem \ref{poincarelog}.
\begin{proof}[Proof of Theorem \ref{poincarelog}]
Changing $f$ into $f+\text{constant}$ we may assume that $\int f\,d\mu=0$. Since
$u\log u\geq u-1$ for all $u\geq 0$, we have
$$
\ent_\mu(e^f)\leq \int (fe^f-e^f+1)\,d\mu=\int \left(\int_0^1tf^2e^{tf}\,dt\right)d\mu.
$$
Let $\phi(t):=\int f^2e^{tf}\,d\mu$, $t \in [0,1]$. By convexity, $\phi$ attains its maximum at either $t=0$ or $t=1$. By Proposition \ref{propBL}, and since $e^{-|f|}\leq e^f$, $\phi(0)\leq e^{c\sqrt{5C}}\phi(1)$. Thus, for every $t\in [0,1]$, $\phi(t)\leq e^{c\sqrt{5/C}}\phi(1)$. It follows that
$$\ent_\mu(e^f)\leq \int_0^1t\phi(t)\,dt\leq \int_0^1te^{c\sqrt{5C}}\phi(1)\,dt=\frac{1}{2}e^{c\sqrt{5C}}\int f^2e^{f}\,d\mu.$$
Together with Proposition \ref{propBL1}, Theorem \ref{poincarelog} is established.
\end{proof}

Now let us prove Propositions \ref{proef}, \ref{propBL1} and \ref{propBL}.

\begin{proof}[Proof of Proposition \ref{proef}]
Let $G:=u \mapsto ue^{u/2}$ and observe that it is decreasing on $(-\infty,-2]$, increasing on $(-2,\infty)$ and its minimum is $G(-2)=-2e^{-1}$. Now, starting from $G$, define an increasing function $H$ as $G$ when $G$ is increasing and as the symmetric of $G$ with respect to $y=G(-2)$ when $G$ is non-increasing. More precisely,
$$
H(x):=\begin{cases} -xe^{x/2}-4e^{-1} & \mbox{if } x\leq -2 \\ xe^{x/2} & \mbox{if } x>-2 . \end{cases}
$$
Observe that $|H(x)+2e^{-1}|=xe^{x/2}+2e^{-1}$, $x \in \RR$. Hence, using
that $\var_\mu(|g|)\leq \var_\mu(g)$, it holds
$$
\var_\mu (fe^{f/2})=\var_\mu(fe^{f/2}+2e^{-1})\leq \var_\mu(H\circ f+2e^{-1})=\var_\mu(H\circ f).
$$
Now applying the Poincar\'e Inequality \eqref{poincare} and Proposition \ref{propnabla} we have
\begin{equation}\label{eqH}
\var_\mu(H\circ f)
\leq
C\int  |\widetilde \nabla (H\circ f)|^2\,d\mu
\leq
C\int |\widetilde \nabla f|^2|\widetilde \nabla H|^2\left(f\right)d\mu .
\end{equation}
Since $H$ is increasing, we have
$$
0
\leq
|\widetilde \nabla H|(u)
=
\sup_{v<u}\frac{H(u)-H(v)}{u-v}
=\sup_{v<u}\left\{
\frac{1}{u-v}\int_{(v,u)}H'(t)\,dt\right\}
\leq
\sup_{t<u}H'(t).
$$
After some basic analysis, we have the following facts
\begin{itemize}
\item if $u<-4$, $\sup_{t<u}H'(t)=|(1+u/2)e^{u/2}|$ since $H'(t) =|(1+t/2)e^{t/2}|$ is increasing on $(-\infty,-4]$;
\item if $u\in [-4,0]$, $\sup_{t<u}H'(t) \leq 1\leq e^2e^{u/2}$;
\item if $u>0$, $\sup_{t<u}H'(t)=|(1+u/2)e^{u/2}|$ since $H'$ is increasing on $[0,\infty)$ and $H'(u)>H(0)=1\geq \sup_{t\leq 0}H'(t)$.
\end{itemize}
As a consequence, we have $|\widetilde \nabla H|^2(u)\leq \left((1+u/2)^2+e^4\right)e^{u}$.
Therefore
\begin{align*}
\var_\mu(H\circ f)
\leq
C\int |\widetilde \nabla f|^2|\widetilde \nabla H|^2\left(f\right)\,d\mu \leq
C\int |\widetilde \nabla f|^2\left(1+e^4+f+\frac{f^2}{4}\right)e^{f}\,d\mu.
\end{align*}
This ends the proof of the proposition.
\end{proof}

\begin{proof}[Proof of Proposition \ref{propBL1}]
Set $a^2=\int  f^2e^f\,d\mu$ and $b^2=\int  |\widetilde{\nabla}f|^2e^f\,d\mu$. By the Poincar\'e inequality \eqref{poincare}, for any two bounded functions $g$ and $h$ on $X$ with $\int  g\,d\mu=0$,
\begin{align*}
\left(\int  gh\,d\mu\right)^2
\leq
\left(\int  g^2\,d\mu\right)\left(\int  h^2\,d\mu\right)
\leq
\left(C\int |\widetilde{\nabla}g|^2\,d\mu\right)\left(C\int  |\widetilde{\nabla} h|^2\right)\,d\mu.
\end{align*}
Therefore, since $\int f\,d\mu=0$,
$$\left(\int fe^{f/2}\,d\mu\right)^2\leq C^2\left(\int |\widetilde{\nabla}f|^2\,d\mu\right)\left(\int  |\widetilde{\nabla}e^{f/2}|^2\,d\mu\right).$$

Set $G(u)=e^{u/2}$, $u \in \RR$. The convexity of $G$ guarantees that $|\widetilde \nabla G|=|G'|$. Thus by Proposition \ref{propnabla}, it holds  $|\widetilde{\nabla}e^{f/2}|^2\leq \frac{1}{4}|\widetilde{\nabla}f|^2e^{f}$.
Hence
$$
\left(\int  fe^{f/2}\,d\mu\right)^2\leq \frac{1}{4}C^2c^2b^2.
$$
On the other hand, according to Proposition \ref{proef},
\begin{align*}
\var_\mu (fe^{f/2})
& \leq
C\int |\widetilde \nabla f|^2\left(1+e^4+f+\frac{f^2}{4}\right)e^f\,d\mu \\
& \leq
C\left((1+e^4)b^2+\int |\widetilde \nabla f|^2fe^fd\mu+\frac{c^2a^2}{4}\right) .
\end{align*}
By Cauchy-Schwarz' Inequality,
$$
\int  |\widetilde{\nabla}f|^2fe^f\,d\mu \leq \left(\int  |\widetilde{\nabla}f|^2f^2e^{f}\,d\mu\right)^{1/2}\left(\int  |\widetilde{\nabla}f|^2e^{f}\,d\mu\right)^{1/2}\leq cab,$$
so that
$$ \var_\mu (fe^{f/2})\leq C\left(\left(b+\frac{ca}{2}\right)^2+e^4b^2\right).$$
Then we get that
$$a^2=\left(\int fe^{f/2}\,d\mu\right)^2+\var_\mu (fe^{f/2})\leq \frac{1}{4}C^2c^2b^2+C\left(b+\frac{ca}{2}\right)^2 +Ce^4b^2.$$
Simplifying this inequality, we end up with
$$
\frac{a}{b}\leq \sqrt{C} \left(\frac{2+2e^2+c\sqrt{C}}{2-c\sqrt{C}}\right),
$$
and the conclusion follows.
\end{proof}

\begin{proof} [Proof of Proposition \ref{propBL}]
For all $u>0$ and all $v\in \RR$, we have $2|v|\leq u+(1/u)v^2$. Hence $2|v|^3\leq uv^2+(1/u)v^4$ and therefore,
\begin{equation}\label{1eq4}
2\int |f|^3\,d\mu\leq u\int f^2\,d\mu+\frac{1}{u}\int f^4\,d\mu.
\end{equation}
By the Poincar\'e inequality  \eqref{poincare} it holds
$$
\int  f^2\,d\mu\leq C\int   |\widetilde{\nabla}f|^2\,\mu(dx)\leq c^2C,
$$
so that $\left(\int f^2\,d\mu\right)^2\leq c^2C\int f^2\,d\mu$.

On the other hand, set $G(t)= t^2$, $t \geq 0$. The convexity of $G$ guarantees that for all $t\geq 0$,
$|\widetilde \nabla G|(t)=|G'|(t)$. Hence, according to Proposition \ref{propnabla}, it holds
\begin{align*}\label{1eq3}
\var_\mu(f^2)
&=
\var_\mu(|f|^2)\leq C\int |\widetilde{\nabla}(|f|^2)|^2d\mu
\leq
4C\int f^2 |\widetilde{\nabla}|f||^2d\mu
\leq
4c^2C\int f^2d\mu
\end{align*}
where in the last inequality we used  that $|f|$ is $c$-Lipschitz.
It follows that $\int f^4\,d\mu =(\int f^2\,d\mu)^2+\var_\mu(f^2) \leq 5c^2C\int f^2\,d\mu$. Hence, from (\ref{1eq4}), we obtain that for every $u>0$,
$$2\int |f|^3\,d\mu\leq \left(u+\frac{5c^2C}{u}\right)\int f^2\,d\mu.$$
Minimizing over $u>0$, we get
\begin{equation}\label{1eq5}
\int |f|^3\,d\mu \leq c\sqrt{5C}\int f^2\,d\mu.
\end{equation}
Consider now the probability measure $\tau(dx)=f(x)^2\,\mu(dx)/(\int  f^2\,d\mu)$. By Jensen's inequality,
$$\int f^2e^{-|f|}\,d\mu=\int e^{-|f|}\,d\tau \int |f|^2\,d\mu\geq e^{-\int f\,d\tau}\int |f|^2\,d\mu.  $$
By (\ref{1eq5}) we conclude that
$$\int |f|\,d\tau=\frac{\int |f|^3\,d\mu}{\int f^2\,d\mu}\leq c\sqrt{5C},$$
from which the result follows.
\end{proof}

\section*{Acknowledgement}
I warmly thank my PhD advisers Natha\"el Gozlan and Cyril Roberto for helpful advises and remarks.

%

\end{document}